\title{Generators in formal deformations of categories}
\author{Anthony Blanc, Ludmil Katzarkov, Pranav Pandit}
\date{\today}
\theoremstyle{plain}
\newtheorem{prop}{Proposition}[section]
\newtheorem{cor}[prop]{Corollary}
\newtheorem{lem}[prop]{Lemma}
\newtheorem{theo}[prop]{Theorem}
\theoremstyle{definition}
\newtheorem{df}[prop]{Definition}
\newtheorem{rema}[prop]{Remark}
\newtheorem{ex}[prop]{Example}
\newtheorem{nota}[prop]{Notation}
\newtheorem{const}[prop]{Construction}
\newcommand{\Z}{\mathbb{Z}}
\newcommand{\s}{\mathbb{S}}
\newcommand{\M}{\mathcal{M}}
\newcommand{\A}{\mathbf{A}}
\newcommand{\B}{\mathbf{B}}
\newcommand{\deth}{\mathfrak{D}}
\newcommand{\U}{\mbb{U}}
\newcommand{\V}{\mbb{V}}
\newcommand{\mfr}{\mathfrak{m}}
\newcommand{\tfr}{\mathfrak{t}}
\newcommand{\ccal}{\mcal{C}}
\newcommand{\dal}{\mcal{D}}
\newcommand{\ccalp}{\mcal{C}'}
\newcommand{\ccalpp}{\mcal{C}''}
\newcommand{\acal}{\mcal{A}}
\newcommand{\ecal}{\mcal{E}}
\newcommand{\E}{\mathbb{E}}
\newcommand{\xcal}{\mathcal{X}}
\newcommand{\lmo}{\longrightarrow}
\newcommand{\lmos}[1]{\stackrel{#1} {\longrightarrow}}
\newcommand{\mo}{\rightarrow}
\newcommand{\mos}[1]{\stackrel{#1} {\rightarrow}}
\def\dar[#1]{\ar@<2pt>[#1]\ar@<-2pt>[#1]}
\newcommand{\mbb}[1]{\mathbb{#1}}
\newcommand{\mcal}[1]{\mathcal{#1}}
\newcommand{\mrm}[1]{\mathrm{#1}}
\newcommand{\del}{\Delta}
\newcommand{\te}{\otimes}
\newcommand{\tec}{\widehat{\otimes}}
\newcommand{\unit}{\mathbbm{1}}
\newcommand{\spec}{\mathrm{Spec}}
\newcommand{\ei}{\mbb{E}_\infty}
\newcommand{\eu}{\mbb{E}_1}
\newcommand{\eo}{\mbb{E}_1}
\newcommand{\et}{\mbb{E}_2}
\newcommand{\en}{\mbb{E}_n}
\newcommand{\mon}{\mathrm{Mon}}
\newcommand{\mongp}{\mathrm{Mon}^{\mrm{gp}}}
\newcommand{\monem}{\mathrm{Mon}_{\E_m}}
\newcommand{\monei}{\mathrm{Mon}_{\E_\oo}}
\newcommand{\monemgp}{\mathrm{Mon}^{\mrm{gp}}_{\E_m}}
\newcommand{\moneigp}{\mathrm{Mon}^{\mrm{gp}}_{\E_\oo}}
\newcommand{\alg}{\mathrm{Alg}}
\newcommand{\free}{\mrm{Free}}
\newcommand{\freea}{\mrm{Free}^{\mrm{aug}}} 
\newcommand{\oo}{\infty}
\newcommand{\hh}{\mrm{HH}}
\newcommand{\id}{\mrm{id}}
\newcommand{\op}{\mrm{op}}
\newcommand{\ev}{\mrm{ev}}
\newcommand{\ind}{\mrm{Ind}}
\newcommand{\colim}{\mrm{colim}}
\newcommand{\End}{\mrm{End}}
\newcommand{\uend}{\underline{\mathrm{End}}}
\newcommand{\map}{\mrm{Map}}
\newcommand{\umap}{\underline{\mrm{Map}}}
\newcommand{\fun}{\mrm{Fun}}
\newcommand{\funl}{\mrm{Fun}^\mrm{L}}
\newcommand{\extg}{\mrm{Ext}}
\newcommand{\modk}{\mrm{Mod}_k}
\newcommand{\dgklp}{Dg(k)}
\newcommand{\dgkc}{Dg^{c}(k)}
\newcommand{\nerdglp}{\mrm{N}^\mrm{L}_{dg}}
\newcommand{\nerdgc}{\mrm{N}^c_{dg}}
\newcommand{\catinf}{\mcal{C}\mrm{at}_\infty}
\newcommand{\catbig}{\widehat{\mcal{C}\mrm{at}}_\infty}
\newcommand{\catbigte}{\widehat{\mcal{C}\mrm{at}}^\times_\infty}
\newcommand{\prl}{\mrm{Pr}^{\mrm{L}}}
\newcommand{\prlte}{\mrm{Pr}^{\mrm{L}, \te}}
\newcommand{\prrte}{\mrm{Pr}^{\mrm{R}, \te}}
\newcommand{\prlc}{\mrm{Pr}^L_\omega}
\newcommand{\prlk}{\mrm{Pr}^{\mrm{L}}_k}
\newcommand{\prlkc}{\mrm{Pr}^{\mrm{L}}_{\omega,k}}
\newcommand{\prlkte}{\mrm{Pr}^{\mrm{L}, \te}_k}
\newcommand{\prlkp}{(\mrm{Pr}^{\mrm{L}}_k)_{\modk/}}
\newcommand{\pre}{\mcal{P}}
\newcommand{\catk}{\mrm{Pr}^{\mrm{L}}_k}
\newcommand{\rcat}{\mrm{R}\mcal{C}\mrm{at}}
\newcommand{\rcatk}{\mrm{R}\mcal{C}\mrm{at}_k}
\newcommand{\rcatkp}{\mrm{R}\mcal{C}\mrm{at}_k^*}
\newcommand{\spaces}{\mcal{S}}
\newcommand{\bspaces}{\widehat{\mcal{S}}}
\newcommand{\Sp}{\mrm{Sp}}
\newcommand{\algenk}{\mrm{Alg}_k^{(n)}}
\newcommand{\algenka}{\mrm{Alg}_{k}^{(n),\mrm{aug}}}
\newcommand{\algenkah}{\mrm{Alg}_{k}^{(n),\mrm{aug}, \heartsuit}}
\newcommand{\algenkart}{\mrm{Alg}_{k}^{(n),\mrm{art}}}
\newcommand{\algk}{\mrm{Alg}_k}
\newcommand{\algka}{\mrm{Alg}_{k}^{\mrm{aug}}}
\newcommand{\algkart}{\mrm{Alg}_{k}^{\mrm{art}}}
\newcommand{\algkartte}{\mrm{Alg}_{k}^{\mrm{art}, \te}}
\newcommand{\algetk}{\mrm{Alg}_k^{(2)}}
\newcommand{\algetka}{\mrm{Alg}_{k}^{(2),\mrm{aug}}}
\newcommand{\algetkart}{\mrm{Alg}_{k}^{(2),\mrm{art}}}
\newcommand{\calgka}{\mrm{CAlg}_k^{\mrm{aug}}}
\newcommand{\calgkah}{\mrm{CAlg}_k^{\mrm{aug}, \heartsuit}}
\newcommand{\calgkart}{\mrm{CAlg}_k^{\mrm{art}}}
\newcommand{\rmod}{\mrm{RMod}}
\newcommand{\lmod}{\mrm{LMod}}
\newcommand{\Mod}{\mrm{Mod}}
\newcommand{\catdef}{\mrm{\mathcal{C}atDef}}
\newcommand{\catdefc}{\mrm{\mathcal{C}atDef}^c}
\newcommand{\catdefa}{\mrm{\mathcal{C}atDef}^\wedge}
\newcommand{\objdef}{\mrm{ObjDef}}
\newcommand{\objdefet}{\mrm{ObjDef}^{\et}}
\newcommand{\objdefeta}{\mrm{ObjDef}^{\et, \wedge}}
\newcommand{\objdefa}{\mrm{ObjDef}^\wedge}
\newcommand{\algdef}{\mrm{AlgDef}}
\newcommand{\algdefa}{\mrm{AlgDef}^{\wedge}}
\newcommand{\simdef}{\mathcal{S}\mrm{imDef}}
\newcommand{\simdefc}{\mathcal{S}\mrm{imDef}^c}
\newcommand{\simdefa}{\mathcal{S}\mrm{imDef}^\wedge}
\newcommand{\deform}{\mrm{Deform}}
\newcommand{\indcoh}{\mrm{QCoh}^!}
\newcommand{\rindcoh}{\mrm{QCoh}^!_R}
\newcommand{\rindcohp}{\mrm{QCoh}^!_{R,\ast}}
\newcommand{\indcohte}{\mrm{QCoh}^{!, \te}}
\newcommand{\rindcohte}{\mrm{QCoh}^{!, \te}_R}
\newcommand{\rindcohtep}{\mrm{QCoh}^{!, \te}_{R,\ast}}
\newcommand{\aut}{\mrm{Aut}}
\newcommand{\auts}{\mrm{Aut}^!}
\newcommand{\kfor}{k[[ t ]]}
\newcommand{\fmp}{\mrm{Moduli}_k}
\newcommand{\fmpeo}{\mrm{Moduli}_k^{\eo}}
\newcommand{\eofmp}{\mrm{Moduli}_k^{\eo}}
\newcommand{\eofmpte}{\mrm{Moduli}_k^{\eo, \times}}
\newcommand{\fmpet}{\mrm{Moduli}^{\mbb{E}_2}_k}
\newcommand{\fmpette}{\mrm{Moduli}^{\mbb{E}_2, \times}_k}
\newcommand{\fmpen}{\mrm{Moduli}^{\en}_k}
\newcommand{\fmpa}{\mrm{Moduli}_{\A}}
\newcommand{\tone}{\mathfrak{t}^{(1)}}
\newcommand{\proxma}{\mrm{Prox}(m)_\A}
\newcommand{\prox}{\mrm{Prox}}
\newcommand{\spf}{\mrm{Spf}}
\newcommand{\du}{\mathfrak{D}^{(1)}}
\newcommand{\dt}{\mathfrak{D}^{(2)}}
\newcommand{\dn}{\mathfrak{D}^{(n)}}
\newcommand{\Aart}{\mathbf{A}^{\mrm{art}}}
\begin{document}

\maketitle

\begin{abstract}
In this paper we use the theory of formal moduli problems developed by Lurie in order to study the space of formal deformations of a $k$-linear $\oo$-category for a field $k$. Our main result states that if $\ccal$ is a $k$-linear $\oo$-category which has a compact generator whose groups of self extensions vanish for sufficiently high positive degrees, then every formal deformation of $\ccal$ has zero curvature and moreover admits a compact generator.
\end{abstract}

\tableofcontents

\newpage

\section{Introduction}

Let $k$ be a field of characteristic zero and let $B$ be an associative $k$-algebra. It is known since the work of Gerstenhaber that the formal deformations of $B$ up to isomorphism are in bijection with the set of solutions of the Maurer--Cartan equation in Hochschild cohomology up to gauge equivalence. In their seminal work \cite{kontsoibook} Kontsevich and Soibelman have pionereed the approach of studying deformations of dg-algebras and dg-categories using differential graded Lie algebras. They show that if $B$ is a connective differential graded algebra over $k$, its deformations as a dg-algebra up to quasi-isomorphism are controlled by the truncated Hochschild cohomology complex viewed as a differential graded Lie algebra.

Given a $k$-linear dg-category $\ccal$, it has been a long standing problem to give the precise relation between the dg-category deformations of $\ccal$ up to Morita equivalence and its Hochschild cohomology complex. In the case of first order infinitesimal deformations, each Hochschild $2$-cocycle corresponds to a curved $A_\oo$-deformation of $\ccal$ (see \cite{lowenhhcha}). In \cite{kellow}, B. Keller and W. Lowen gave examples of curved deformations which are not Morita equivalent to uncurved deformations. In \cite{lowvdbcurv}, W. Lowen and M. Van den Bergh study deformations of a dg-algebra $B$ up to torsion Morita equivalence and show that when $B$ is cohomologically bounded above, the set of formal deformations of $B$ up to torsion Morita equivalence is in bijection with the set of Maurer--Cartan solutions up to gauge equivalence. In order to study the whole space of deformations of a category up to Morita equivalence and not just merely its set of connected components, and also to exhibit the precise relation between this space and Hochschild cohomology, it is useful to place ourselves in the setting of derived algebraic geometry and more precisely in the language of \emph{formal moduli problems} developed by Jacob Lurie in \cite{dagX}. 
\\

\textbf{Formal moduli problems.} Deformation theory was first formalized through the notion of functors on artinian rings satisfying the Schlessinger conditions. Formal moduli problems form a derived and higher analog of these functors. We refer the reader to \cite{dagX} for an introduction to formal moduli problems and their classification. Roughly they are functors from certain artinian $\ei$-algebras\footnote{In characteristic zero, we can also think of these as functors on artinian commutative differential graded algebras.} to spaces satisfying the derived analogs of the Schlessinger conditions. The main result states that formal moduli problems are classified by differential graded Lie algebras (over a field of characteristic zero). In loc. cit. Lurie studies noncommutative analogs of formal moduli problems known as formal $\en$-moduli problems for $n\geq 1$, which are functors from artinian $\en$-algebras to spaces satisfying the derived Schlessinger conditions, where $\en$ is the $\oo$-operad of little $n$-cubes. In this context, the formal $\en$-moduli problems are classified by augmented $\en$-algebras. It is often useful to find the minimum $n$ for which a moduli problem is defined, thereby obtaining the true algebraic structure on the tangent complex (see \cite{francisen} for a discussion of these ideas).   
\\

\textbf{Formal moduli of a category after Lurie.}
Let $k$ be a field of arbitrary characteristic. In loc. cit. Lurie studies deformations of a presentable stable $k$-linear $\oo$-category\footnote{As a matter of facility for citations, from now on we study deformations of linear $\oo$-categories instead of differential graded categories. See the work of L. Cohn \cite{cohndgst} for a comparison between the two theories, recalled in §\ref{sslincat}.} $\ccal$ up to equivalence\footnote{Recall that the notion of equivalence between presentable $\oo$-categories corresponds to the notion of Morita equivalence between the corresponding small subcategories of compact objects. See §\ref{sslincat}.} by looking at the functor
$$\catdef_\ccal: \algetkart\mo \spaces$$
from the $\oo$-category of artinian $\et$-algebras to the $\oo$-category of spaces, classifying deformations of $\ccal$. That is, for an artinian $\et$-algebra $A$, the space $\catdef_\ccal(A)$ classifies pairs $(\ccal_A, u)$ with $\ccal$ a right $A$-linear $\oo$-category and $u:\ccal_A\te_A k \simeq \ccal$ an equivalence. In general, the functor $\catdef_\ccal$ does not satisfy the derived Schlessinger conditions, the moral reason for this being the existence of curved deformations (see Example \ref{exarigid}). When $\ccal$ is compactly generated\footnote{An $\oo$-category is compactly generated if it is generated under filtered colimits by its subcategory of compact objects (as for example the derived $\oo$-category of quasi-coherent sheaves on a finite type scheme).}, the subfunctor $\catdefc_\ccal\subseteq \catdef_\ccal$ spanned by compactly generated deformations is better behaved and closer to satisfying the Schlessinger conditions. We denote by $\hh^*(\ccal)$ the $\et$-algebra given by the center of $\ccal$ whose homotopy groups are the Hochschild cohomology groups of $\ccal$. Lurie constructs a natural transformation 
$$\theta_\ccal : \catdefc_\ccal\mo \Psi_{\hh^*(\ccal)}=\catdefa_\ccal$$
from $\catdefc_\ccal$ to the formal moduli problem controlled by the augmented $\et$-algebra $k\oplus \hh^*(\ccal)$. He shows that under certain reasonable boundedness assumptions on $\ccal$ the map $\theta_\ccal$ is close to being a homotopy equivalence: it induces an isomorphism on $\pi_i$ for $i\geq 1$ (at any basepoint) and an injection on $\pi_0$. The formal moduli $\catdefa_\ccal$ can be regarded as a substitute for the space of curved $A_\oo$-deformations and the map $\theta_\ccal$ as the embedding from uncurved deformations to curved deformations. By the universal property of the center, for each artinian $\et$-algebra $A$, the space $\catdefa_\ccal(A)$ is given by the space of linear actions of the $\et$-algebra $\dt(A)$ on the $\oo$-category $\ccal$, where $\dt(A)$ is the $\et$-Koszul dual of $A$.

\subsection{Main results}

Our main results provide a new understanding of the behavior of the map $\theta_\ccal$ in the case of formal deformations, that is, over the adic ring $\kfor$ of formal power series. Namely, we give conditions on $\ccal$ under which the map $\theta_\ccal$ induces a homotopy equivalence on formal deformations, therefore providing the missing step to relate deformations of categories to Hochschild cohomology. It can also be viewed as a solution to the curvature problem of \cite{lowvdbcurv}, where similar computations are performed.

Let $k$ be a field and let $\ccal$ be compactly generated $k$-linear $\oo$-category. The space of compactly generated deformations of $\ccal$ is the space 
$$\catdefc_\ccal(\kfor)\simeq \lim_i \catdefc_\ccal(k[t]/t^i).$$
That is, a point in this space is essentially given by a formal family $\{\ccal_i\}_{i\geq 1}$ where each $\ccal_i$ is a compactly generated deformation of $\ccal$ over $k[t]/t^i$ and $\ccal_{i+1}\te_{k[t]/t^{i+1}} k[t]/t^i \simeq \ccal_i$ is an equivalence, plus higher coherences. In this case, the space $\catdefa_\ccal(\kfor)$ is given by the space of linear actions of the $\et$-algebra $\dt(\kfor)\simeq k[u]$ on the $\oo$-category $\ccal$, where $k[u]$ is the free graded commutative algebra on a generator $u$ in cohomological degree $2$ viewed as an $\et$-algebra.

\begin{theo}\label{thmmainint} \emph{(See Theorem \ref{thmcatdeffor}).} --- 
Let $k$ be a field and let $\ccal$ be a compactly generated $k$-linear $\oo$-category. Suppose that $\ccal$ admits a single compact generator $E$ such that $\extg_\ccal^m(E,E)=0$ for $m\gg 0$. Then the natural transformation $\theta_\ccal$ induces a homotopy equivalence 
$$\catdefc_\ccal (\kfor)\simeq \catdefa_\ccal(\kfor)=  \{k[u]-\textrm{linear structures on }\ccal\}.$$  
\end{theo}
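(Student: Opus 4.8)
The plan is to transport the entire problem to the level of the generating algebra and then to combine Lurie's finite-order comparison with a convergence argument fueled by the boundedness hypothesis. Since $E$ is a compact generator, Morita theory identifies $\ccal \simeq \rmod_B$ with $B = \End_\ccal(E)$, and passage to the center is preserved, so that $\hh^*(\ccal) \simeq \hh^*(B)$ and the target $\catdefa_\ccal$ depends only on this $\et$-algebra. The hypothesis $\extg^m_\ccal(E,E) = 0$ for $m \gg 0$ translates into the statement that $B$ is cohomologically bounded above, which is exactly the boundedness condition under which Lurie's comparison applies and the one exploited by Lowen--Van den Bergh in \cite{lowvdbcurv}; so the first task is to check that our running hypothesis implies the one needed for \cite{dagX}.

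First I would record that both sides are computed as limits over the tower of truncations: $\catdefc_\ccal(\kfor) \simeq \lim_i \catdefc_\ccal(k[t]/t^i)$ by definition, and $\catdefa_\ccal(\kfor) \simeq \lim_i \catdefa_\ccal(k[t]/t^i)$ because $\catdefa_\ccal$ is a formal moduli problem. The natural transformation $\theta_\ccal$ is thus a map of towers, and by \cite{dagX} each level map induces an isomorphism on $\pi_n$ for $n \geq 1$ and an injection on $\pi_0$. Feeding this into the Milnor exact sequences for the two limits, the level isomorphisms on $\pi_{\geq 1}$ pass through the $\lim$ and $\lim^1$ terms, so that $\theta_\ccal(\kfor)$ is already an isomorphism on $\pi_n$ for $n \geq 1$; here one also needs the relevant towers of Hochschild groups to be Mittag--Leffler, which again follows from boundedness. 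What remains is surjectivity on $\pi_0$, i.e. that the inclusion $\lim_i \pi_0 \catdefc_\ccal(k[t]/t^i) \hookrightarrow \lim_i \pi_0 \catdefa_\ccal(k[t]/t^i)$ is onto.

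This surjectivity is the substance of the theorem and is precisely the assertion that every formal deformation has zero curvature and admits a compact generator. Given a compatible system of $k[u]$-linear structures, I would realize it by a genuine formal deformation, lifting the generator through the tower: starting from $E$ over $k$, one inductively produces compact objects $E_i$ in the deformation $\ccal_i$ over $k[t]/t^i$ with $E_{i+1} \te k[t]/t^i \simeq E_i$ and $E_i \te k \simeq E$. The successive obstructions and corrections live in self-extension groups of $E$, and because $\extg^m_\ccal(E,E) = 0$ for $m \gg 0$ only finitely many of them are nonzero; bounded-above-ness is exactly what makes the infinite lifting (equivalently, the twisting of the generator by the would-be curvature) converge to a genuinely compact object $\{E_i\}$ over $\kfor$. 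Each $E_i$ remains a generator since generation is detected by a condition stable under nilpotent extension, and $B_i := \End_{\ccal_i}(E_i)$ is then an honest, uncurved associative deformation of $B$ with $\ccal_i \simeq \rmod_{B_i}$. This simultaneously produces the generator and shows that the curvature vanishes, yielding the lift to $\catdefc_\ccal(\kfor)$ and the desired surjectivity.

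I expect the main obstacle to be exactly this last step: controlling the obstruction theory for the generator and proving that the lifting process converges. At finite order there genuinely are curved deformations outside the image of $\theta_\ccal$ (as in Keller--Lowen \cite{kellow}), so the argument must use the formal limit together with boundedness in an essential way, rather than proceeding by an isomorphism at each finite stage. Exhibiting the twisted generator as a genuinely compact object with uncurved endomorphism algebra is where the hypothesis on vanishing of high self-extensions is indispensable, and it is the technical heart on which the comparison with $\{k[u]\text{-linear structures on }\ccal\}$ rests.
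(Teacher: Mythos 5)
Your reduction of the theorem to surjectivity of $\theta_\ccal^t$ on $\pi_0$ is correct and is essentially the paper's first step: since $\ccal$ is tamely compactly generated, $\theta_\ccal$ is $(-1)$-truncated at every artinian stage by \cite[Prop 5.3.21]{dagX}, and $(-1)$-truncatedness passes directly to mapping spaces out of $\spf(\kfor)$; your Milnor-sequence argument reaches the same conclusion, though the Mittag--Leffler condition you invoke is neither needed nor clearly available. The genuine gap is in the surjectivity step, which you rightly identify as the heart of the matter but for which your proposed mechanism cannot work. It is circular: you propose to ``inductively produce compact objects $E_i$ in the deformation $\ccal_i$ over $k[t]/t^i$'', but a point $\alpha\in\catdefa_\ccal(\kfor)$ does not come equipped with categories $\ccal_i$ --- the existence of honest, uncurved deformations realizing $\alpha$ is exactly what surjectivity asserts, and, as you note yourself, the image of $\alpha$ in $\catdefa_\ccal(k[t]/t^i)$ need not lie in the image of $\theta_\ccal$ at any finite stage, so there is no tower of $\ccal_i$'s to induct through. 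Moreover the convergence claim is a non sequitur: obstructions to deforming an object along the square-zero extensions $k[t]/t^{i+1}\mo k[t]/t^i$ all live in the \emph{fixed} group $\extg^2_\ccal(E,E)$ (with corrections in $\extg^1$), not in groups of increasing degree, so the hypothesis $\extg^m_\ccal(E,E)=0$ for $m\gg 0$ does not truncate any sequence of obstructions.

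What the paper does instead (Proposition \ref{propgencatdef}) is a one-step construction with no induction over the tower. By the Koszul duality computation $\dt(\kfor)\simeq k[u]$ (Remark \ref{remaku}), the point $\alpha$ \emph{is} a left $k[u]$-linear structure $\ccal_u$ on $\ccal$, so the entire curvature is encoded in the single map $\phi_E:E\mo E[2]$ given by the action of $u$ on the given generator; setting $E^u=\mathrm{fib}(\phi_E)$ kills it in one stroke. The Ext-boundedness hypothesis enters twice, never through convergence: first, $E^u$ is again a compact generator, because $\umap_\ccal(E^u,F)\simeq 0$ forces the two-periodicity $\umap_\ccal(E,F)\simeq \umap_\ccal(E,F)[-2]$, which for compact $F$ combined with $\extg^m_\ccal(E,F)=0$ for $m\gg 0$ gives $\umap_\ccal(E,F)\simeq 0$; second, it makes $B=\uend_\ccal(E^u)$ connective up to a shift, so that $\algdef_B$ is an honest formal $\et$-moduli problem (Proposition \ref{propalgdefnfmp}) --- associative algebra deformations of connective algebras have no curvature problem. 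Since $k[u]$ acts trivially on $E^u$ (via the cofree-module identification $E^k\simeq E^u$), the pair $(\ccal_u,E^u)$ defines a point of the simultaneous deformation problem, hence an honest formal deformation $B_t$ of $B$, and $\lmod_{B_t}$ is a compactly generated formal deformation of $\ccal$ hitting the component of $\alpha$. Your phrase ``twisting of the generator by the would-be curvature'' gestures in this direction, but without the identification $\dt(\kfor)\simeq k[u]$, the cocone construction of $E^u$, the periodicity argument for generation, and the passage to connective algebra deformations, the surjectivity --- and hence the theorem --- is not proved.
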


The idea of the proof is to exhibit another compact generator of $\ccal$ on which $k[u]$ acts trivially, which implies that it is unobstructed. For this it suffices to kill the multiplication by $u$ on the generator $E$ by forming its cocone. As a by product of the proof of Theorem \ref{thmcatdeffor} we obtain the following existence result for compact generators.

\begin{theo}\label{thmgen}\emph{(See Theorem \ref{thmgenfordef}).} --- 
In the situation of Theorem \ref{thmmainint}, let $\ccal_t=\{\ccal_i\}_i$ be any compactly generated formal deformation of $\ccal$. Then there exists a formal family of compact generators $\{E_i\}_i$ of the family $\{\ccal_i\}_i$ which satisfies $\extg_{\ccal_i}^m(E_i,E_i)=0$ for $m\gg 0$. 
\end{theo}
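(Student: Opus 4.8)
The plan is to reduce the construction of the whole family $\{E_i\}$ to a single problem over the special fibre: produce one compact generator of $\ccal$ on which the operator coming from the deformation acts trivially, and then lift that generator up the formal tower $\{\ccal_i\}$ with no obstruction. First I would record that the given compactly generated formal deformation $\{\ccal_i\}$ determines, via $\theta_\ccal$, a point of $\catdefa_\ccal(\kfor)=\{k[u]\text{-linear structures on }\ccal\}$; concretely this is an action of the $\et$-algebra $k[u]$ (with $u$ in cohomological degree $2$) on $\ccal$, so that $\ccal$ becomes tensored over $\Mod_{k[u]}$ and every object $X$ acquires a natural self-map $u_X\in\extg^2_\ccal(X,X)$. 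The deformation-theoretic input I would take from the construction underlying Theorem \ref{thmcatdeffor} is that the obstructions to lifting an object through the successive square-zero extensions $\ccal_{i+1}\to\ccal_i$ are governed by this $u$-action; in particular an object equipped with a \emph{coherent} trivialization of its $k[u]$-module structure lifts canonically and compatibly along the entire tower.

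Next I would construct the good generator. Since $\extg^m_\ccal(E,E)=0$ for $m\gg 0$, every power $u_E^{j}\in\extg^{2j}_\ccal(E,E)$ vanishes once $j$ is large, so $u$ acts nilpotently on $E$. Using that $k\simeq\cocone\bigl(u\colon k[u]\to k[u][2]\bigr)$ up to a shift — valid because $u$ is a regular element of $k[u]$ — I would set $E'$ to be the image of $E$ under the endofunctor $k\otimes_{k[u]}(-)$ of $\ccal$ attached to the $k[u]$-module $k=k[u]/u$; concretely $E'\simeq\cocone\bigl(u_E\colon E\to E[2]\bigr)$ up to a shift, exactly the cocone of multiplication by $u$. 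By construction $E'$ carries a canonical trivialization of the $k[u]$-action, so $u$ acts by zero on $E'$. I would then check that $E'$ is again a compact generator with bounded self-extensions: it is compact as a finite cocone of shifts of the compact object $E$; its self-Ext is bounded above because $\extg^*_\ccal(E,E)$ is; and it generates because the nilpotency of $u$ lets one recover $E$ from $E'$. Indeed, applying the octahedral axiom to $E\xrightarrow{u_E^{j}}E[2j]\xrightarrow{u_E}E[2j+2]$ presents $\cofib(u_E^{j+1})$ as an extension of $\cofib(u_E^{j})$ by a shift of $\cofib(u_E)\simeq E'[1]$; by induction every $\cofib(u_E^{j})$ lies in the thick subcategory generated by $E'$, and once $u_E^{N}=0$ the object $\cofib(u_E^{N})$ contains $E$ as a retract, so $E\in\langle E'\rangle$ and $E'$ is a generator.

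Finally I would lift $E'$ up the tower. Because its $k[u]$-module structure is coherently trivial, all the obstructions of the first paragraph vanish, yielding a compatible family $\{E_i\}$ with $E_1=E'$ and $E_{i+1}\otimes_{k[t]/t^{i+1}}k[t]/t^i\simeq E_i$ — heuristically the trivial lift $E'\otimes_k k[t]/t^i$. Each $E_i$ is a compact generator of $\ccal_i$: generation descends to the special fibre, and since reduction $\ccal_{i+1}\to\ccal_i$ is conservative a Nakayama argument lifts it back, while compactness is preserved throughout. For the Ext bound, $\End_{\ccal_i}(E_i)$ is a $k[t]/t^i$-module carrying a finite $t$-adic filtration whose associated graded is a finite sum of copies of $\End_\ccal(E')$ (using that $E_i$ is compact, so mapping spectra commute with the base change $-\otimes_{k[t]/t^i}k$); boundedness above is therefore inherited, giving $\extg^m_{\ccal_i}(E_i,E_i)=0$ for $m\gg 0$. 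I expect the main obstacle to be the coherence claim of the first paragraph: showing that the single trivialization of the $u$-action simultaneously kills all higher lifting obstructions along the infinite tower, rather than merely the first-order one. This is precisely the point at which the Koszul duality between $\kfor$ and $k[u]$ is used, and where the argument is genuinely intertwined with the proof of Theorem \ref{thmcatdeffor}; the generation step above is the secondary delicate point, being the only place the hypothesis $\extg^m_\ccal(E,E)=0$ for $m\gg 0$ is indispensable.
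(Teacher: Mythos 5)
Your construction of the new generator is exactly the paper's: $E^u=\mathrm{fib}(u_E\colon E\to E[2])$, compact because $\ccal^c$ is thick, with bounded self-Exts; and your octahedron/nilpotency argument for generation (using that $u_E^N=0$ for $N\gg 0$, since $u_E^j\in\extg^{2j}_\ccal(E,E)$) is correct and in fact cleaner than the paper's argument, which instead deduces from $\umap_\ccal(E,F)\simeq \umap_\ccal(E,F)[-2]$ and tame compact generation that $\umap_\ccal(E,F)\simeq 0$.

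The genuine gap is the step you yourself flag as the main obstacle: the ``deformation-theoretic input'' that lifting obstructions along the tower $\ccal_{i+1}\to\ccal_i$ are governed by the $u$-action, so that a coherent trivialization of the $k[u]$-structure on $E^u$ yields a compatible family of lifts $\{E_i\}$. This is not a result you can take as input; it is essentially the content of the theorem, and the obstruction calculus available in the paper (Remark \ref{remaobs}) only treats single square-zero extensions of the form $k\oplus k[m]$, not the stages $k[t]/t^{i+1}\to k[t]/t^i$, let alone the coherence and $\lim^1$ issues needed to produce a point of the inverse limit space. The paper never performs such a stage-by-stage lift. Instead, after trivializing the $u$-action on $E^u$ (via the cofree-module/exponential argument: $E^u\simeq E^k$ where $k=\cofib(u)$ in $\lmod_{k[u]}$, so $E^u$ carries a canonical $k$-module structure inside $\ccal_u$), it packages the pair $(\ccal_u,E^u)$ as a point of $\simdefa_{(\ccal,E^u)}(\kfor)$ (Proposition \ref{propsimdefa}) and pushes it by the endomorphism map $\ecal$ to $\algdefa_B(\kfor)$ with $B=\uend_\ccal(E^u)$. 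The device that replaces your obstruction theory is Proposition \ref{propalgdefnfmp}: since $B$ is $n_0$-connective, $\algdef_B$ is already an honest formal $\et$-moduli problem, so this point is automatically an honest formal deformation $B_t=\{B_i\}$ of the algebra $B$ --- connective algebra deformations have no curvature/lifting obstructions, and all the tower coherence is absorbed there. Then Schwede--Shipley (Remark \ref{remagendef}) makes the square involving $\M_B$ and $\theta_\ccal$ commute, and the injectivity of $\theta^t_\ccal$ on $\pi_0$ (from $(-1)$-truncatedness of $\theta_\ccal$, i.e.\ Theorem \ref{thmcatdeffor}) identifies $\{\lmod_{B_i}\}$ with $\{\ccal_i\}$. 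The formal family of compact generators is then the family of free modules $B_i\in\lmod_{B_i}\simeq\ccal_i$, whose self-Exts $\extg^m_{\ccal_i}(B_i,B_i)\simeq\pi_{-m}B_i$ vanish for $m\gg 0$ by connectivity of $B_i$; the generators are not obtained by lifting $E^u$ obstruction by obstruction, but come for free once the whole categorical deformation is recognized as a module-category deformation. To repair your proof you would need to either prove your lifting claim from scratch or route the argument through the algebra deformation functor as the paper does.
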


We emphasize that our method in the proof of Theorems \ref{thmmainint} and \ref{thmgen} relies on the fact that the $\et$-Koszul dual of the algebra $\kfor$ of formal power series (viewed as an $\et$-algebra) is given by the free graded associative (also commutative) algebra $k[u]$ with $u$ in degree $2$ (viewed as an $\et$-algebra). The argument cannot be adapted verbatim to first order infinitesimal deformations where the $\et$-algebra $\dt(k[t]/t^2)$ is given by the more complicated free $\et$-algebra $\free_{\et} (k[-2])$ on a generator in degree $2$. 

F. Petit has proven in \cite{petitdgaff} the existence of compact generators in derived categories of $DQ$-modules. We expect to give an alternative proof of this fact using Theorem \ref{thmgen}. 
\\

On a more fundamental level, we prove a new result concerning the loop functor on the $\oo$-category of formal moduli problems over a deformation context (Proposition \ref{propfmpgp}). Moreover it implies a new description of the formal moduli associated to any $m$-proximate formal moduli (Corollary \ref{corassfmp}). As a consequence, we obtain the following description of the formal moduli $\catdefa_\ccal$ in terms of $!$-group actions (which also appeared in \cite[Prop 5.3.3.4]{preygelth}).

\begin{prop} \emph{(See Corollary \ref{corsgroupact}).} --- 
 Let $k$ be a field and let $\ccal$ be a $k$-linear $\oo$-category. Then the formal $\et$-moduli problem $\catdefa_\ccal$ is given by the formula 
 $$\catdefa_\ccal(A)\simeq \{\textrm{Left actions of }\indcoh(\Omega \spf(A))^\te \textrm{ on } \ccal\}$$
 where $\indcoh(\Omega \spf(A))$ is the $\oo$-category of Ind-coherent sheaves on $\Omega\spf(A)$ endowed with the convolution monoidal structure. 
 \end{prop}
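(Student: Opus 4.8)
The plan is to reduce the statement to a purely algebraic identification of $\et$-actions and then transport that identification through $\et$-Koszul duality to the loop space, invoking Corollary~\ref{corassfmp}. I would begin from the definition $\catdefa_\ccal = \Psi_{\hh^*(\ccal)}$, the formal $\et$-moduli problem associated to the center $\hh^*(\ccal)$ of $\ccal$. By the universal property of the center recalled in the introduction, for an artinian $\et$-algebra $A$ there is an equivalence
$$\catdefa_\ccal(A) \simeq \{\text{linear actions of } \dt(A) \text{ on } \ccal\},$$
where $\dt(A)$ is the $\et$-Koszul dual of $A$ and such an action is the datum of a map of $\et$-algebras $\dt(A)\to \hh^*(\ccal)$. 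Thus everything reduces to identifying linear actions of the $\et$-algebra $\dt(A)$ with left actions of the convolution monoidal category $\indcoh(\Omega\spf(A))^\te$.

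Second, I would set up the dictionary between actions of an $\et$-algebra and actions of its monoidal category of modules. For any $\et$-algebra $B$, the presentable $k$-linear category $\Mod_B$ is monoidal under $\te_B$, and I claim that a linear action of $B$ on $\ccal$ is the same datum as a left $\Mod_B^\te$-module structure on $\ccal$ in $\prlk$. This follows from the universal property of the $\et$-center: colimit-preserving monoidal functors $\Mod_B^\te \to \funl(\ccal,\ccal)^\te$ correspond to maps of $\et$-algebras $B \to \hh^*(\ccal)$, since $\hh^*(\ccal)$ is precisely the $\et$-center of the monoidal category $\funl(\ccal,\ccal)$ of endofunctors. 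Applying this with $B = \dt(A)$ rewrites $\catdefa_\ccal(A)$ as the space of left $\Mod_{\dt(A)}^\te$-module structures on $\ccal$.

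Third, and this is the geometric heart, I would identify $\Mod_{\dt(A)}^\te$ with $\indcoh(\Omega\spf(A))^\te$ monoidally. Here I would invoke Corollary~\ref{corassfmp}, which (via the loop functor of Proposition~\ref{propfmpgp}) describes the formal moduli associated to a proximate formal moduli through the loop space $\Omega\spf(A)$; since $\catdefa_\ccal$ is the genuine formal moduli associated to the $m$-proximate moduli $\catdefc_\ccal$, this applies directly. Concretely it expresses the $\et$-Koszul dual $\dt(A)$ as the distribution algebra of the formal group $\Omega\spf(A)$ and yields an equivalence $\Mod_{\dt(A)} \simeq \indcoh(\Omega\spf(A))$ carrying $\te_{\dt(A)}$ to the convolution product coming from the group multiplication. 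Combining the three steps produces the desired chain of equivalences and hence the formula.

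The main obstacle is exactly this last monoidal identification. It is not enough to have an equivalence of underlying categories: one must match the tensor product $\te_{\dt(A)}$ with convolution and check that the resulting monoidal structure is the one induced by the group structure of $\Omega\spf(A)$, including the compatibility of the $\et$-refinement used in the second step, so that the braiding on $\Mod_{\dt(A)}^\te$ coming from the full $\et$-structure of $\dt(A)$ is the one seen by $\indcoh$. Care is also required with the artinian and completeness hypotheses, so that ind-coherent sheaves on the formal loop space $\Omega\spf(A)$ and their convolution monoidal structure are well defined; this is precisely where the loop-functor input of Corollary~\ref{corassfmp} does the real work.
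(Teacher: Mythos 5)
Your first two steps are sound, and they coincide with ingredients the paper itself takes as known: the identification $\catdefa_\ccal(A)\simeq \map_{\algetk}(\dt(A),\xi(\ccal))$ and its reformulation as the space of left $\lmod_{\dt(A)}^\te$-module structures on $\ccal$ are precisely the universal property of the center (\cite[Thm 3.5.1, Rem 5.3.11]{dagX}, recalled in \S\ref{sscatdef} and Remark \ref{remaku}). The genuine gap is in your third step. Corollary \ref{corassfmp} asserts only that for an $m$-proximate formal moduli problem $X$ one has $L_mX(A)\simeq \map_{\monemgp(\fmpa)}(\Omega^m\spf(A),\Omega^m X)$; it is a statement about mapping spaces internal to the $\oo$-category of formal moduli problems, deduced from the delooping equivalence of Proposition \ref{propfmpgp}. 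It says nothing about module categories, distribution algebras, Ind-coherent sheaves, or monoidal structures, and in particular it does not ``express $\dt(A)$ as the distribution algebra of $\Omega\spf(A)$'' nor produce an equivalence $\lmod_{\dt(A)}^\te\simeq \indcoh(\Omega\spf(A))^\te$ carrying $\te_{\dt(A)}$ to convolution. That monoidal identification (for varying $\ccal$) is essentially equivalent to the statement you are trying to prove, so as written your plan assumes at its ``geometric heart'' exactly what must be established; you correctly flag this as the main obstacle, but the resolution you point to cannot supply it.

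What actually closes this gap in the paper is a different ingredient that your proposal never invokes: the symmetric monoidal adjunction of Remark \ref{remadjobjdef}, constructed in Lemma \ref{lemindcohpush} and Proposition \ref{propadj} out of \cite[Thm 4.8.5.11]{ha} and Koszul duality for modules \cite[Prop 3.5.1]{dagX}, in which $\rindcohtep$ (Ind-coherent sheaves with \emph{pushforward} functoriality) is left adjoint to $\objdefeta$. The paper's proof then proceeds in three moves: (i) by Corollary \ref{corcatdefsactions} (i.e.\ Proposition \ref{propfmpgp} applied to the $\et$-deformation context),
$$\catdefa_\ccal(A)\simeq \map_{\mongp_{\eo}(\fmpet)}(\Omega\spf(A),\auts_\ccal);$$
(ii) the identification $\auts_\ccal=\Omega\catdefa_\ccal\simeq \objdefeta_{\id_\ccal}$ of the looped moduli with deformations of the identity functor in $\End_k(\ccal)$ (Remark \ref{remaautcequiv}); (iii) passing to algebra objects in the adjunction, which converts maps of grouplike monoids $\Omega\spf(A)\mo \objdefeta_{(\End(\ccal),\id_\ccal)}$ into monoidal functors $\rindcohte(\Omega\spf(A))\mo \End(\ccal)^\te$, i.e.\ left actions of the convolution category on $\ccal$. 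Note that this route deliberately avoids ever comparing $\indcoh(\Omega\spf(A))^\te$ with $\lmod_{\dt(A)}^\te$. If you wish to salvage your algebra-side strategy, you would need to prove that comparison as a standalone lemma — identify the augmented $\eo$-algebra of the formal $\eo$-moduli problem underlying $\Omega\spf(A)$ with the underlying $\eo$-algebra of $\dt(A)$, and match convolution (whose very definition rests on the symmetric monoidality of $\rindcohte$ established in Lemma \ref{lemindcohpush}, not on Corollary \ref{corassfmp}) with $\te_{\dt(A)}$ — and the only tools the paper offers for this are again the adjunction of Remark \ref{remadjobjdef}.
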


\subsection*{Acknowledgments} A special acknowledgment and many thanks are due to Bertrand Toën for generously sharing his ideas on the subject matter of this paper and for many enlightening discussions. We would like to thank Mohammed Abouzaid, Alexander Efimov, David Gepner, Benjamin Hennion, Julian Holstein, Maxim Kontsevich, Wendy Lowen, Tony Pantev, François Petit, Marco Robalo, Pavel Safronov, Carlos Simpson, Michel Van den Bergh and Gabriele Vezzosi for useful discussions related to the subject matter of this paper. 

The second and third named authors were partially supported by ERC Gemis grant. The second named author was supported by Simons research grant, NSF DMS 150908, ERC Gemis, DMS-1265230, DMS-1201475 OISE-1242272 PASI. Simons collaborative Grant - HMS, and partially supported by the Laboratory of Mirror Symmetry NRU HSE, RF government grant, ag. 14.641.31.000.

\subsection{General conventions}

\begin{itemize}

\item Let $\U$ be a Grothendieck universe, with $\U$ satisfying the axiom of infinity. The $\U$-small mathematical objects will be called only \emph{small}. We assume the axiom of Universes. Some arguments in this article will require to enlarge the universe $\U$, which is always possible by assuming the axiom of Universes. If $\V$ is such an enlargement in which $\U$ is small, the $\V$-small mathematical objects will be called \emph{not necessarily small}. 

\item We work within the theory of $\infty$-categories in the sense of Lurie \cite{htt}, a.k.a quasicategories. We follow the terminology and the conventions of loc.cit. regarding the theory of $\oo$-categories. We denote by $\catinf$ the $\oo$-category of small $\oo$-categories.

\item For two $\oo$-categories $\ccal$ and $\ccal'$, we denote by $\fun(\ccal, \ccal')$ the $\oo$-category of functors from $\ccal$ to $\ccal'$. 

\item If $\ccal$ is an $\oo$-category admitting a final object $\ast$. The $\oo$-category $\ccal_*$ of pointed objects in $\ccal$ is by definition the full subcategory of $\fun(\del^1, \ccal)$ consisting of morphisms $C\mo C'$ such that $C$ is a final object of $\ccal$.

\item We denote by $\spaces$ the $\infty$-category of small spaces in the sense of \cite[Def 1.2.16.1]{htt}. It is therefore defined as the simplicial nerve of the simplicial category of small Kan complexes. We denote by $\bspaces$ the $\oo$-category of not necessarily small spaces. The word \emph{space} means an object of the $\oo$-category $\spaces$. The expression \emph{not necessarily small space} means an object of $\bspaces$ for some appropriate universe $\V$. We denote by $\ast$ the final object $\del^0$ of $\spaces$.

\item We follow the terminology of \cite{ha} concerning $n$-connective spaces. Let $n\geq 0$ be an integer. A space $X$ is \emph{$n$-connected} if $\pi_i(X,x)=0$ for every $i<n$ and every vertex $x\in X$. Every space is declared to be $(-1)$-connective. 

\item We follow the terminology of \cite{ha} concerning $n$-truncated spaces. Let $n\geq -1$ be an integer. A space $X$ is \emph{$n$-truncated} if $\pi_i(X,x)=0$ for every $i>n$ and every vertex $x\in X$. By convention a space is $(-1)$-truncated if it is either empty or contractible, and is $(-2)$-truncated if it is contractible. A map of pointed spaces $X\mo Y$ is called $n$-truncated if its fiber is $n$-truncated. 

\item We denote by $\Sp$ the stable $\oo$-category of spectra (see \cite[Def 1.4.3.1]{ha}).

\item Let $n\in \Z$ be an integer. We consider the usual t-structure on the stable $\oo$-category $\Sp$. A spectrum $X$ is $n$-connective (resp. $n$-truncated) if $\pi_i X=0$ for every $i<n$ (resp. $i>n$). 

\end{itemize}

\section{Generalities on formal moduli problems}

We start by recalling from \cite[§4]{dagX} the notion of formal moduli problems defined over $\E_n$-algebras and their classification. Then we recall the setting of axiomatic deformation theory from \cite[§1]{dagX}, in which our results from the next subsection §\ref{apploop} concerning the approximation of formal moduli via loop spaces are proven.

\subsection{Reminders on formal moduli problems}

\subsubsection*{Reminders on formal $\en$-moduli problems}\label{sssenfmp}

Let $k$ be a field of arbitrary characteristic. Let $\modk$ denote the $\oo$-category of $k$-module spectra. 
Let $n\geq 0$ be an integer and let $\algenk:=\alg_{\E_n} (\modk)$ denote the $\infty$-category of $\E_n$-algebras over $k$ in the sense of \cite[Def 7.1.3.5]{ha}. Let $\algenka=(\algenk)_{/k}$ denote the $\oo$-category of augmented $\en$-algebras. We call an $\en$-algebra $A$ over $k$ \emph{artinian} if the following conditions are satisfied 
\begin{itemize}
\item $\pi_i A=0$ for $i<0$ and for $i\gg 0$. 
\item For every $i$, the $k$-vector space $\pi_i A$ is finite dimensional. 
\item Let $\mathfrak{r}$ denote the radical of the algebra $\pi_0 A$, then the natural map $(\pi_0 A)/\mathfrak{r}\mo k$ is an isomorphism. 
\end{itemize} 
We denote by $\algenkart\subseteq \algenk$ the full subcategory spanned by artinian $\E_n$-algebras. For every integer $m\geq 0$, we denote by $k\oplus k[m]$ the shifted square zero extension viewed as $\en$-algebras over $k$, which are examples of artinian $\en$-algebras. 

A functor $X:\algenkart\mo \spaces$ is called a \emph{formal $\en$-moduli problem} if the space $X(k)$ is contractible and if for every pullback diagram 
$$\xymatrix{ A \ar[r]^-{ } \ar[d]^-{ } & A_0 \ar[d]^-{ } \\ A_1  \ar[r]^-{ } & A_{01}  }$$
in $\algenkart$ such that the maps $\pi_0 A_0 \mo \pi_0 A_{01} \leftarrow \pi_0 A_1$ are surjective, the induced square 
$$\xymatrix{ X(A) \ar[r]^-{ } \ar[d]^-{ } & X(A_0) \ar[d]^-{ } \\ X(A_1)  \ar[r]^-{ } & X(A_{01})  }$$
is a pullback diagram in $\spaces$. Let $\fun_*(\algenkart, \spaces)\subseteq \fun(\algenkart, \spaces)$ denote the full subcategory spanned by those functors $X:\algenkart\mo \spaces$ such that $X(k)$ is a contractible space. We denote by $\fmpen\subseteq \fun_*(\algenkart, \spaces)$ the full subcategory spanned by formal $\en$-moduli problems.  The class of pullback diagrams of the form
$$\xymatrix{ k\oplus k[m] \ar[r]^-{ } \ar[d]^-{ } & k \ar[d]^-{ } \\ k  \ar[r]^-{ } & k\oplus k[m+1]  }$$
for $m\geq 0$ are used to defined the tangent complex of a formal $\en$-moduli problem. Indeed if $X$ is a formal $\en$-moduli problem over $k$, the induced map $X(k\oplus k[m])\mo \Omega X(k\oplus k[m+1])$ is a homotopy equivalence, and defines a spectrum $T_X$ called the tangent complex of $X$ (see \cite[Def 1.2.5]{dagX}). By \cite[Prop 1.2.10]{dagX}, a map of formal $\en$-moduli problems $X\mo Y$ is an equivalence if and only if the induced map $T_X\mo T_Y$ is an equivalence of spectra. 

By \cite[Thm 4.0.8]{dagX} there exists a functor 
$$\Psi : \algenka\lmo \fmpen$$
which is an equivalence of $\oo$-categories. For an augmented $\en$-algebra $B$ and for an artinian $\en$-algebra $A$ over $k$, there is an equivalence $\Psi_B(A)\simeq \map_{\algenka} (\dn(A), B)$ where $\dn: (\algenka)^\op\mo \algenka$ is the Koszul duality functor defined in \cite[Prop 4.4.1]{dagX}. Moreover there exists a commutative diagram of $\infty$-categories 
$$\xymatrix{ \algenka \ar[r]^-{ \Psi} \ar[d]_-{\mfr } & \fmpen \ar[d]^-{ T[-n]} \\  \modk \ar[r]^-{ } & \Sp }$$
where $\mfr$ is given by taking the augmentation ideal.

\subsubsection*{Reminders on axiomatic deformation theory}

We recall some definitions and constructions from \cite[§1]{dagX}.

For $\A$ a presentable $\oo$-category, we denote by $\Sp(\A)$ the $\oo$-category of spectra objects of $\A$ in the sense of \cite[Def 1.4.2.8]{ha}. By \cite[Rem 1.4.2.25]{ha} there exists an equivalence of $\oo$-categories $\Sp(\A)\simeq \lim (\hdots\mo \A_*\mos{\Omega} \A_*\mos{\Omega} \A_*)$. We denote by $\Omega^{\oo} : \Sp(\A)\mo \A$ the evaluation on the $0$-sphere $S^0$ and for every integer $n\in \Z$, we denote by $\Omega^{\oo-n} : \Sp(\A)\mo \A$ the composition of $\Omega^{\oo}$ with the $n$th shift functor on $\Sp(\A)$. 

A \emph{deformation context} in the sense of \cite[Def 1.1.3]{dagX} is a pair $(\A, (E_\alpha)_{\alpha\in T})$ where $\A$ is a presentable $\infty$-category and $(E_\alpha)_{\alpha\in T}$ is a sequence of objects of the $\infty$-category $\Sp(\A)$ of spectra objects in $\A$. Given a deformation context $(\A, (E_\alpha)_{\alpha\in T})$, a map $f:A\mo A'$ in $\A$ is called \emph{elementary} if there exists an index $\alpha\in T$, an integer $n\geq 0$ and a pullback diagram
$$\xymatrix{  A \ar[r]^-{ } \ar[d]_-{f } &  \ast_\A \ar[d]^-{ } \\  A '\ar[r]^-{ } & E_\alpha[n] }$$
in $\A$, where $\ast_\A$ is a final object of $\A$. A map $f : A\mo A'$ in $\A$ is called \emph{small} if it can be written as a finite composition of elementary maps $A_0=A\mo A_1\mo \hdots \mo A_m=A'$ in $\A$. An object $A\in \A$ is called \emph{artinian} if the map $A\mo \ast_\A$ is small. We denote by $\Aart$ the full subcategory of $\A$ spanned by artinian objects.

Given a deformation context $(\A, (E_\alpha)_{\alpha\in T})$, recall that a \emph{formal moduli problem} in the sense of \cite[Def 1.1.14]{dagX} is a functor $X:\Aart\mo \spaces$ which satisfies the two conditions 
\begin{enumerate}
\item The space $X(\ast_\A)$ is contractible. 
\item For every pullback diagram
$$\xymatrix{ A \ar[r]^-{ } \ar[d]^-{ } & A_0 \ar[d] \\ A_1 \ar[r]^-{ } & A_{01} }$$
in $\Aart$ such that the map $A_0\mo A_{01}$ is small, the induced square 
$$\xymatrix{ X(A) \ar[r]^-{ } \ar[d]^-{ } & X(A_0) \ar[d] \\ X(A_1) \ar[r]^-{ } & X(A_{01}) }$$
is a pullback diagram in $\spaces$. 
\end{enumerate}

By \cite[Prop 1.1.15]{dagX} we are allowed to test condition (2) on the smaller class of pullback diagrams of the form 
$$\xymatrix{ A \ar[r]^-{ } \ar[d]^-{ } & \ast_\A \ar[d] \\ A' \ar[r]^-{ } & E_\alpha[n] }$$
in $\Aart$ where $\alpha\in T$ and $n\geq 0$. 

We denote by $\fun_*(\Aart, \spaces)\subseteq \fun(\Aart, \spaces)$ the subcategory spanned functors satisfying condition (1) above, or in other words sending $\ast_\A$ to a contractible space. We denote by $\fmpa\subseteq\fun_*(\Aart, \spaces)$ the subcategory spanned by the formal moduli problems. 

\begin{ex}\label{exdefconalgen}
Let $k$ be a field. Let $n\geq 1$ and $\A=\algenka$ be the presentable $\oo$-category of augmented $\en$-algebras over $k$. Consider the deformation context $(\A, \{k\})$ where $k$ is seen as a module over itself. The deloopings of $k$ are given by the shifted square zero extensions $k\oplus k[m]$ for $m\geq 0$. By \cite[Prop 4.5.1]{dagX}, an augmented $\en$-algebra $A$ is artinian with respect to the deformation context $(\A, \{k\})$ if and only if it is artinian in the sense of §\ref{sssenfmp}. By \cite[Prop 4.5.1]{dagX}, a morphism $A\mo A'$ in $\algenka$ is small with respect to the deformation context $(\A, \{k\})$ if and only if the induced map $\pi_0A\mo \pi_0 A'$ is surjective. Then \cite[Cor 4.5.4]{dagX} implies that $X$ is a formal moduli problem with respect to the deformation context $(\A, \{k\})$ if and only if it is a formal $\en$-moduli problem in the sense of §\ref{sssenfmp}.  
\end{ex}

\begin{nota}\label{notasf}
Let $(\A, (E_\alpha)_{\alpha\in T})$ be a deformation context. Consider the embedding $i_\A: \fmpa\hookrightarrow \fun_*(\Aart, \spaces)$ into the $\oo$-category of functors $X:\Aart \mo \spaces$ such that $X(\ast_\A)$ is a contractible space. The $\oo$-categories $\fmpa$ and $\fun_*(\Aart, \spaces)$ are presentable and by definition of formal moduli problems the functor $i_\A$ preserves all small limits. By the adjoint functor theorem \cite[Cor 5.5.2.9]{htt}, this implies that $i_\A$ admits a left adjoint denoted by 
$$\xymatrix{\fmpa \ar@{^{(}->}[r]_{i_\A}  &  \fun_*(\Aart, \spaces) \ar@/_1pc/[l]_{L_\A}}.$$

For every functor $X:\Aart\mo \spaces$ sending $\ast_\A$ to a contractible space we have the unit natural transformation $X\mo i_\A(L_\A(X))$ of this adjunction. The formal moduli problem $L_\A(X)$ is therefore initial among formal moduli problems $Y$ having a natural transformation $X\mo i_\A Y$. 
\end{nota}

Let $(\A, (E_\alpha)_{\alpha\in T})$ be a deformation context. Recall from \cite[Def 1.2.5]{dagX} that every formal moduli problem $X:\Aart\mo \spaces$ has a tangent spectrum $X(E_\alpha)\in \Sp$ of $X$ at $\alpha$. Moreover a map $X\mo Y$ of formal moduli problems is an equivalence if and only if for every $\alpha$ the map $X(E_\alpha)\mo Y(E_\alpha)$ is an equivalence of spectra. 

In \cite[Def 1.3.1 and 1.3.9]{dagX}, Lurie defines the notion of \emph{weak deformation theory} and of \emph{deformation theory}. In short a weak deformation theory for $(\A, (E_\alpha)_{\alpha\in T})$ consists of a functor $\deth : \A^{op}\mo \B$ such that $\B$ is a presentable $\oo$-category, $\deth$ has a left adjoint $\deth'$ which behaves like an inverse equivalence to $\deth$ on a certain subcategory of $\B$. In this situation, \cite[Cor 1.3.6]{dagX} asserts the existence of an $\oo$-functor 
$$\Psi : \B\mo \fmpa$$ 
which is given on objects by $\Psi_B(A)=\map_{\B} (\deth(A), B)$. By applying the tangent spectrum we obtain for every $\alpha$ a functor $e_\alpha : \B\mo \Sp$ given on objects by $e_\alpha(B)=\Psi_B(E_\alpha)$. A deformation theory for $(\A, (E_\alpha)_{\alpha\in T})$ is a weak deformation theory such that each $ e_\alpha$ preserves small sifted colimits and such that a morphism $f$ in $\B$ is an equivalence if and only if each $e_\alpha(f)$ is an equivalence of spectra. The main theorem \cite[Thm 1.3.12]{dagX} then asserts that under the existence of a deformation theory for $(\A, (E_\alpha)_{\alpha\in T})$, the functor $\Psi$ is an equivalence of $\oo$-categories.


\subsubsection*{Formal spectrum and Koszul duality}

We now set up some notations and intermediate results concerning the formal spectrum, pro-objects and formal Koszul duality.

\begin{nota}\label{notaspf}
Let $(\A, (E_\alpha)_{\alpha\in T})$ be a deformation context and let $A\in \A$ be any object. Consider the functor $\spf(A): \Aart\mo \spaces$ which is co-represented by $A$ given by the formula $\spf(A)(R)=\map_\A(A,R)$. Then $\spf(A)$ is a formal moduli problem which is called the\emph{ formal spectrum of $A$}. Moreover the construction $A\mapsto \spf(A)$ determines a functor 
$$\spf:\A^\op\lmo \fmpa$$
which is fully faithful when restricted to the subcategory $(\Aart)^\op$ spanned by artinian objects. 
\end{nota} 

\begin{ex}
Let $n\geq 1$ and consider the deformation context $(\algenka, k)$ formed by augmented $\en$-algebras over $k$. Let $\epsilon: A\mo k$ be an augmented $\en$-algebra over $k$. If we think about the $\en$-algebra $A$ as determining a noncommutative scheme $\spec(A)$, then the formal spectrum $\spf(A)$ can be thought of as the functor parametrizing deformations of the point of $\spec(A)$ determined by $\epsilon$ over artinian $\en$-algebras. 
\end{ex}

The following Lemma will be useful in the sequel and follows from the fact that the Koszul duality functor $\dn$ is a deformation theory. 

\begin{lem}\label{lemalgrepspf}
Let $n\geq 1$ be an integer and denote by $\tfr^{(n)}: \fmpen\mo \algenka$ the equivalence given by \cite[Thm 4.0.8]{dagX}. Let $B$ be an augmented $\E_n$-algebra over $k$. Then there exists a natural equivalence of augmented $\E_n$-algebras $\tfr^{(n)}_{\spf(B)}\simeq \dn(B)$ or equivalently an equivalence of formal moduli problems $\spf(B)\simeq \Psi_{\dn(B)}$. 
\end{lem}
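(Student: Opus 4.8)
The plan is to prove the equivalent assertion $\spf(B)\simeq \Psi_{\dn(B)}$ in $\fmpen$; since $\tfr^{(n)}$ is by definition an inverse to the equivalence $\Psi$, applying $\tfr^{(n)}$ to such an equivalence yields $\tfr^{(n)}_{\spf(B)}\simeq \tfr^{(n)}_{\Psi_{\dn(B)}}\simeq \dn(B)$, which is the claim. Moreover $\fmpen$ is a full subcategory of $\fun_*(\algenkart, \spaces)$, and in a functor $\oo$-category with values in $\spaces$ a natural transformation is an equivalence precisely when it is a pointwise equivalence. Hence it suffices to produce a natural equivalence $\spf(B)(A)\simeq \Psi_{\dn(B)}(A)$ for every artinian $\en$-algebra $A$, compatibly in the augmented $\en$-algebra $B$.

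First I would unwind both sides using the formula $\Psi_{B'}(A)\simeq \map_{\algenka}(\dn(A), B')$ recalled earlier in the excerpt. Taking $B'=\dn(B)$ gives $\Psi_{\dn(B)}(A)\simeq \map_{\algenka}(\dn(A), \dn(B))$, whereas by the very definition of the formal spectrum $\spf(B)(A)=\map_{\algenka}(B, A)$. The whole statement therefore reduces to exhibiting a natural equivalence
$$\map_{\algenka}(\dn(A), \dn(B))\simeq \map_{\algenka}(B, A)$$
for $A$ artinian and $B$ arbitrary.

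To establish this I would invoke two standard features of $\en$-Koszul duality, both of which are part of the verification in \cite[\S4.4]{dagX} that $\dn$ is a deformation theory: (i) the self-duality adjunction $\map_{\algenka}(X, \dn(Y))\simeq \map_{\algenka}(Y, \dn(X))$, expressing that the left adjoint of the deformation theory functor $\dn$ is again $\dn$; and (ii) the fact that the biduality map $A\mo \dn(\dn(A))$ is an equivalence whenever $A$ is artinian. Applying (i) with $X=\dn(A)$ and $Y=B$ gives $\map_{\algenka}(\dn(A), \dn(B))\simeq \map_{\algenka}(B, \dn(\dn(A)))$, and then (ii) identifies the right-hand side with $\map_{\algenka}(B, A)$. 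All of these equivalences are natural in $A$ and in $B$, so they assemble into the sought natural equivalence $\spf(B)\simeq \Psi_{\dn(B)}$.

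The main point to watch is that biduality must be applied to the artinian argument $A$ and \emph{not} to $B$: for a general augmented $\en$-algebra $B$ the map $B\mo \dn(\dn(B))$ need not be an equivalence, so it is essential that the self-duality of step (i) has transported the double dual onto the artinian factor before biduality is invoked. The only genuinely nontrivial inputs are thus (i) and (ii), which I would cite directly from the Koszul duality material of \cite[\S4.4]{dagX}; the remaining work, namely tracking the variance of $\dn$ through the chain of equivalences and checking naturality in $B$, is formal.
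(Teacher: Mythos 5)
Your reduction to a pointwise statement and your step (i) are fine: the self-adjunction $\map_{\algenka}(X,\dn(Y))\simeq \map_{\algenka}(Y,\dn(X))$ is a legitimate input, and the natural transformation you end up with (post-composition with the unit $A\mo \dn\dn(A)$) is exactly the map $\alpha$ that the paper constructs. The gap is step (ii). The biduality claim you invoke --- that $A\mo \dn\dn(A)$ is an equivalence for every \emph{artinian} $A$ --- is not available from \cite[\S 4.4]{dagX}, and it is not part of the verification that $\dn$ is a deformation theory. The biduality theorem \cite[Thm 4.4.5]{dagX} applies to augmented $\en$-algebras whose augmentation ideal is $n$-coconnective and locally finite; artinian algebras are \emph{connective}, so they sit on the opposite side of Koszul duality and that theorem says nothing about them (the deformation-theory axioms only require biduality on a subcategory $\B_0$ of coconnective algebras, together with the fact that the duals of the spheres $k\oplus k[m]$ land in $\B_0$). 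Worse, (ii) is not merely an unproved input: letting $B$ range over all augmented $\en$-algebras, the natural equivalence $\map_{\algenka}(B,A)\simeq \map_{\algenka}(B,\dn\dn(A))$ you assert is induced by the unit, so by Yoneda it is \emph{equivalent} to the statement of the lemma. As written, the proposal is circular.

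The paper's proof shows how to avoid exactly this. One constructs the same $\alpha:\spf(B)\mo \Psi_{\dn(B)}$, observes that both sides are formal moduli problems, and therefore, by \cite[Prop 1.2.10]{dagX}, it suffices to check that $\alpha$ is an equivalence on tangent spectra --- that is, only at the objects $A=k\oplus k[m]$ for $m\geq 0$, rather than at every artinian $A$. For these particular algebras biduality \emph{can} be established: by \cite[Prop 4.5.6]{dagX} there is an equivalence $k\oplus k[m]\simeq \dn(\freea_{\E_n}(k[-m-n]))$, the free algebra $\freea_{\E_n}(k[-m-n])$ is $n$-coconnective and locally finite by \cite[Prop 4.1.13]{dagX}, hence satisfies biduality by \cite[Thm 4.4.5]{dagX}, and the triangle identity for the self-adjunction then transfers biduality to its Koszul dual $k\oplus k[m]$. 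This restricted biduality, combined with the tangent-spectrum criterion, is what must replace your step (ii); if you repair your argument this way, it becomes the paper's proof.
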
 

\begin{proof} 
Consider the natural transformation $\alpha: \spf(B)\mo \Psi_{\dn(B)}$ induced by the natural map 
$$\map_{\algenka} (B,A)\lmos{\alpha_A}  \map_{\algenka} (\dn(A),\dn(B))$$
for each artinian $\E_n$-algebra $A$ over $k$. By adjunction the target space of $\alpha_A$ is naturally equivalent to the space $\map_{\algenka} (B,\dn\dn(A))$ and the map is induced by the unit map $A\mo \dn\dn(A)$. Let $m\geq 0$ be an integer and $A=k\oplus k[m]$ be the trivial square zero extension. We claim that the biduality map $A\mo \dn\dn(A)$ is an equivalence. Indeed by \cite[Prop 4.5.6]{dagX} there exists an equivalence $\dn (\freea_{\E_n} (k[-m-n]) \simeq k\oplus k[m]$ of augmented $\E_n$-algebras and by \cite[Prop 4.1.13]{dagX} the $\E_n$-algebra $\freea_{\E_n} (k[-m-n])$ is $n$-coconnective and locally finite whenever $m\geq 0$, which by \cite[Thm 4.4.5]{dagX} implies that the biduality map is an equivalence for the $\E_n$-algebra $\freea_{\E_n} (k[-m-n])$, and thus also for $A=k\oplus k[m]$. We deduce that the map $\alpha_A$ is a homotopy equivalence and therefore that $\alpha$ induces an equivalence on the tangent spectrum, and is therefore an equivalence, which proves the lemma. 
\end{proof}


\subsection{Approximation of formal moduli and loop spaces}\label{apploop}

As explained above, the deformation functor of a linear $\oo$-category over a field does not in general satisfy the derived Schlessinger conditions, but turns out to be rather close to satisfy them. Many examples of deformation functors of some algebro-geometric objects behave similarly. In \cite[§5.1]{dagX}, Lurie developed a formalism to study these functors under the name of $m$-proximate formal moduli problems for $m\geq 0$ an integer.  We will prove that the functor $L=L_\A:\fun_*(\Aart, \spaces)\mo \fmpa$ from Notation \ref{notasf} admits a special description when restricted to the subcategory spanned by $m$-proximate formal moduli problems in terms of $\E_m$-maps between certain loop $\E_m$-spaces (see Corollary \ref{corassfmp}). For this we will prove a result concerning the loop space functor on the $\oo$-category of formal moduli problem (see Proposition \ref{propfmpgp}).

Let $(\A,(E_\alpha)_{\alpha\in T})$ be a deformation context and $m\geq 0$ an integer. Recall from \cite[Def 5.1.5]{dagX} that a functor $X:\Aart\mo \spaces$ is called an \emph{$m$-proximate formal moduli problem} if the space $X(\ast_\A)$ is contractible and if for every pullback square
$$\xymatrix{ A \ar[r]^-{ } \ar[d]^-{ } & A_0 \ar[d] \\ A_1 \ar[r]^-{ } & A_{01} }$$
in $\Aart$ such that the map $A_0\mo A_{01}$ is small, the induced map
$$X(A)\mo X(A_0)\times_{X(A_{01})} X(A_1)$$
is $(m-2)$-truncated (that is, all homotopy fibers of this map are $(m-2)$-truncated). We observe that $0$-proximate formal moduli problems are exactly the formal moduli problems. 

\begin{nota}
We denote by $\proxma\subseteq \fun_*(\Aart, \spaces)$ the full subcategory spanned by $m$-proximate formal moduli problems. There is a tower of $\infty$-categories
$$\fmpa=\prox(0)_\A\subseteq \prox(1)_\A \subseteq \prox(2)_\A  \subseteq \hdots\subseteq \fun_*(\Aart, \spaces)$$
For example, we will see below that the deformation functor of an object in a linear $\infty$-category is in general 1-proximate and that the deformation functor of a fixed linear $\infty$-category itself is in general 2-proximate. 
\end{nota}

\begin{nota}
Let $n\geq 1$ and $m\geq 0$, and consider the deformation context $(\algenka, k)$ of  Example \ref{exdefconalgen}. We will denote by $\prox(m)_{\en}\subseteq \fun_*(\algenkart, \spaces)$ the full subcategory of $m$-proximate formal $\en$-moduli problem. 
\end{nota}

\begin{nota}\label{notaloopprefmp}
Let $(\A,(E_\alpha)_{\alpha\in T})$ be a deformation context and suppose that $\A$ is a pointed $\oo$-category in the sense that any final object is also initial (e.g. the $\oo$-category $\algenka$ of augmented $\en$-algebras over $k$). Then the $\oo$-category $\fun_*(\Aart, \spaces)$ is also pointed. We denote by  
$$\Omega : \fun_*(\Aart, \spaces)\lmo \fun_*(\Aart, \spaces)$$ 
the loop functor in the sense of \cite[Rem 1.1.2.9]{ha}. The embedding $\fmpa\hookrightarrow \fun_*(\Aart, \spaces)$ 
commutes with small limits and the loop functor restricts to a functor $\Omega:\fmpa\mo \fmpa$. For $m\geq 0$ an integer, we denote by $\Omega^m = \Omega\circ \hdots\circ \Omega$ ($m$ times) the iterated loop functor. 
\end{nota}

\begin{rema} 
In the situation of Notation \ref{notaloopprefmp}, let $X:\Aart\mo \spaces$ be any functor. Then the functor $\Omega X$ is given on objects by $(\Omega X)(A)=\Omega X(A)$ where the loop space is taken at the base point $\ast\mo X(A)$ corresponding to the map $\ast \simeq X(\ast_\A)\mo X(A)$ induced by the essentially unique map $\ast_\A\mo A$. 

If we imagine that the functor $X$ parametrizes deformations of an object of an $\oo$-category corresponding to the connected component $\pi_0 X(\ast_\A)=\ast$, then the functor $\Omega X$ parametrizes deformations of the identity of this object, or in other words infinitesimal autoequivalences of this object. 
\end{rema}

\begin{rema}\label{remaproxom}
Let $f:X\mo Y$ be a Kan fibration between Kan complexes. Let $m\geq 0$ be an integer and suppose that the map $f$ is $(m-2)$-truncated, that is, all its homotopy fibers are $(m-2)$-truncated. Then we deduce from the long exact sequence of homotopy groups for any choice of basepoint that the induced map $\Omega^m X\mo \Omega^m Y$ is a homotopy equivalence, and whenever $m\geq 1$ that the morphism $\pi_{m-1}(X,x_0) \mo \pi_{m-1} (Y,f(x_0))$ is injective for any basepoint $x_0\in X$. Besides, we deduce from the same long exact sequence that the induced map $\Omega X\mo \Omega Y$ is $(m-3)$-truncated. 
\end{rema}

\begin{lem}\label{lemproxfmp}
Let $(\A,(E_\alpha)_{\alpha\in T})$ be a deformation context such that $\A$ is a pointed $\oo$-category. Let $m\geq 0$ and let $X:\Aart\mo \spaces$ be an $m$-proximate formal moduli problem. Then the natural map $\Omega^m X\mo \Omega^m LX$ induced by the unit map $X\mo LX$ is an equivalence. In particular the functor $\Omega^m X$ is a formal moduli problem. 
\end{lem}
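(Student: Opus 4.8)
The plan is to reduce the statement to a comparison of tangent complexes, after first establishing that $\Omega$ decreases the order of proximateness by one. First I would check that $\Omega$ carries an $m$-proximate formal moduli problem to an $(m-1)$-proximate one. For an admissible pullback square in $\Aart$ (with $A_0\to A_{01}$ small), the comparison map $X(A)\to X(A_0)\times_{X(A_{01})}X(A_1)$ is $(m-2)$-truncated by hypothesis; since $\Omega$ commutes with pullbacks and $X(\ast_\A)\simeq\ast$ supplies the basepoints, the comparison map for $\Omega X$ is obtained by applying $\Omega$, which by the last assertion of Remark \ref{remaproxom} is $(m-3)$-truncated. Thus $\Omega X$ is $(m-1)$-proximate, and iterating shows $\Omega^m X$ is a formal moduli problem; so is $\Omega^m LX$, because $LX$ is one and $\Omega$ preserves formal moduli problems.

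Now both $\Omega^m X$ and $\Omega^m LX$ are formal moduli problems, so by \cite[Prop 1.2.10]{dagX} it suffices to show that the unit $\eta\colon X\to LX$ induces, after $\Omega^m$, an equivalence on tangent complexes at each $\alpha$; levelwise this means proving that $\Omega^m(\eta_{E_\alpha[n]})\colon\Omega^m X(E_\alpha[n])\to\Omega^m LX(E_\alpha[n])$ is an equivalence for every $n$. The $m$-proximate hypothesis, applied to the square exhibiting $E_\alpha[n]\simeq\ast_\A\times_{E_\alpha[n+1]}\ast_\A$ (one of the squares of \cite[Prop 1.1.15]{dagX}), says precisely that each structure map $\sigma_n\colon X(E_\alpha[n])\to\Omega X(E_\alpha[n+1])$ of the tangent prespectrum of $X$ is $(m-2)$-truncated, so by the first assertion of Remark \ref{remaproxom} every $\Omega^m\sigma_n$ is an equivalence.

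The one substantial input I need is that the unit identifies $T_{LX}$ with the spectrification of this prespectrum, i.e. that $LX(E_\alpha[n])\simeq\colim_k\Omega^k X(E_\alpha[n+k])$ compatibly with $\eta$. Granting this, the conclusion is formal: the transition maps of the diagram $k\mapsto\Omega^{m+k}X(E_\alpha[n+k])$ are the further loopings $\Omega^k(\Omega^m\sigma_{n+k})$ of equivalences, hence equivalences, so the colimit of this diagram coincides with its initial term $\Omega^m X(E_\alpha[n])$; and since $S^1$ is compact, $\Omega^m$ commutes with the filtered colimit, giving $\Omega^m LX(E_\alpha[n])\simeq\colim_k\Omega^{m+k}X(E_\alpha[n+k])$ with $\Omega^m(\eta_{E_\alpha[n]})$ realized as the colimit inclusion of the initial term, which is therefore an equivalence. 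The closing assertion that $\Omega^m X$ is a formal moduli problem then follows, since it is equivalent to $\Omega^m LX$.

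The main obstacle is exactly this input, namely that $L$ acts on tangent data by spectrification. I would prove it by showing that the functor sending $X$ to the spectrification of $\{X(E_\alpha[n])\}_n$ inverts $L$-equivalences: it preserves colimits (evaluation at $E_\alpha[n]$ is pointwise and spectrification is a left adjoint), so it is enough to invert the generating localizing maps $\spf(\ast_\A)\sqcup_{\spf(E_\beta[j])}\spf(A')\to\spf(A)$ that present the reflection of \cite[Prop 1.1.15]{dagX}. This in turn reduces to an excision property, after stabilization, of the co-represented functors $\map_\A(-,E_\alpha[n])$ — the linearity of the deloopings $E_\alpha[n]$ encoded in the deformation context. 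Equivalently, the whole argument can be organized as an induction on $m$ whose inductive step is the natural equivalence $L\Omega\simeq\Omega L$, and this commutation reduces, again via \cite[Prop 1.2.10]{dagX}, to the same input; I expect verifying it to be the only nonformal part of the proof, the subtlety being that $L$ is not left exact, so the interaction of $L$ with the limit $\Omega$ must be checked rather than taken for granted.
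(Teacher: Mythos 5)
Your preparatory steps are fine: the observation that $\Omega$ lowers proximateness (so that $\Omega^m X$ and $\Omega^m LX$ are formal moduli problems), the reduction via \cite[Prop 1.2.10]{dagX} to tangent data, and the fact that the structure maps $\sigma_n\colon X(E_\alpha[n])\mo\Omega X(E_\alpha[n+1])$ are $(m-2)$-truncated are all correct. But the entire content of the lemma has been deferred into your ``substantial input,'' and the strategy you propose for proving it cannot work. The functor $\Phi$ sending $X$ to the spectrification of $\{X(E_\alpha[n])\}_n$ preserves colimits (as your saturation argument requires), whereas $X\mapsto T_{LX}$ does not: under the classification $\fmpa\simeq\B$ of \cite[Thm 1.3.12]{dagX} the tangent complex corresponds to the augmentation-ideal functor, which preserves limits and sifted colimits but \emph{not} pushouts (coproducts in $\B$ are free products, not square-zero sums). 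So $\Phi$ cannot compute $T\circ L$, and in particular $\Phi$ does not invert the generating localizing maps. Concretely, take the deformation context $(\algka,\{k\})$ of augmented $\E_1$-algebras and the elementary square $k\oplus k[1]\simeq \ast_\A\times_{k\oplus k[2]}\ast_\A$ (so $A'=\ast_\A=k$ and $E_\beta[j]=k\oplus k[2]$), with generating map
$$s\colon \spf(k)\sqcup_{\spf(k\oplus k[2])}\spf(k)\lmo\spf(k\oplus k[1]).$$
Each $\spf(B)$ is already a formal moduli problem, so $\Phi(\spf(B))\simeq T_{\spf(B)}$; granting that $\Phi$ preserves pushouts, we get $\Phi(\mathrm{source})\simeq\cofib\bigl(T_{\spf(k\oplus k[2])}\mo T_{\spf(k)}\bigr)\simeq T_{\spf(k\oplus k[2])}[1]$, since $T_{\spf(k)}\simeq 0$. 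By Lemma \ref{lemalgrepspf} together with $\du(k\oplus k[m])\simeq\freea_{\E_1}(k[-m-1])$ (\cite[Prop 4.5.6, Thm 4.4.5]{dagX}), one has $T_{\spf(k\oplus k[m])}\simeq\mfr\du(k\oplus k[m])[1]\simeq\bigoplus_{i\geq 1}k[1-i(m+1)]$. Hence $\Phi(\mathrm{source})$ has homotopy in degrees $-1,-4,-7,\dots$, while $\Phi(\spf(k\oplus k[1]))=T_{\spf(k\oplus k[1])}$ has homotopy in degrees $-1,-3,-5,\dots$, so $\Phi(s)$ is not an equivalence. The ``excision after stabilization'' you hope for is therefore false, and the formula $LX(E_\alpha[n])\simeq\colim_k\Omega^kX(E_\alpha[n+k])$ fails for general $X$ (it fails for the source of $s$, whose localization is $\spf(k\oplus k[1])$); your alternative route through a natural equivalence $L\Omega\simeq\Omega L$ on all of $\fun_*(\Aart,\spaces)$ breaks down for the same reason.

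What is true is the spectrification formula for $m$-proximate $X$, but the only available proof of it runs through exactly the theorem the paper cites, namely \cite[Thm 5.1.9]{dagX}: a functor $X$ with $X(\ast_\A)$ contractible is $m$-proximate if and only if the unit $X\mo LX$ is $(m-2)$-truncated. Once this is invoked, the lemma follows in one line by applying Remark \ref{remaproxom} pointwise to the unit map — this is the paper's proof — and your formula comes out a posteriori, since $LX$ is a formal moduli problem, so $LX(E_\alpha[n])\simeq\Omega^m LX(E_\alpha[n+m])\simeq\Omega^m X(E_\alpha[n+m])$. In other words, your input is not an auxiliary technical fact but is at the same depth as the lemma itself; the honest repair is to cite \cite[Thm 5.1.9]{dagX} (at which point your preparatory steps become unnecessary), or to reprove that theorem — and the localization-generators argument you sketch is not such a proof.
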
 

\begin{proof} 
By \cite[Thm 5.1.9]{dagX} the functor $X$ is $m$-proximate if and only if the unit map $X\mo LX$ is $(m-2)$-truncated. If it is the case, we deduce from Remark \ref{remaproxom} that the induced map $\Omega^m X\mo \Omega^m Y$ is an equivalence.
\end{proof}

\begin{nota}\label{notagpmon}
Let $\xcal$ be an $\oo$-category which admits finite products. Consider the cartesian symmetric monoidal structure $\xcal^\times$ with monoidal product given by the cartesian product (see \cite[§2.4.1]{ha}). Let $0\leq m\leq \oo$ and consider the $\oo$-category $\monem(\xcal)=\alg_{\E_m} (\xcal^\times)$ of $\E_m$-monoid objects in $\xcal$. Recall that an $\E_1$-monoid object $M$ in $\xcal$ with multiplication map $m:M\times M\mo M$ is called grouplike if the two maps 
$$(m,p_1):M\times M\mo M\times M, \quad (m,p_2):M\times M\mo M\times M$$
are equivalences in $\xcal$, where $p_i$ is the $i$th projection map. Recall that an $\E_m$-monoid object $M$ in $\xcal$ is called grouplike if the underlying $\eo$-monoid (for any embedding of $\oo$-operads $\eo^\te \hookrightarrow \E_m^\te$) is grouplike. Let $\monemgp(\xcal)\subseteq \monem(\xcal)$ denote the full subcategory spanned by grouplike $\E_m$-monoid objects (see \cite[Def 5.2.6.6]{ha}). If $\xcal$ is a presentable $\oo$-category, then an $\E_m$-monoid object $M$ in $\xcal$ is grouplike if and only if the monoid $\pi_0M$ is a group. 

If $\xcal$ is a pointed presentable $\oo$-category, by \cite[Not 5.2.6.11]{ha} the iterated loop functor $\Omega^m : \xcal\mo \xcal$ factorizes as a functor
$$\Omega^m:\xcal\mo \monemgp(\xcal),$$
where by convention, if $m=\oo$ we set $\Omega^\oo=\lim_m \Omega^m$ via the equivalence $\lim_m \monem(\xcal)\simeq \monei(\xcal)$. 
\end{nota} 

\begin{rema}\label{remacomgp}
Let $\xcal$ be an $\oo$-category which admits finite products. The property of being grouplike for an $\E_m$-monoid object is independent of the choice of the embedding of $\oo$-operads $\eo^\te \hookrightarrow \E_m^\te$ (see \cite[Rem 5.2.6.8]{ha}). This implies that the equivalence $\lim_m \monem(\xcal)\simeq \monei(\xcal)$ restricts to an equivalence $\lim_m \monemgp(\xcal)\simeq \moneigp(\xcal)$. 
\end{rema}

\begin{rema}\label{remaaddgp}
Let $\xcal$ be an $\oo$-category which admits finite products. Consider the full embedding $\monemgp(\xcal)\subseteq \monem(\xcal)$ of grouplike $\E_m$-monoid objects into all $\E_m$-monoid objects. This inclusion preserves finite product and therefore extends to a symmetric monoidal functor. We have an induced functor $\mon_{\eo}(\monemgp(\xcal))\mo \mon_{\eo}(\monem(\xcal))$ on the $\oo$-categories of $\eo$-monoid objects which is fully faithful. By the additivity theorem \cite[Thm 5.1.2.2]{ha} the target of this functor is equivalent to the $\oo$-category $\mon_{\E_{m+1}}(\xcal)$ of $\E_{m+1}$-monoid objects and by definition of grouplike objects the equivalence $\mon_{\eo}(\monem(\xcal))\simeq \mon_{\E_{m+1}}(\xcal)$ restricts to an equivalence $\mongp_{\eo}(\monem(\xcal))\simeq \mon_{\E_{m+1}}^{\mrm{gp}}(\xcal)$. Moreover the above embedding restricts to an embedding 
$$\mongp_{\eo}(\monemgp(\xcal))\mo \mongp_{\eo}(\monem(\xcal))\simeq \mon_{\E_{m+1}}^{\mrm{gp}}(\xcal)$$ on the subcategories of grouplike $\eo$-monoid objects. This embedding is an equivalence of $\oo$-categories. Indeed, it is also essentially surjective, for if $M$ is a grouplike $\E_1$-object in $\monem(\xcal)$, it follows from Remark \ref{remacomgp} that the underlying $\E_m$-monoid object of $M$ is grouplike and lies in the essential image.
\end{rema}

The following result provides conditions on the underlying deformation context under which the iterated loop functor is an equivalence for the $\oo$-category $\fmpa$ of formal moduli problems. 

\begin{prop}\label{propfmpgp}
Let $(\A, (E_\alpha)_{\alpha\in T})$ be a deformation context admitting a deformation theory and such that $\A$ is a pointed $\oo$-category. Then for every $0\leq m\leq \oo$, the iterated loop functor $\Omega^m:\fmpa\mo \monemgp(\fmpa)$ is an equivalence of $\infty$-categories. 
\end{prop}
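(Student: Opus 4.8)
The plan is to reduce the statement to the single case $m=1$ and then bootstrap, the heart of the matter being an explicit analysis via the tangent complex of the adjunction between looping and delooping inside $\fmpa$. First I would record that $\fmpa$ is pointed (by the hypothesis on $\A$, as in Notation \ref{notaloopprefmp}) and presentable (Notation \ref{notasf}), so the factorization $\Omega^m:\fmpa\mo \monemgp(\fmpa)$ of Notation \ref{notagpmon} is defined and $\Omega$ restricts from $\fun_*(\Aart,\spaces)$. The case $m=\oo$ will follow formally from the finite cases: by Remark \ref{remacomgp} one has $\moneigp(\fmpa)\simeq \lim_m \monemgp(\fmpa)$ and $\Omega^\oo=\lim_m\Omega^m$, so a compatible family of equivalences $\Omega^m$ assembles into an equivalence. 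For finite $m$ I would induct. The inductive step rests on Remark \ref{remaaddgp}: assuming $\Omega^m:\fmpa\mo\monemgp(\fmpa)$ is an equivalence, its target is equivalent, \emph{as a pointed presentable $\oo$-category}, to $\fmpa$; since the assertion that $\Omega:\mcal{Y}\mo\mongp_{\eo}(\mcal{Y})$ be an equivalence depends only on $\mcal{Y}$ as such a category, it transports across this equivalence. Combined with $\mongp_{\eo}(\monemgp(\fmpa))\simeq\mon_{\E_{m+1}}^{\mrm{gp}}(\fmpa)$ (Remark \ref{remaaddgp}) and the coherence $\Omega^{m+1}\simeq \Omega\circ\Omega^m$ of the iterated loop functors, this yields the statement for $m+1$. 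Everything therefore reduces to the base case $m=1$.

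For the base case I would study $\Omega:\fmpa\mo\mongp_{\eo}(\fmpa)$ directly. Since $\Omega$ preserves all small limits and both categories are presentable, the adjoint functor theorem \cite[Cor 5.5.2.9]{htt} supplies a left adjoint $B$, namely the relative bar construction computing the delooping inside $\fmpa$ as a geometric realization. To prove that the unit $\eta_G:G\mo\Omega BG$ and counit $\epsilon_X:B\Omega X\mo X$ are equivalences, I would test them on tangent complexes. Write $T:\fmpa\mo\prod_\alpha\Sp$ for the functor $X\mapsto (X(E_\alpha))_\alpha$. It is conservative, since a map of formal moduli problems is an equivalence iff it is so on each tangent spectrum; it preserves finite products and intertwines $\Omega$ with the shift, $T(\Omega X)\simeq T(X)[-1]$; and crucially it preserves small sifted colimits. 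This last property is exactly where the hypothesis enters: under the equivalence $\Psi:\B\simeq\fmpa$ coming from the deformation theory, $T$ corresponds to the family $(e_\alpha)_\alpha$, and by the very definition of a deformation theory each $e_\alpha$ preserves small sifted colimits. Because the forgetful functor $\mongp_{\eo}(\fmpa)\mo\fmpa$ is conservative, the composite $\widetilde T:\mongp_{\eo}(\fmpa)\mo\prod_\alpha\Sp$ is conservative as well.

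The computation then runs as follows. The target $\prod_\alpha\Sp$ is stable, so it carries a unique grouplike $\E_m$-monoid structure and $\mongp_{\eo}(\prod_\alpha\Sp)\simeq\prod_\alpha\Sp$, with delooping given by the suspension $[{+}1]$. Since $T$ preserves both the geometric realization defining $B$ (a sifted colimit) and the finite products occurring in the bar construction, it carries $B$ to this stable delooping, $T(BG)\simeq\widetilde T(G)[{+}1]$. Hence on tangent complexes $\eta_G$ and $\epsilon_X$ are identified with the unit and counit of the inverse pair $([{+}1],[{-}1])$ on $\prod_\alpha\Sp$, which are equivalences; by conservativity of $T$ and $\widetilde T$ the maps $\eta_G$ and $\epsilon_X$ are themselves equivalences, so $\Omega$ is an equivalence. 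The main obstacle is precisely this interchange — verifying that the abstract left adjoint $B$ is computed by the bar-construction geometric realization and that $T$ sends it to the suspension in $\prod_\alpha\Sp$. It hinges entirely on the sifted-colimit preservation built into the notion of deformation theory, without which the delooping formed inside $\fmpa$ could not be detected on tangent complexes; the remaining points (the $m=\oo$ assembly and the coherence of the iterated loop functors) are routine by comparison.
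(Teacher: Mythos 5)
Your proposal is correct and takes essentially the same route as the paper: the paper simply isolates your base case as a standalone statement (Lemma \ref{lemgpeq}) about a pointed presentable $\oo$-category equipped with a conservative, finite-limit- and geometric-realization-preserving functor to a stable presentable category, and then applies it to the functor $e\circ \tfr$, $X\mapsto (X(E_\alpha))_{\alpha}$, which is exactly your tangent functor $T$ (with sifted-colimit preservation coming from the $e_\alpha$'s of the deformation theory, just as you say). Your reduction to $m=1$ via Remark \ref{remaaddgp}, the $m=\oo$ case via Remark \ref{remacomgp}, and the detection of the unit and counit of the loop/bar adjunction by conservativity after comparison with the suspension on the stable target all coincide with the paper's argument.
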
 

Proposition \ref{propfmpgp} is a consequence of the following Lemma. 

\begin{lem}\label{lemgpeq} Let $\xcal$ be a pointed presentable $\oo$-category. Suppose that there exists a presentable stable $\oo$-category $\acal$ and a functor $f:\xcal\mo \acal$ which is conservative, preserves geometric realizations of simplicial objects and preserves finite limits. Then for any $0\leq m\leq \oo$ the functor $\Omega^m : \xcal\mo \monemgp(\xcal)$ is an equivalence of $\oo$-categories. 
\end{lem}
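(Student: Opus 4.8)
The plan is to construct an explicit candidate inverse to $\Omega^m$ using the bar construction, to verify it is an inverse by transporting the two resulting comparison maps along $f$ to the stable category $\acal$, and finally to bootstrap from the case $m=1$ to all $0\leq m\leq \oo$ by induction. The point of the hypotheses is exactly that $f$ lets us reduce every identity we need to the stable world, where it is automatic, and then descend it back along the conservative functor $f$.

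First I would treat the case $m=1$, which is the heart of the argument. Since $\xcal$ is pointed and presentable it admits all geometric realizations, so the bar construction defines a functor $\barc:\gpeu(\xcal)\mo \xcal$ sending a grouplike monoid $G$ to the realization $|\barc_\bul(G)|$ of the simplicial object $[n]\mapsto G^{\times n}$. There are canonical natural transformations — the group-completion map $\id\mo \Omega\,\barc$ on $\gpeu(\xcal)$ and the counit $\barc\,\Omega\mo \id$ on $\xcal$ — and it suffices to show both are equivalences. Because $f$ preserves finite limits it commutes with $\Omega$, and because it preserves finite products and geometric realizations it commutes with $\barc$; hence $f$ carries these two transformations to the corresponding ones for $\acal$. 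Now $\acal$ is stable, hence semiadditive, so the forgetful functor identifies $\gpeu(\acal)\simeq \acal$, and under this identification $\barc$ becomes the suspension $\Sigma$. Since $\Sigma$ and $\Omega$ are inverse equivalences on a stable category, both transformations are equivalences in $\acal$; as $f$ is conservative they are equivalences in $\xcal$, so $\Omega:\xcal\mo \gpeu(\xcal)$ is an equivalence.

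Next I would run the induction on $m$. Given the statement for $m$, I apply the case $m=1$ to the category $\mcal{Y}:=\monemgp(\xcal)$. This $\mcal{Y}$ is again pointed and presentable, and the functor $\monemgp(f):\mcal{Y}\mo \monemgp(\acal)$ is conservative and preserves finite limits and geometric realizations, since limits and sifted colimits of $\E_m$-monoid objects are computed on underlying objects and $f$ has the corresponding properties. Moreover $\acal$ stable (hence semiadditive) gives $\monemgp(\acal)\simeq \acal$, which is again stable, so $\mcal{Y}$ satisfies the hypotheses of the lemma and $\Omega:\mcal{Y}\mo \gpeu(\mcal{Y})$ is an equivalence. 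Combining this with the inductive hypothesis, the identification $\gpeu(\monemgp(\xcal))\simeq \mon^{\mrm{gp}}_{\E_{m+1}}(\xcal)$ of Remark \ref{remaaddgp}, and the compatibility $\Omega^{m+1}\simeq \Omega\circ\Omega^m$ of \cite[Not 5.2.6.11]{ha}, I conclude that $\Omega^{m+1}$ is an equivalence. The case $m=\oo$ then follows by passing to the limit, using $\Omega^\oo=\lim_m \Omega^m$ and $\moneigp(\xcal)\simeq \lim_m \monemgp(\xcal)$ from Remark \ref{remacomgp}, so that $\Omega^\oo$ is an equivalence as a limit of equivalences.

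The main obstacle is the case $m=1$, and specifically the verification that the counit $\barc\,\Omega\mo \id$ is an equivalence: this is the assertion that every object of $\xcal$ is a delooping, which fails for a general pointed presentable category (for instance pointed spaces $\spaces_*$, which admit no such $f$). It is precisely here that all three hypotheses on $f$ are indispensable — conservativity to detect the equivalence, preservation of finite limits to commute with $\Omega$, and preservation of geometric realizations to commute with $\barc$ — and where the stability of $\acal$, through $\Sigma\dashv\Omega$ being an equivalence, does the real work. A secondary point requiring care is checking that these hypotheses are inherited by $\monemgp(\xcal)$ in the inductive step, which rests on the forgetful functor from $\E_m$-monoids creating the relevant limits and sifted colimits.
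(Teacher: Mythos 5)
Your base case $m=1$ is the paper's argument, essentially verbatim in outline: form the adjunction $\barc \dashv \Omega$, use that $f$ intertwines $\Omega$ with $\Omega_{\acal}$ (finite limits) and $\barc$ with $\barc_{\acal}$ (finite products and geometric realizations), use stability of $\acal$ to know the unit and counit are equivalences downstairs, and conclude by conservativity. (Both you and the paper gloss the same technical point here: that the two intertwining equivalences are compatible with the unit/counit, i.e.\ that $f$ really carries the unit and counit of the $\xcal$-adjunction to those of the $\acal$-adjunction. This is a mate-compatibility that can be checked, e.g.\ by identifying the counit at $X$ with the canonical map from the realization of the \v{C}ech nerve of $\ast\mo X$, which $f$ visibly preserves; you are at the same level of rigor as the paper on this.) The treatment of $m=\oo$ by passing to the limit via Remark \ref{remacomgp} also matches. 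You omit $m=0$, which the statement includes; the paper disposes of it using pointedness of $\xcal$ via $\mon_{\E_0}^{\mrm{gp}}(\xcal)\simeq \xcal_*\simeq \xcal$.

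Where you genuinely diverge from the paper, and where there is a gap, is the inductive step. You apply the lemma itself to $\mcal{Y}=\monemgp(\xcal)$, which forces you to verify its hypotheses, and your justification --- that ``limits and sifted colimits of $\E_m$-monoid objects are computed on underlying objects'' --- is not a general fact about pointed presentable $\oo$-categories. For limits it is true, but for geometric realizations it requires that finite products in $\xcal$ commute with geometric realizations (this is precisely the compatibility hypothesis under which the forgetful functor from algebra objects detects sifted colimits, \cite[\S 3.2.3]{ha}); a pointed presentable $\oo$-category is never cartesian closed (unless trivial), so there is no cheap reason for this, and it must instead be deduced from the hypotheses of the lemma: apply $f$ to the comparison map $\vert X_\bul\times Y_\bul\vert \mo \vert X_\bul\vert \times \vert Y_\bul\vert$, note that it becomes an equivalence in the stable category $\acal$, and use conservativity; one then also needs that grouplike objects are closed under realizations (realize the shear maps). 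So your induction is repairable, but as written it rests on an unproved claim. The paper's induction avoids the issue entirely: by Remark \ref{remaaddgp}, $\Omega^{m+1}$ factors through $\mongp_{\eo}(\monemgp(\xcal))\simeq \mon_{\E_{m+1}}^{\mrm{gp}}(\xcal)$ as a composite of $\Omega^1$ (an equivalence by the base case) with the functor $\mongp_{\eo}(\Omega^m)$ induced by the inductive equivalence $\Omega^m$, so pure functoriality suffices; equivalently, the inductive equivalence $\Omega^m:\xcal\simeq \monemgp(\xcal)$ transports the base case to $\monemgp(\xcal)$ with nothing further to check. You should either insert the $f$-argument above or, better, restructure the induction along these lines.
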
 

\begin{proof} If $m=0$ the assertion follows from the assumption that $\xcal$ is pointed, since there are equivalences $\mon_{\E_0}^{\mrm{gp}}(\xcal)\simeq \mon_{\E_0}(\xcal)\simeq \xcal_*$, where $\xcal_*$ is the $\oo$-category of pointed objects. Otherwise for finite $m$, by Remark \ref{remaaddgp} and ascending induction on $m$ it suffices to prove the result for $m=1$. For this, consider the diagram 
$$\xymatrix{ \xcal  \ar[r]^-{ \Omega_\xcal } \ar[d]_-{ f} & \mongp_{\E_1} (\xcal) \ar[d]^-{f^{\mrm{gp}} } \\  \acal \ar[r]^-{\Omega_\acal } & \mongp_{\E_1} (\acal)  .}$$
Because $f$ preserves finite limits, there exists a natural equivalence $f^{\mrm{gp}}\Omega_\xcal\simeq \Omega_\acal  f$ making this diagram commutative up to equivalence. The $\oo$-category $\acal$ being stable, the functor $\Omega_\acal$ is an equivalence . The functors $\Omega_\xcal$ and $\Omega_\acal$ admit as left adjoint the functors $B_{\xcal}$ and $B_{\acal}$ respectively given by the bar construction. By virtue of the assumption that $f$ preserves geometric realizations of simplicial objects, and that the bar construction is given by the geometric realization of a simplicial object (see \cite[Rem 5.2.2.8]{ha}), there exists a natural equivalence $f B_\xcal\simeq B_\acal  f^{\mrm{gp}}$. Let $\epsilon: B_\xcal \Omega_\xcal\mo \id_\xcal$ be the counit map. The composition of $\epsilon$ with the composite functor $\Omega_\acal f$ is equivalent to the natural transformation 
$$ f^{\mrm{gp}} \Omega_\xcal \simeq \Omega_\acal B_\acal  f^{\mrm{gp}} \Omega_\xcal  \simeq \Omega_\acal f B_\xcal \Omega_\xcal \mo \Omega_\acal f$$
which is an equivalence because $f$ preserves finite limits. Because $\Omega_\acal$ is an equivalence and $f$ is conservative, this implies that $\epsilon$ is an equivalence. We can prove similarly that the unit map $\eta: \id\mo \Omega_\xcal B_\xcal$ is an equivalence by composing with the functor $f^{\mrm{gp}}$. 
The assertion for $m=\oo$ follows by passing to the limit over $m$ and form Remark \ref{remacomgp}.  
\end{proof}

\begin{proof}[Proof of Proposition \ref{propfmpgp}] Let $\deth:\A^{op} \mo \B$ be a deformation theory for the deformation context $(\A, (E_\alpha)_{\alpha\in T})$. By definition, we have at our disposal for every $\alpha\in T$, a functor $e_\alpha : \B\mo \Sp$ which preserves small sifted colimits and such that the family $\{e_\alpha\}_\alpha$ is jointly conservative (see \cite[Cor 1.3.8, Def 1.3.9]{dagX}). This implies that the functor 
$$e=\prod_{\alpha\in T} e_\alpha : \B\mo \prod_{\alpha\in T}\dal_\alpha=\acal$$ 
where $\acal_\alpha=\Sp$ for every $\alpha$, is conservative and preserves small sifted colimits. By construction, the functor $e_\alpha$ preserves finite limits, so that $e$ has the same property. Denote by $\tfr : \fmpa \mo \B$ the inverse of the equivalence $\Psi:\B\mo \fmpa$ (see \cite[Thm 1.3.12]{dagX}). Then the composite functor $e\circ \tfr : \fmpa\mo \acal$ satisfies all the assumptions of Lemma \ref{lemgpeq}, by which we deduce the claim. 
\end{proof}

\begin{nota}
In the situation of Proposition \ref{propfmpgp}, we denote by $B^m:\monemgp(\fmpa)\mo \fmpa$ the inverse of the equivalence $\Omega^m:\fmpa\mo \monemgp(\fmpa)$. 
\end{nota}

\begin{nota}
Let $(\A, (E_\alpha)_{\alpha\in T})$ be a deformation context such that $\A$ is a pointed $\oo$-category. Consider the restriction $\Omega^m_{\mrm{prox}}: \prox(m)_\A\mo \monemgp(\fun_*(\Aart, \spaces))$ of the iterated loop functor $\Omega^m:\fun_*(\Aart, \spaces)\mo \monemgp(\fun_*(\Aart, \spaces))$. By Lemma \ref{lemproxfmp}, the functor $\Omega^m_{\mrm{prox}}$ factorizes as a functor $\Omega^m_{\mrm{prox}}: \prox(m)_\A\mo \monemgp(\fmpa)$. 
\end{nota}

\begin{cor}\label{corassfmp}
Let $(\A, (E_\alpha)_{\alpha\in T})$ be a deformation context admitting a deformation theory and such that $\A$ is a pointed $\oo$-category. Let $m\geq 0$ and let $L_m: \prox(m)_\A\mo \fmpa$ denote the restriction of the functor $L$ to the subcategory $\prox(m)_\A$ of $m$-proximate formal moduli problems. Then there exists an equivalence of functors $L_m\simeq B^m\Omega^m_{\mrm{prox}}: \prox(m)_\A\mo \fmpa$. In particular, the functor $L_m$ preserves all small limits. As a consequence, for every $m$-proximate formal moduli problem $X$, the functor $L_mX$ is given by the formula
$$L_m X(A)\simeq \map_{\monemgp(\fmpa)} (\Omega^m \spf(A), \Omega^m X).$$
\end{cor}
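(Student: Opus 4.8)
The plan is to deduce everything from the pointwise equivalence $\Omega^m X\simeq \Omega^m LX$ of Lemma \ref{lemproxfmp} together with the fact that $\Omega^m:\fmpa\mo\monemgp(\fmpa)$ is an equivalence with inverse $B^m$ (Proposition \ref{propfmpgp}).

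First I would construct the natural equivalence $L_m\simeq B^m\Omega^m_{\mrm{prox}}$. Starting from the unit transformation $X\mo LX$ of the adjunction $L_\A\dashv i_\A$ of Notation \ref{notasf}, restricted to $\prox(m)_\A$, I apply the loop functor $\Omega^m$ of $\fun_*(\Aart,\spaces)$. Since $i_\A$ preserves small limits it commutes with $\Omega^m$, so this yields a natural transformation $\Omega^m_{\mrm{prox}}\mo \Omega^m\circ L_m$ of functors $\prox(m)_\A\mo\monemgp(\fmpa)$, where on the right $\Omega^m:\fmpa\mo\monemgp(\fmpa)$ is the equivalence of Proposition \ref{propfmpgp}. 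Lemma \ref{lemproxfmp} asserts precisely that this transformation is a pointwise, hence natural, equivalence. Postcomposing with the inverse equivalence $B^m$ gives $B^m\Omega^m_{\mrm{prox}}\simeq B^m\Omega^m L_m\simeq L_m$, as desired.

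Next, limit preservation follows formally. The functor $B^m$ is an equivalence, so it preserves all small limits, and it suffices to treat $\Omega^m_{\mrm{prox}}$. I would first check that $\prox(m)_\A$ is closed under small limits inside $\fun_*(\Aart,\spaces)$: for a limit $X=\lim_j X_j$ the comparison map to $X(A_0)\times_{X(A_{01})}X(A_1)$ is the limit of the corresponding maps for the $X_j$, and a limit of $(m-2)$-truncated maps is again $(m-2)$-truncated. Since the loop functor $\Omega^m$ is computed pointwise as an iterated pullback over the basepoint, it preserves limits, so the composite $\Omega^m\circ(\prox(m)_\A\hookrightarrow\fun_*(\Aart,\spaces))$ preserves limits. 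Writing $G:\monemgp(\fmpa)\mo\fun_*(\Aart,\spaces)$ for the underlying-object functor, $G$ is conservative and limit-preserving, and $G\circ\Omega^m_{\mrm{prox}}$ is exactly this composite; the standard argument that a conservative limit-preserving $G$ reflects limit-preservation of a lift then shows $\Omega^m_{\mrm{prox}}$ preserves limits, whence so does $L_m$.

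Finally, for the explicit formula I would use that $L_m X$ is a formal moduli problem and that $\spf(A)$ co-represents evaluation at $A$ (Notation \ref{notaspf}), so by co-representability $L_m X(A)\simeq\map_{\fmpa}(\spf(A),L_m X)$. Applying the fully faithful equivalence $\Omega^m$ of Proposition \ref{propfmpgp} identifies the right-hand side with $\map_{\monemgp(\fmpa)}(\Omega^m\spf(A),\Omega^m L_m X)$, and a final appeal to Lemma \ref{lemproxfmp} replaces $\Omega^m L_m X$ by $\Omega^m X$, giving the stated formula. The step I expect to be the main obstacle is upgrading the objectwise equivalence of Lemma \ref{lemproxfmp} to a genuinely natural equivalence of functors and justifying the coherent commutation of $i_\A$ with $\Omega^m$; the limit-closure of $\prox(m)_\A$ and the conservativity argument are routine but worth spelling out.
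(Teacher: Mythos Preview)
Your proposal is correct and follows essentially the same approach as the paper: apply Lemma \ref{lemproxfmp} to obtain $\Omega^m_{\mrm{prox}}\simeq \Omega^m L_m$, postcompose with the inverse $B^m$ of Proposition \ref{propfmpgp}, deduce limit preservation from the fact that $\Omega^m_{\mrm{prox}}$ is built from limits and $B^m$ is an equivalence, and derive the formula via co-representability of $\spf(A)$. The paper's proof is terser on the naturality issue and on the closure of $\prox(m)_\A$ under limits, but your added detail on these points is welcome and does not constitute a different route.
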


\begin{proof} 
By Lemma \ref{lemproxfmp}, there is an equivalence of functors $\Omega^m_{\mrm{prox}} \simeq \Omega^m L_m$, which after composition with $B^m$ provides by Proposition \ref{propfmpgp} the expected natural equivalence $B^m \Omega^m_{\mrm{prox}}\simeq L_m$. Because $\Omega^m_{\mrm{prox}}$ is given by a limit and that $B^m$ is an equivalence, we have that $L_m$ preserves all small limits. Let $X$ be an $m$-proximate formal moduli problem. Then for $A\in \Aart$ we have natural homotopy equivalences
\begin{align*}
L_mX(A)\simeq B^m\Omega^m_{\mrm{prox}} X (A) & \simeq \map_{\fmpa}(\spf(A), B^m\Omega^m X)\\
 & \simeq \map_{\monemgp(\fmpa)} (\Omega^m \spf(A), \Omega^m X).
\end{align*} 
\end{proof}


\section{Reminders on deformations of categories}

We start with notations and terminology related to presentable, compactly generated and linear $\infty$-categories, as well as the relation with differential graded categories. We follow the terminology of \cite{htt} and \cite{ha}. We also point out the reference \cite{marcoth} where we can find an overview on this subject.

\subsection{Linear $\oo$-categories}\label{sslincat}

We denote by $\prl$ the $\infty$-category of not necessarily small presentable $\infty$-categories together with functors which are left adjoints (see \cite[Def 5.5.3.1]{htt} for a construction of $\prl$). 

An $\infty$-category $\ccal$ is said to be \emph{compactly generated} if it is presentable and if in addition there exists a small $\infty$-category $\ccal_0$ which admits finite colimits and an equivalence $\ind(\ccal_0)\simeq \ccal$. In this situation, by \cite[Thm 5.4.2.2]{htt} the small $\infty$-category $\ccal_0$ which generates $\ccal$ can be choosen to be the small idempotent complete full subcategory $\ccal^c\subseteq \ccal$ consisting of compact objects of $\ccal$.

By \cite[Prop 4.8.1.14]{ha}, the $\infty$-category $\prl$ admits a symmetric monoidal structure in the sense of \cite[Def 2.1.2.13]{ha} and we denote the induced symmetric monoidal product by $\te$. If $\mcal{C}, \mcal{C}'$ are presentable $\infty$-categories of the form $\mcal{C}=\pre(\mcal{C}_0)$ and $\mcal{C}'=\pre(\mcal{C}'_0)$ with $\ccal_0$, $\ccal_0$ small $\infty$-categories, then we have $\ccal\te \ccal=\pre(\ccal_0\times \ccal_0)$
where $\times$ is the cartesian symmetric monoidal structure on $\catinf$. The unit of $\prlte$ is the $\infty$-category $\spaces$ of small spaces. Moreover this symmetric monoidal structure is closed and for two presentable $\infty$-categories $\ccal$, $\ccal$, the $\infty$-category $\funl(\ccal, \ccal)$ of colimit preserving $\infty$-functors is presentable. By \cite[Lem 5.3.2.11]{ha}, if $\ccal$ and $\ccal$ are compactly generated $k$-linear $\infty$-categories, the tensor product $\ccal\te\ccal$ is again compactly generated and the subcategory $(\ccal\te \ccal)^c$ is generated under finite colimits by $\ccal^c\times \ccal^c$. 

Let $k$ be an $\ei$-ring. The $\infty$-category $\modk$ of $k$-module spectra admits a symmetric monoidal structure $\modk^\te$ given by the smash product over $k$. Considering $\modk^\te$ as an $\ei$-algebra object in the symmetric monoidal $\infty$-category $\prlte$, we define the $\infty$-category of $k$-linear $\infty$-categories to be
$$\prlk:=\Mod_{\modk^\te}(\prlte).$$
Morphisms in $\prlk$ are called $k$-linear functors. Because $\modk$ is a stable $\oo$-category in the sense of \cite[Def 1.1.1.9]{ha}, the underlying $\oo$-category of any $k$-linear $\oo$-category is stable. For example if $k=\s$ the sphere spectrum, then $\prl_\s$ is the $\oo$-category of presentable stable $\oo$-categories with exact functors between them. The symmetric monoidal structure on $\prl$ induces a symmetric monoidal structure on $\prlk$. We denote the corresponding tensor product by $\te_k$. Moreover the symmetric monoidal structure on $\catk$ is closed. For $k$-linear $\infty$-categories $\ccal$, $\ccalp$ and $\ccalpp$ there exists a $k$-linear $\infty$-category $\funl_k(\ccal, \ccal)$ of $k$-linear colimit preserving $\infty$-functors, such that there exists an equivalence 
$$\map_{\catk}(\ccal\te_k\ccalp, \ccalpp)\simeq \map_{\catk}(\ccal, \fun_k^L(\ccalp, \ccalpp)).$$
In the case $\ccal=\ccalp$, we denote by $\End_k(\ccal)=\funl_k(\ccal,\ccal)$ the $k$-linear $\oo$-category of $k$-linear endofunctors of $\ccal$. Any $k$-linear $\infty$-category $\ccal\in \catk$ is enriched over $\modk$. Indeed if $E,F\in \ccal$ are objects, the $\oo$-functor $\modk\mo \spaces$ given on objects by $V\mapsto \map_\ccal (V\te E,F)$ commutes with small colimits and by presentability of $\ccal$ is therefore representable by an object $\umap_\ccal (E,F)\in \modk$ satisfying the universal property 
$$\map_\ccal (V\te E,F)\simeq\map_{\modk}(M, \umap_\ccal (E,F)).$$
We set the notation $\extg^m_\ccal(E,F)=\pi_{-m} \umap_\ccal (E,F)$ for the Ext groups in $\ccal$. 

We now recall the relation of the above with the homotopy theory of $k$-linear differential graded categories. Let $k$ be a discrete commutative ring. We consider two $\infty$-categories of $k$-linear dg-categories (we use the terminology of \cite[§6.1.1]{marcoth}):

\begin{itemize}

\item $\dgklp$ : the $\infty$-category encoding the homotopy theory of presentable $k$-linear dg-categories up to quasi-equivalence. Morphisms are dg-functors which commute with small sums. The dg-nerve construction provides an $\infty$-functor
$$\nerdglp : \dgklp\lmo \prl.$$
 
\item $\dgkc$ : the $\infty$-category encoding the homotopy theory of presentable compactly generated $k$-linear dg-categories up to quasi-equivalence. Morphisms are dg-functors which commute with small sums and preserve compact objects. The dg-nerve restricts to an $\infty$-functor
$$\nerdgc : \dgkc\lmo \prlc.$$
\end{itemize}

By the work of Cohn \cite{cohndgst} there exists a commutative diagram of $\infty$-categories
$$\xymatrix{  \dgklp\ar[r]^-{\nerdglp }  & \catk  \\ \dgkc \ar[r]^-{ \nerdgc} \ar@{^{(}->}[u]^-{ } & \prlkc \ar@{^{(}->}[u]^-{ }} $$
where $\prlkc=\Mod_{\modk^\te} (\prlc)$ is the $\oo$-category of $k$-linear compactly generated $\oo$-categories together with $k$-linear functors which preserve small colimits. Moreover the bottom horizontal arrow is an equivalence of $\infty$-categories.

\begin{nota}
Let $k$ be an $\ei$-ring and $\ccal$ a $k$-linear $\infty$-category. If $\ecal$ is a collection of objects of $\ccal$, we denote by $\ecal^\perp$ the full subcategory of $\ccal$ consisting of objects $C$ such that $\umap_\ccal(E,C)\simeq 0$ for every $E\in \ecal$. 
\end{nota}

\begin{df} Let $k$ be an $\ei$-ring. Let $\ccal$ be a $k$-linear $\infty$-category. 
\begin{enumerate}
 
\item A collection $\ecal$ of objects of $\ccal$ \emph{generates $\ccal$} if $\ecal^\perp$ consists of zero objects of $\ccal$. 
\item A collection $\ecal$ of objects of $\ccal$ is \emph{a family of compact generators of $\ccal$} if each object $E\in \ecal$ is compact and if the collection $\ecal$ generates $\ccal$. 
\item A \emph{compact generator} of $\ccal$ is a compact object $E$ of $\ccal$ such that the family $\ecal=\{E\}$ generates $\ccal$. 
\end{enumerate}
\end{df}

\begin{df}\label{deftamely} Let $k$ be an $\ei$-ring. A $k$-linear $\oo$-category $\ccal$ is said to be \emph{tamely compactly generated} if $\ccal$ is compactly generated and if for any compact objects $E,E'$ of $\ccal$ we have $\extg^m_\ccal(E,E')=0$ for $m\gg 0$. 
\end{df}

\begin{rema}
Let $k$ be an $\ei$-ring and $\ccal$ a $k$-linear $\oo$-category. Suppose that $\ccal$ admits a single compact generator $E$. Then $\ccal$ is tamely compactly generated if and only if $\extg^m_\ccal (E,E)=0$ for $m\gg 0$. 
\end{rema}


\subsection{Deformations of objects}\label{ssobjdef}

We recall from \cite[§5.2]{dagX} the fundamental results about the deformation functor of an object in a linear $\oo$-category over a field. 

Let $k$ be a field, let $\ccal$ be a $k$-linear $\oo$-category and let $E\in \ccal$ be an object. Recall from \cite[Const 5.2.1]{dagX} that we can define a functor 
$$\objdef_E: \algkart\mo \spaces$$
from artinian $\eo$-algebras over $k$ to spaces which satisfies the following properties. 

\begin{itemize}
\item For every artinian $\eo$-algebra $A$ over $k$, the space $\objdef_E(A)$ classifies deformations of $E$ over $A$ in $\ccal$, or in other words it is the classifying space of pairs $(E_A, u)$ with $E_A\in \rmod_A(\ccal)$ a right $A$-module object in $\ccal$ and $u:E_A\te_A k\simeq E$ an equivalence in $\ccal$. 

\item In general, the functor $\objdef_E$ does not satisfy the derived Schlessinger conditions and is not a formal $\eo$-moduli problem. Let $\objdefa_E=L(\objdef_E)$ denote the associated formal $\eo$-moduli problem. 

\item By \cite[Cor 5.2.5]{dagX}, the functor $\objdef_E$ is a $1$-proximate formal $\eo$-moduli problem, or equivalently the unit map $\objdef_E\mo \objdefa_E$ is $(-1)$-truncated. 

\item Let $\uend_\ccal (E)$ be the $\eo$-algebra of endomorphisms of $E$ in $\ccal$. By \cite[Thm 5.2.8]{dagX} the augmented $\eo$-algebra which controls the formal $\eo$-moduli problem $\objdefa_E$ is given by $k\oplus \uend_\ccal (E)$. In other words there exists for every $A\in \algkart$ a natural equivalence 
$$\objdefa_E(A)\simeq \map_{\algk} (\du(A), \uend_\ccal (E)).$$
In particular the tangent spectrum of $\objdefa_E$ is given by $T_{\objdefa_E}\simeq \uend_\ccal (E)[1]$. 

\item Koszul duality for modules implies that for every artinian $\eo$-algebra $A$ we have a natural homotopy equivalence
$$\objdefa_E(A)\simeq \lmod_A^! (\ccal)^\simeq \times_{\ccal^\simeq} \{E\}$$
with the space of deformations of $E$ as an Ind-coherent right $A$-modules in $\ccal$, where $\lmod_A^! (\ccal)=\lmod_A^!\te_k \ccal$ is the $\oo$-category of Ind-coherent right $A$-modules in $\ccal$ (see \cite[Rem 5.2.16]{dagX}). 

\item By \cite[Prop 5.2.14]{dagX}, under the assumption that $\ccal$ admits a left complete $t$-structure and that $E$ is connective, then $\objdef_E$ is a formal $\eo$-moduli problem. 

\end{itemize}

\begin{nota}
Let $k$ be a field, let $\ccal$ be a $k$-linear $\oo$-category and let $E\in \ccal$ be an object. We will denote by $\objdefet_E$ the restriction of the functor $\objdef_E$ to artinian $\et$-algebras along the forgetful functor $\algetkart\mo \algkart$. 
\end{nota}


\subsection{Deformations of categories}\label{sscatdef}

We first recall from \cite[§5.3]{dagX} the fundamental results about the deformation functor of a linear $\oo$-category over a field. Then we recall the construction of this functor.

Let $k$ be field and $\ccal$ a $k$-linear $\oo$-category. In \cite[§5.3]{dagX} is constructed a functor 
$$\catdef_\ccal : \algetkart\lmo \bspaces$$
from artinian $\et$-algebras over $k$ to not necessarily small spaces which satisfies the following properties. 

\begin{itemize}
\item For any artinian $\et$-algebra $A$ over $k$, the space $\catdef_\ccal(A)$ classifies deformations of $\ccal$ over $A$, or in other words pairs $(\ccal_A, u)$ with $\ccal_A$ a right $A$-linear $\oo$-category and $u:\ccal_A\te_A k\simeq \ccal$ an equivalence in $\prlk$ (see below for the definition of linear $\oo$-category over an $\et$-algebra). We observe that the space $\catdef_\ccal(k)$ is contractible. 

\item In general, the functor $\catdef_\ccal$ does not satisfy the derived Schlessinger conditions, and is not an $\et$-formal moduli problem. Let $\catdefa_\ccal:=L(\catdef_\ccal)$ denote the associated $\et$-formal moduli problem and 
$$\sigma_\ccal : \catdef_\ccal\mo \catdefa_\ccal$$
denote the unit map. In other words, the map $\sigma_\ccal$ is in general not an equivalence. We will see a simple example below. 

\item However, the map $\sigma_\ccal$ is rather close to be an equivalence as shown by Lurie. Namely, by \cite[Cor 5.3.8]{dagX} the functor $\catdef_\ccal$ is a $2$-proximate $\et$-formal moduli problem in the sense of \cite[Def 5.1.5]{dagX} (with values in essentially small spaces). By \cite[Thm 5.1.9]{dagX}, it means that the map $\sigma_\ccal$ has $0$-truncated fibers, or in other words, that the map $\sigma_\ccal$ induces an isomorphism on $\pi_i$ for $i\geq 2$ and is injective on $\pi_1$ (all homotopy groups are based at the trivial deformation of $\ccal$). 

\item Let $\xi(\ccal)$ denote the $\et$-algebra over $k$ given by the center of the $\oo$-category $\ccal$ in the sense of \cite[Def 5.3.10]{dagX}. The underlying $k$-module spectrum of $\xi(\ccal)$ is given by $\uend_{\End_k(\ccal)} (\id_\ccal)$ so that the homotopy groups of $\xi(\ccal)$ are isomorphic to the Hochschild cohomology groups of $\ccal$. By \cite[Thm 3.5.1]{dagX}, the augmented $\et$-algebra which controls the $\et$-formal moduli problem $\catdefa_\ccal$ is given by $k\oplus \xi(\ccal)$. In other words, for any artinian $\et$-algebra $A$ over $k$ there exists a natural equivalence 
$$\catdefa_\ccal(A)\simeq \map_{\algetk} (\dt(A), \xi(\ccal))$$
where $\dt$ is the $\et$-Kozsul duality functor. In particular the tangent spectrum of $\catdefa_\ccal$ is given by $T_{\catdefa_\ccal}\simeq \xi(\ccal)[2]$. 
\end{itemize}

We have stronger properties if we restrict to compactly generated deformations as observed by Lurie (see \cite[Variant 5.3.4]{dagX}), a condition which is reasonable in most examples of interest. Let $\catdefc_\ccal$ denote the subfunctor of $\catdef_\ccal$ spanned by pairs $(\ccal_A,u)$ where $\ccal_A$ is compactly generated. By \cite[Variant 5.3.6]{dagX}, the functor $\catdefc_\ccal$ has values in essentially small spaces, and can therefore be viewed as a functor 
$$\catdefc_\ccal : \algetkart\lmo \spaces$$
which satisfies the following properties. 

\begin{itemize}
\item The natural embedding $\catdefc_\ccal\mo \catdef_\ccal$ has $0$-truncated fibers, which implies that $\catdefc_\ccal$ is a $2$-proximate $\et$-formal moduli problem and that the induced map $L(\catdefc_\ccal)\mo L(\catdef_\ccal)=\catdefa_\ccal$ is an equivalence. We denote by 
$$\theta_\ccal : \catdefc_\ccal\mo \catdefa_\ccal$$
the unit map. 

\item Suppose that $\ccal$ is tamely compactly generated (see Definition \ref{deftamely}). Then the functor $\catdefc_\ccal$ is a $1$-proximate $\et$-formal moduli problem by \cite[Prop 5.3.21]{dagX}. In other words, the map $\theta_\ccal$ induces an isomorphism on $\pi_i$ for $i\geq 1$ and an injection on $\pi_0$. Our main result gives additional conditions under which the surjectivity on $\pi_0$ holds for formal deformations. 
\end{itemize}

We recall now the construction of the functor $\catdef_\ccal$ from \cite[Const 5.3.2]{dagX}. By \cite[Thm 4.8.5.16]{ha} there exists a functor
\begin{align*}
\algk & \lmo \catk\\
A & \longmapsto \lmod_A
\end{align*} 
which extends to a symmetric monoidal functor. Passing to algebra objects we obtain a functor 
\begin{align*}
\algetk\simeq \alg(\algk) & \lmo \alg(\catk) \\
A & \longmapsto \lmod_A^\te
\end{align*}
which to an $\et$-algebra $A$ associates the monoidal $\infty$-category of left $A$-modules. 

Let $A$ be an $\et$-algebra over $k$. A \emph{right $A$-linear $\infty$-category} (resp. \emph{left $A$-linear $\infty$-category}) is by definition a right $\lmod_A^\te$-module in $\catk$ (resp. a left $\lmod_A^\te$-module in $\catk$). If $A\mo A'$ is a morphism of $\et$-algebras over $k$, and $\ccal_A$ a right $A$-linear $\infty$-category, we denote by $\ccal_A\te_A A'$ the extension of scalars $\ccal_A\tec_{\lmod_A} \lmod_{A'}$.

Consider the cocartesian fibration $\rmod(\prlk)\mo \alg(\prlk)$ with fiber over a $k$-linear monoidal $\oo$-category $\dal^\te$ given by the $\oo$-category $\rmod_{\dal^\te} (\prlk)$ of $\oo$-categories right tensored over $\dal^\te$. Set $\rcat_k=\algetk\times_{\alg(\prlk)} \rmod(\prlk)$ and consider the induced cocartesian fibration $p:\rcat_k\mo \algetk$ with fiber over an $\et$-algebra $A$ given by the $\oo$-category $\rmod_{\lmod_A^\te} (\prlk)$ of right $A$-linear $\oo$-categories. Consider the subcategory $\rcat_k^{\mrm{cocart}}$ consisting of $p$-cocartesian morphisms so that we obtain a left fibration $\rcat_k^{\mrm{cocart}}\mo \algetk$ with fiber over $A$ the space of right $A$-linear $\oo$-categories. 

Let $\ccal$ be a $k$-linear $\oo$-category and consider the pair $(k,\ccal)$ as an object of $\rcat_k^{\mrm{cocart}}$. The \emph{$\oo$-category of deformations of $\ccal$} is the $\oo$-category 
$\deform[\ccal]=(\rcat_k^{\mrm{cocart}})_{/(k,\ccal)}$. The induced left fibration $\deform[\ccal]\mo (\algetk)_{/k}\simeq \algetka$ is classified by a functor $\algetka\mo \bspaces$ whose restriction to artinian $\et$-algebras is denoted by $\catdef_\ccal : \algetkart\lmo \bspaces$.

\section{Generators and curvature in deformations of categories}\label{ssformaldef}

In this section we first set up some preliminary results concerning simultaneous deformations and deformations of $\eo$-algebras. Finally, in the last subsection, we use them to prove our main results concerning formal deformations.


\subsection{Simultaneous deformations}\label{sssimul}

Let $k$ be a field. We now study the deformation functor of a pair $(\ccal, E)$ with $\ccal$ a $k$-linear $\oo$-category and $E\in \ccal$ an object. 

We first give a construction of this functor which consists in mimicking the construction of $\catdef_\ccal$ in \cite[Const 5.3.2]{dagX} but replacing the $\oo$-category $\prlk$ by the slice $\oo$-category $(\prlk)_{\modk/}$ of pairs $(\ccal, E)$ with $\ccal$ a $k$-linear $\oo$-category and $E$ an object of $\ccal$. 

\begin{const}\label{constsimdef}

\cite[Thm 4.8.5.16]{ha} provides a symmetric monoidal functor 
\begin{align*}
\algk & \lmo (\prlk)_{\modk/}  \\
A & \longmapsto (\lmod_A, A)
\end{align*} 
where the $\oo$-category $(\prlk)_{\modk/}$ is endowed with the natural symmetric monoidal structure from \cite[Rem 2.2.2.25]{ha} coming from the fact that $\modk$ is the unit in $\prlkte$. 
Applying algebra objects we obtain a functor 
\begin{align*}
\algetk\simeq \alg(\algk) & \mo \alg((\catk)_{\modk/})\simeq \alg(\prlk) \\
A & \longmapsto \lmod_A^\te.
\end{align*} 
Consider the cocartesian fibration
$$\rmod(\prlkp)\mo \alg(\prlk)$$
with fiber over a monoidal $\oo$-category $\dal^\te$ the $\oo$-category $\rmod_{\dal^\te}(\prlkp)$ of pairs $(\M, M)$ where $\M$ is a $k$-linear $\oo$-category right tensored over $\dal^\te$ and $M\in \M$ an object. 
Set $\rcatkp=\algetk\times_{\alg(\prlk)} \rmod(\prlkp)$ and consider the induced cocartesian fibration $q:\rcatkp\mo \algetk$ with fiber over an $\et$-algebra $A$ the $\oo$-category of right $A$-linear $\oo$-categories together with a distinguished object. Let $(\rcatkp)^{\mrm{cocart}}$ be the subcategory of $q$-cocartesian morphisms so that we have an induced left fibration $(\rcatkp)^{\mrm{cocart}}\mo \algetk$. Let $\ccal$ be a $k$-linear $\oo$-category and $E$ an object of $\ccal$. The \emph{$\infty$-category of deformations of $(\ccal, E)$} is the $\oo$-category 
$$\deform[\ccal, E] := (\rcatkp)^{\mrm{cocart}}_{/(k,\ccal, E)}.$$
The induced left fibration $\deform[\ccal, E] \mo (\algetk)_{/k}\simeq \algetka$ is classified by a functor $\algetka\mo \bspaces$ whose restriction to artinian $\et$-algebras is denoted by 
$$\simdef_{(\ccal, E)} : \algetkart\lmo \bspaces.$$ 

For any artinian $\et$-algebra $A$ over $k$, the space $\simdef_{(\ccal, E)}(A)$ classifies $4$-uples $(\ccal_A, E_A,u,v)$ where $\ccal_A$ is a right $A$-linear $\oo$-category, $u:\ccal_A\te_A k\simeq \ccal$ is an equivalence, $E_A\in \ccal_A$ is an object and $v:E_A\te_A k\simeq A$ is an equivalence in $\ccal$. We have that $\simdef_{(\ccal, E)}(k)$ is a contractible space. 
\end{const}

The deformation functor $\simdef_{(\ccal, E)}$ is closely related to the deformation functors $\catdef_\ccal$ and $\objdef_E$. We construct the evident maps between them. 

\begin{const}\label{constmapssimdef} Let $k$ be a field, $\ccal$ a $k$-linear $\oo$-category and $E$ an object of $\ccal$. The forgetful functor $\prlkp\mo \prlk$ induces a functor $\rcatkp\mo \rcat_k$ which in turn induces a functor $\deform[\ccal, E]\mo\deform[\ccal]$ commuting with the projection to $\algetka$. We obtain natural transformations
$$\tau_{(\ccal, E)}:\simdef_{(\ccal, E)}\lmo \catdef_\ccal$$
$$\rho_{(\ccal, E)} : \simdefc_{(\ccal, E)}\lmo \catdefc_\ccal$$
given on objects by forgetting the distinguished object deformation. 

The homotopy fiber of the forgetful functor $\prlkp\mo \prlk$ at $\ccal$ is equivalent to the Kan complex $\map_{\prlk} (\modk, \ccal)\simeq \ccal^\simeq$. The induced functor $\ccal^\simeq \mo \prlkp$ is given on object by sending an object $E$ of $\ccal^\simeq$ to the pair $(\ccal, E)$. This functor induces a functor $\deform[E]\mo \deform[\ccal, E]$ commuting with the projection to $\algetka$. We therefore obtain a natural transformation 
$$\varphi_{(\ccal, E)}:\objdefet_E\lmo \simdef_{(\ccal, E)}$$
where $\objdefet$ is the restriction of the functor $\objdef_E$ to artinian $\et$-algebras over $k$ along the forgetful functor $\algetkart\mo \algkart$. The functor $\varphi_{(\ccal, E)}$ is given on object by sending a deformation $E_A$ of $E$ over an $\et$-algebra $A$ to the pair $(\ccal\te_k A, E_A)$.  Moreover, whenever $\ccal$ is compactly generated, we have that the map $\varphi_{(\ccal, E)}$ factors as a map $\objdefet_E\mo \simdefc_{(\ccal, E)}$. 
\end{const}

\begin{prop}\label{propfibseq}
Let $k$ be a field, $\ccal$ a $k$-linear $\infty$-category and $E\in \ccal$ an object. The sequence of natural transformations
$$\objdefet_E\lmo \simdef_{(\ccal, E)} \lmo \catdef_\ccal$$
from Construction \ref{constmapssimdef} is a fiber sequence in $\fun_*(\algetkart, \bspaces)$. Consequently, the functor $\simdef_{(\ccal, E)}$ is a $2$-proximate formal $\et$-moduli problem (in a larger universe). Moreover it induces a homotopy commutative diagram
$$\xymatrix{\objdefet_E\ar[r] \ar[d] & \simdef_{(\ccal, E)}\ar[d] \ar[r] &  \catdef_\ccal \ar[d] \\ \objdefeta_E\ar[r] &  \simdefa_{(\ccal, E)} \ar[r] & \catdefa_\ccal}$$
where the bottom line is a fiber sequence in $\fmpet$. 
\end{prop}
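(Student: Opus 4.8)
The plan is to establish the fiber sequence at the level of the deformation functors before passing to formal moduli problems, and then to deduce the statement about the bottom row by applying the localization functor $L$. First I would unwind the constructions: the three functors $\objdefet_E$, $\simdef_{(\ccal, E)}$ and $\catdef_\ccal$ are obtained, respectively, from the left fibrations $\deform[E]\mo \algetka$, $\deform[\ccal, E]\mo \algetka$ and $\deform[\ccal]\mo \algetka$. The forgetful maps of Construction \ref{constmapssimdef} come from the forgetful functor $\prlkp\mo \prlk$, whose fiber over a fixed $\ccal$ is the space $\ccal^\simeq$ as recalled there. The key point is therefore a \emph{fiberwise} statement: for each artinian $\et$-algebra $A$, the sequence of spaces
$$\objdefet_E(A)\lmo \simdef_{(\ccal, E)}(A) \lmo \catdef_\ccal(A)$$
should be a fiber sequence, with the fiber taken over the point of $\catdef_\ccal(A)$ given by the base deformation and its basepoint the trivial deformation. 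I would verify this by identifying the homotopy fiber of $\simdef_{(\ccal, E)}(A)\mo \catdef_\ccal(A)$ over a deformation $(\ccal_A, u)$ with the space of distinguished-object data lifting the reference object $E$, that is with the appropriate space of objects of $\ccal_A$ deforming $E$, which is precisely $\objdefet_E(A)$. This is essentially a relative-over-$\algetka$ version of the fact that for a pointed map of spaces the fiber of the forgetful map from pairs to the base is the fiber of the structure map; concretely it follows from the pullback description $\rcatkp = \algetk\times_{\alg(\prlk)}\rmod(\prlkp)$ together with the fibration $\rmod(\prlkp)\mo \rmod(\prlk)$ whose fibers are object-spaces.

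Once the fiber sequence in $\fun_*(\algetkart, \bspaces)$ is in place, the claim that $\simdef_{(\ccal, E)}$ is a $2$-proximate formal $\et$-moduli problem follows formally. Indeed, fiber sequences are stable under pullback, and $2$-proximateness is tested on pullback squares: for a small map $A_0\mo A_{01}$ the comparison map for $\simdef_{(\ccal, E)}$ sits in a map of fiber sequences with base the comparison map for $\catdef_\ccal$ (which has $0$-truncated fibers by \cite[Cor 5.3.8]{dagX}) and fiber the comparison map for $\objdefet_E$ (which has $(-1)$-truncated fibers by \cite[Cor 5.2.5]{dagX}, since $\objdef_E$ is $1$-proximate). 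Chasing the long exact sequence of homotopy groups, the fibers of the middle comparison map are $0$-truncated, so $\simdef_{(\ccal, E)}$ is $2$-proximate. I would take care that all of this happens in an enlarged universe $\V$, since $\catdef_\ccal$ only takes values in $\bspaces$.

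For the final homotopy commutative diagram, I would apply the localization $L=L_{\et}$ to the fiber sequence. The left and right vertical maps are the units $\objdefet_E\mo \objdefeta_E$ and $\catdef_\ccal\mo\catdefa_\ccal$, and the middle is the unit for $\simdef_{(\ccal, E)}$. Here is where the results of §\ref{apploop} do the real work: each of the three functors is at most $2$-proximate, so by Corollary \ref{corassfmp} the functor $L$ on the relevant proximate subcategories is computed as $B^m\Omega^m_{\mrm{prox}}$, and in particular preserves small limits, hence fiber sequences. Thus applying $L$ to the top fiber sequence yields a fiber sequence of formal $\et$-moduli problems, which is exactly the bottom row. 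The main obstacle I anticipate is the first step: making the fiberwise identification of the homotopy fiber fully precise at the level of the fibrational constructions, i.e. checking that the comparison functor $\deform[E]\mo \deform[\ccal, E]$ really exhibits $\objdefet_E(A)$ as the homotopy fiber of $\tau_{(\ccal, E)}$ over the tautological base deformation, coherently in $A$. Once that fiber sequence is secured, both the proximateness count and the compatibility with $L$ are formal consequences of the machinery already developed.
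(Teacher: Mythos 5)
Your proposal is correct and follows essentially the same route as the paper's own proof: the fiber sequence is obtained from the identification of the homotopy fiber of $\prlkp\mo\prlk$ at $\ccal$ with $\ccal^\simeq$, the $2$-proximateness of $\simdef_{(\ccal,E)}$ from a truncation count using the known $1$- and $2$-proximateness of $\objdefet_E$ and $\catdef_\ccal$, and the bottom row by applying $L$ and invoking the limit-preservation statement of Corollary \ref{corassfmp}, with the same universe-enlargement caveat. The paper's proof is simply a terser version of exactly this argument.
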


\begin{proof} 
The first assertion follows from the fact that the homotopy fiber of the functor $\prlkp\mo \prlk$ at $\ccal$ is given by the Kan complex $\ccal^\simeq$. The second assertion follows directly from the existence of the fiber sequence and the fact that $\objdefet_E$ and $\catdef_\ccal$ are $1$-proximate and $2$-proximate respectively. 
The last assertion follows from the fact that the three functors are $m$-proximate formal moduli problems for some $m$ and from the fact (after suitable enlargement of the universe) that the functor $L_m:\prox(m)_{\et}\mo \fmpet$ preserves small limits by Corollary \ref{corassfmp}. 
\end{proof}

\begin{rema}
We can already deduce from Proposition \ref{propfibseq} the tangent spectrum of the formal $\et$-moduli problem $\simdefa_\ccal$. However, to avoid dealing with the $\et$-algebra associated to the formal $\et$-moduli problem $\objdefeta_E$, we give a direct construction of the $\et$-algebra associated to $\simdefa_\ccal$, which will be useful in the sequel. 
\end{rema}

\begin{nota}\label{notacenterrel}
Let $k$ be a field, $\ccal$ a $k$-linear $\infty$-category and $E\in \ccal$ an object. Consider the object $(\ccal, E)$ of the slice $\oo$-category $\prlkp$ and its endomorphism monoidal $k$-linear $\oo$-category $\End_k(\ccal,E)^\te:=\End_{\prlkp} ((\ccal, E))^\te$. Denote by $\xi(\ccal, E)=\uend_{\End_k(\ccal,E)}(\id_{(\ccal, E)})$ the $\et$-algebra over $k$ given by endomorphisms of the unit. By adjunction, this $\et$-algebra satisfies the following universal property: for any $\et$-algebra $A$ over $k$, there exits a homotopy equivalence
$$\map_{\algetk} (A, \xi(\ccal, E))\simeq \lmod_{\lmod_A^\te} (\prlkp)^\simeq \times_{(\prlkp)^\simeq } \{(\ccal, E)\}$$
where the right hand side is the space of left $A$-linear structures on the pair $(\ccal, E)$, or in other words, the space of left $A$-linear structures on $\ccal$ acting trivially on $E$ up to equivalence.  
\end{nota}

\begin{rema}\label{remaendce}
Let $k$ be a field, $\ccal$ a $k$-linear $\infty$-category and $E\in \ccal$ an object. By definition of the slice $\oo$-category $\prlkp$, the $k$-linear $\oo$-category $\End_k(\ccal, E)$ is the homotopy fiber of the $k$-linear functor $\ev_E:\End(\ccal)\mo \ccal$ given by evaluation at $E$. We deduce that the induced sequence 
$$\xi(\ccal,E)\mo \xi(\ccal)\mo \uend_\ccal(E)$$
is a fiber sequence of $k$-module spectra. We observe that the first map in the sequence is a map of $\et$-algebras over $k$.  
\end{rema}

\begin{prop}\label{propsimdefa}
Let $k$ be a field, $\ccal$ a $k$-linear $\infty$-category and $E\in \ccal$ an object. Consider the augmented $\et$-algebra given by $k\oplus \xi(\ccal, E)$ and its associated $\et$-formal moduli problem $\Psi_{k\oplus \xi(\ccal, E)}$. Then there exists a natural transformation $\simdef_{(\ccal, E)}\mo \Psi_{k\oplus \xi(\ccal, E)}$ which is $0$-truncated. Therefore the formal $\et$-moduli problem $\simdefa_{(\ccal, E)}$ is given by the formula
$$\simdefa_{(\ccal, E)}(A)\simeq \map_{\algetk} (\dt(A), \xi(\ccal, E)).$$

\end{prop}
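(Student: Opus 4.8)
The plan is to construct the comparison map $F\colon \simdef_{(\ccal, E)}\mo \Psi_{k\oplus \xi(\ccal, E)}$ by hand and then to deduce both assertions from the $2$-proximateness recorded in Proposition \ref{propfibseq}. As the preceding remark indicates, the tangent spectrum of $\simdefa_{(\ccal, E)}$ can already be extracted from the fibre sequence of Proposition \ref{propfibseq}; the reason to construct $F$ is that it pins down the full $\et$-algebra structure on that tangent spectrum, namely $\xi(\ccal, E)$, which the fibre sequence alone does not determine.

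First I would build $F$. By the universal property of the relative centre in Notation \ref{notacenterrel}, for every artinian $\et$-algebra $A$ the space $\Psi_{k\oplus\xi(\ccal, E)}(A)\simeq \map_{\algetk}(\dt(A), \xi(\ccal, E))$ is the space of left $\dt(A)$-linear structures on the pair $(\ccal, E)$. I would send a deformation $(\ccal_A, E_A, u, v)$ to the left $\dt(A)$-action on the special fibre $(\ccal, E)$ provided by Koszul duality for module categories. This is the transcription, with $\prlk$ replaced by the pointed slice $\prlkp$ and the centre $\xi(\ccal)$ by $\xi(\ccal, E)$, of Lurie's construction of $\sigma_\ccal\colon \catdef_\ccal\mo \catdefa_\ccal\simeq \Psi_{k\oplus\xi(\ccal)}$; since $\simdef_{(\ccal, E)}$ was itself defined in Construction \ref{constsimdef} by precisely this substitution, the construction transfers. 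In the same way $F$ is compatible with the maps of Construction \ref{constmapssimdef}: it lies over $\sigma_\ccal$ along the forgetful transformation $\tau_{(\ccal, E)}$, via $\Psi$ of the augmented algebra map $k\oplus\xi(\ccal, E)\mo k\oplus\xi(\ccal)$ coming from Remark \ref{remaendce}, and its restriction along $\varphi_{(\ccal, E)}$ is the object-deformation comparison map of $E$.

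Granting this compatibility, I would prove that the induced map of formal $\et$-moduli problems $\overline{F}\colon \simdefa_{(\ccal, E)}\mo \Psi_{k\oplus\xi(\ccal, E)}$ is an equivalence by passing to tangent spectra. By Proposition \ref{propfibseq} the source fits in a fibre sequence $\objdefeta_E\mo \simdefa_{(\ccal, E)}\mo \catdefa_\ccal$, and by Remark \ref{remaendce} the target fits over $\Psi_{k\oplus\xi(\ccal)}$ in a fibre sequence whose fibre has tangent spectrum $\uend_\ccal(E)[1]$; the map $\overline{F}$ relates the two. On tangent spectra this gives a map between fibre sequences of $k$-modules with common outer terms $\uend_\ccal(E)[1]$ and $\xi(\ccal)[2]$. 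The map on $\xi(\ccal)[2]$ is an equivalence because $\catdefa_\ccal\simeq\Psi_{k\oplus\xi(\ccal)}$ (§\ref{sscatdef}), and the map on $\uend_\ccal(E)[1]$ is an equivalence because $\objdef_E$ is $1$-proximate, so that its comparison map is an equivalence on tangent spectra (§\ref{ssobjdef} and \cite[Thm 5.2.8]{dagX}). By the five lemma the induced map $T_{\simdefa_{(\ccal, E)}}\mo \xi(\ccal, E)[2]$ is an equivalence, whence $\overline{F}$ is an equivalence by \cite[Prop 1.2.10]{dagX}. This gives the formula $\simdefa_{(\ccal, E)}(A)\simeq \map_{\algetk}(\dt(A), \xi(\ccal, E))$.

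Finally the $0$-truncatedness of $F$ falls out: since $\simdef_{(\ccal, E)}$ is a $2$-proximate formal $\et$-moduli problem (Proposition \ref{propfibseq}), its unit $\simdef_{(\ccal, E)}\mo \simdefa_{(\ccal, E)}$ is $0$-truncated by \cite[Thm 5.1.9]{dagX}, and $F$ is this unit followed by the equivalence $\overline{F}$, hence is itself $0$-truncated. I expect the whole difficulty to sit in the first step, namely producing $F$ as a coherent natural transformation of $\infty$-functors together with the compatibilities along $\tau_{(\ccal, E)}$ and $\varphi_{(\ccal, E)}$ that license the tangent-space bookkeeping above. Once that is in place, the identification of $\simdefa_{(\ccal, E)}$ and the truncation estimate are formal consequences of the proximateness facts already established for $\objdef_E$ and $\catdef_\ccal$.
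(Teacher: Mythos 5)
Your overall architecture inverts the paper's: the paper constructs $F$ exactly as you do (by transcribing \cite[Const 5.3.18]{dagX} to the pointed slice $\prlkp$), but then proves the $0$-truncatedness of $F$ \emph{directly}, by transcribing the proof of \cite[Prop 5.3.17]{dagX} with $\xi(\ccal)$ replaced by $\xi(\ccal,E)$, and only afterwards invokes \cite[Thm 5.1.9]{dagX} to get that $\overline{F}$ is an equivalence. You run the two steps in the opposite order; your final deduction (the unit of $L$ is $0$-truncated by $2$-proximateness from Proposition \ref{propfibseq} together with \cite[Thm 5.1.9]{dagX}, and $F$ is this unit followed by the equivalence $\overline{F}$) is correct as such. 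The problem sits in your middle step, and it is not merely the coherence bookkeeping you flag.

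Your five-lemma argument needs the induced map on fibres $h\colon \objdefeta_E \mo G:=\mrm{fib}\bigl(\Psi_{k\oplus \xi(\ccal,E)}\mo \Psi_{k\oplus \xi(\ccal)}\bigr)$ to be an equivalence on tangent spectra, and the justification you give --- that $\objdef_E$ is $1$-proximate, so its comparison map is an equivalence on tangent spectra --- concerns a \emph{different} map: it says the unit $\objdefet_E\mo \objdefeta_E=L(\objdefet_E)$ becomes an equivalence after applying $L$. It does not identify $Th$. Knowing that $T\objdefeta_E$ and $TG$ are both abstractly equivalent to $\uend_\ccal(E)[1]$ is not enough for the five lemma; you need the specific map $h$ arising from your commutative diagram to be an equivalence, i.e.\ you must identify the fibre sequence of Proposition \ref{propfibseq} with the one obtained from Remark \ref{remaendce} by applying $\Psi$. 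Note also that your compatibility statement ``the restriction of $F$ along $\varphi_{(\ccal,E)}$ is the object-deformation comparison map'' does not even typecheck as written: $F\circ \varphi_{(\ccal,E)}$ lands in $\Psi_{k\oplus \xi(\ccal,E)}$, while the comparison map of $E$ lands in $\objdefeta_E$, so formulating the compatibility already presupposes an identification $G\simeq \objdefeta_E$ --- which is precisely what is needed. This identification is exactly what the paper's Remark \ref{remaobsmap} says ``can be seen \dots although we will not use this fact'': it requires unwinding the connecting map $\Omega\catdefa_\ccal\mo \objdefeta_E$ and checking that Lurie's equivalence $\catdefa_\ccal\simeq \Psi_{k\oplus\xi(\ccal)}$ intertwines it with the evaluation map $\chi_E\colon \xi(\ccal)\mo \uend_\ccal(E)$; that is genuine obstruction-theoretic work, not a formal consequence of the results you cite. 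So your route can in principle be completed, but only by proving this identification, whose difficulty is comparable to --- and is deliberately avoided by --- the paper's direct transcription of \cite[Prop 5.3.17]{dagX} for $0$-truncatedness.
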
 

\begin{proof} 
We mimic the proof of \cite[Thm 5.3.16]{dagX}, replacing the center $\xi(\ccal)$ by $\xi(\ccal, E)$. Following the same lines as in \cite[Const 5.3.18]{dagX}, we see that there exits a homotopy commutative diagram 
$$\xymatrix{ \deform[\ccal,E] \ar[r]^-{ } \ar[d]_-{ } & (\algetka)_{/\xi(\ccal, E)}  \ar[d]^-{ } \\  (\algetka)^\op \ar[r]^-{\dt } & \algetka  }$$
where the upper functor sends a deformation $(\ccal_A, E_A)$ over an augmented $\et$-algebra $A$ to $\ccal$ seen as left tensored over $\dt(A)$ via the equivalence $(\dt(A)\te_k \ccal_A)\te_{\dt(A)\te_k A} k \simeq \ccal$ together with the distinguished object $E$. This functor induces a natural transformation 
$$\simdef_{(\ccal, E)}\mo \Psi_{k\oplus \xi(\ccal, E)}.$$
The same proof as \cite[Prop 5.3.17]{dagX} but replacing the center $\xi(\ccal)$ by the $\et$-algebra $\xi(\ccal, E)$ shows that this map is $0$-truncated. Finally \cite[Thm 5.1.9]{dagX} implies that the induced map $\simdefa_{(\ccal, E)}\mo \Psi_{k\oplus \xi(\ccal, E)}$ is an equivalence. 
\end{proof}

\begin{rema}
In the situation of Proposition \ref{propsimdefa}, the universal property of the $\et$-algebra $\xi(\ccal, E)$ (see Notation \ref{notacenterrel}) implies that we have for each artinian $\et$-algebra $A$ over $k$ a natural homotopy equivalence
$$\simdefa_{(\ccal, E)}(A)\simeq \lmod_{\lmod_{\dt(A)}^\te} (\prlkp)^\simeq \times_{(\prlkp)^\simeq } \{(\ccal, E)\}$$
with the space of left $\dt(A)$-linear structures on the $\oo$-category $\ccal$ such that $\dt(A)$ acts trivially on the object $E$. 
\end{rema}

\begin{rema}\label{remaobsmap}
Let $k$ be a field, $\ccal$ a $k$-linear $\infty$-category and $E\in \ccal$ an object. Recall from Proposition \ref{propfibseq} the existence of a fiber sequence of formal $\et$-moduli problems 
$$\objdefeta_E\lmo \simdefa_{(\ccal, E)}\lmo \catdefa_\ccal.$$
It induces a fiber sequence 
$$\Omega \simdefa_{(\ccal, E)} \lmo \Omega \catdefa_\ccal \lmo \objdefeta_E$$
which after taking the tangent spectrum gives a fiber sequence of $k$-module spectra  
$$\xi(\ccal, E)\lmo \xi(\ccal)\lmo \uend_\ccal(E).$$
It can be seen that for each artinian $\et$-algebra $A$ over $k$, the map $\Omega \catdefa_\ccal(A) \mo \objdefeta_E(A)$ sends an $A$-linear autoequivalence $f_A: \ccal\te_k A\simeq \ccal\te_k A$ of the trivial deformation to the image $f_A(E\te_k A)$ of the trivial deformation of $E$ in $\ccal\te_k A$. From this we can deduce that the fiber sequence of tangent spectra above is equivalent to the fiber sequence given in Remark \ref{remaendce}, although we will not use this fact. We denote by $\chi_E : \xi(\ccal)\mo \uend_\ccal (E)$ the map of $k$-modules induced by evaluation at $E$ and call it the obstruction map of $E$. 
\end{rema}

\begin{rema}\label{remaobs}
Let $k$ be a field, $\ccal$ a $k$-linear $\infty$-category and $E\in \ccal$ an object. Consider the homotopy commutative diagram

$$\xymatrix{ \catdef_\ccal(k\oplus k[m])  \ar[r]^-{ } \ar[d]_-{ \sigma_\ccal} & \Omega\catdef_\ccal(k\oplus k[m+1] ) \ar[d]^-{ } \ar[r] & \objdefet_E(k\oplus k[m+1]) \ar[d] \\  \catdefa _\ccal(k\oplus k[m])  \ar[r]^-{\sim } & \Omega\catdefa_\ccal(k\oplus k[m+1] ) \ar[r] & \objdefeta_E(k\oplus k[m+1]) }$$

where the right square is induced by Proposition \ref{propfibseq}. The bottom left map is an equivalence because $\catdefa_\ccal$ is a formal moduli problem. By \cite[Thm 3.5.1]{dagX}, there exists an isomorphism $\pi_0\catdefa _\ccal(k\oplus k[m])\simeq \hh^{m+2}(\ccal)$. Moreover by \cite[Thm 5.2.8]{dagX} there is an isomorphism $\pi_0  \objdefa_E(k\oplus k[m+1])\simeq \extg^{m+2}_\ccal(E,E)$. From Proposition \ref{propfibseq} and taking $\pi_0$, we have an induced commutative diagram
$$\xymatrix{ \pi_0\simdef_{(\ccal, E)}(k\oplus k[m])\ar[r]^-{\alpha} \ar[d] & \pi_0\catdef_\ccal(k\oplus k[m])  \ar[r]^-{ } \ar[d]_-{ \sigma_\ccal} & \pi_0 \objdefet_E(k\oplus k[m+1]) \ar[d] \\   \pi_0\simdefa_{(\ccal, E)}(k\oplus k[m])\ar[r] &\hh^{m+2}(\ccal) \ar[r]^-{\chi_E} & \extg^{m+2}_\ccal(E,E) }$$
where the bottom line is exact in the middle. Let $\ccal_1$ be a deformation of $\ccal$ over $k\oplus k[m]$ and let $\phi=\sigma_\ccal(\ccal_1)$. Suppose $E_1\in \ccal_1$ deforms $E$ or in other words that $E_1\te_{k\oplus k[m]} k\simeq E$ in $\ccal_1\te_{k\oplus k[m]} k\simeq \ccal$. In this case we have $\alpha(\ccal_1, E_1)=\ccal_1$ and thus $\chi_E(\phi)=0$. 
\end{rema}

\begin{ex}\label{exarigid}
We can now explain a simple example in which $\catdef_\ccal$ fails to be a formal moduli problem. It is taken from \cite[Ex 3.14]{kellow}. Let $k$ be a field of characteristic zero and $A=k[u,u^{-1}]$ the free graded commutative $k$-algebra generated by a degree $2$ variable $u$ and its inverse over $k$. We regard $A$ as an $\ei$-algebra and let $\ccal=\Mod_A$ be the $k$-linear $\infty$-category of $A$-module spectra. A computation gives $\hh^2(\ccal)\simeq k$ generated by the multiplication by $u$ whose corresponding cocycle is denoted by $\phi_u$. Then for every non zero $A$-module $M$, we have that $\chi_M(\phi_u)\neq 0$ in $\extg^2_A(M,M)$ because multiplication by $u$ on $M$ is an equivalence. Suppose now that $\phi_u$ is the image by the map $\sigma_\ccal:\pi_0\catdef_\ccal(k[t]/t^2)\mo \hh^2(\ccal)$ of a deformation $\ccal_1$ of $\ccal$ over $k[t]/t^2$. Then any $M_1\neq 0$ in $\ccal_1$ satisfies $M=M_1\te_{k[t]/t^2} k\neq 0$ in $\ccal$, and by Remark \ref{remaobs} we have $\chi_M(\phi_u)=0$, which is a contradiction. We deduce that $\phi_u$ is not in the image of the map $\sigma_\ccal:\pi_0\catdef_\ccal(k[t]/t^2)\mo \hh^2(\ccal)$ and that $\catdef_\ccal$ is not a formal $\et$-moduli problem. 
\end{ex}

\begin{nota}\label{notadefoc}
Let $k$ be a field, $\ccal$ a compactly generated $k$-linear $\infty$-category and $E\in \ccal$ an object. For each artinian $\et$-algebra $A$ over $k$ we denote by $\simdefc_{(\ccal, E)}(A)$ the summand of $\simdef_{(\ccal, E)}(A)$ consisting of $4$-uplets $(\ccal_A,u,E_A,v)$ with $\ccal_A$ being compactly generated. We obtain a functor 
$$\simdefc_{(\ccal, E)}:\algetkart\mo \bspaces.$$
\end{nota}

\begin{prop}\label{propsimdefc}
Let $k$ be a field, $\ccal$ a compactly generated $k$-linear $\infty$-category and $E\in \ccal$ an object. The functor $\simdefc_{(\ccal, E)}$ is a $2$-proximate $\et$-moduli problem and the inclusion $\simdefc_{(\ccal, E)}\mo \simdef_{(\ccal, E)}$ induces an equivalence of formal $\et$-moduli problems $L(\simdefc_{(\ccal, E)})\simeq \simdefa_{(\ccal, E)}$. If moreover $\ccal$ is supposed to be tamely compactly generated, then $\simdefc_{(\ccal, E)}$ is a $1$-proximate formal $\et$-moduli problem. 
\end{prop}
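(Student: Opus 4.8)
The plan is to deduce everything from the observation that $\simdefc_{(\ccal, E)}$ is the full preimage of $\catdefc_\ccal$ under the forgetful transformation $\tau_{(\ccal, E)}$ of Construction \ref{constmapssimdef}. Concretely, by Notation \ref{notadefoc} the space $\simdefc_{(\ccal, E)}(A)$ is the union of those connected components of $\simdef_{(\ccal, E)}(A)$ lying over the summand $\catdefc_\ccal(A)\subseteq \catdef_\ccal(A)$, so that the square
$$\xymatrix{ \simdefc_{(\ccal, E)} \ar[r] \ar[d] & \simdef_{(\ccal, E)} \ar[d]^-{\tau_{(\ccal, E)}} \\ \catdefc_\ccal \ar[r] & \catdef_\ccal }$$
is a pullback in $\fun_*(\algetkart, \bspaces)$. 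Since the embedding $\catdefc_\ccal\mo \catdef_\ccal$ has $0$-truncated fibers (see §\ref{sscatdef}), its base change $\simdefc_{(\ccal, E)}\mo \simdef_{(\ccal, E)}$ has $0$-truncated fibers over every basepoint. Feeding this pullback into the fiber sequence $\objdefet_E\mo \simdef_{(\ccal, E)}\mo \catdef_\ccal$ of Proposition \ref{propfibseq}, and using that fibers are stable under base change, I obtain a refined fiber sequence
$$\objdefet_E\lmo \simdefc_{(\ccal, E)}\lmo \catdefc_\ccal$$
(here it is used that the trivial deformation $\ccal$ is compactly generated, so the basepoint lands in $\catdefc_\ccal$).

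For the $2$-proximate claim and the computation of the localization I would argue as in \cite[Variant 5.3.6]{dagX}. Since $\simdefc_{(\ccal, E)}\mo \simdef_{(\ccal, E)}$ is $0$-truncated and $\simdef_{(\ccal, E)}$ is a $2$-proximate formal $\et$-moduli problem (Proposition \ref{propfibseq}), a comparison of the maps $X(A)\mo X(A_0)\times_{X(A_{01})} X(A_1)$ for $X=\simdefc_{(\ccal, E)}$ and $X=\simdef_{(\ccal, E)}$ through the $0$-truncated vertical maps shows, by closure of $0$-truncated maps under these limits, that $\simdefc_{(\ccal, E)}$ is again $2$-proximate. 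To identify $L(\simdefc_{(\ccal, E)})$ I would invoke Remark \ref{remaproxom}: a $0$-truncated, i.e.\ $(2-2)$-truncated, map becomes an equivalence after $\Omega^2$, so $\Omega^2\simdefc_{(\ccal, E)}\simeq \Omega^2\simdef_{(\ccal, E)}$. Applying the formula $L_2\simeq B^2\Omega^2_{\mrm{prox}}$ of Corollary \ref{corassfmp}, valid on $2$-proximate functors for the deformation context of augmented $\et$-algebras (which is pointed and admits a deformation theory), I conclude
$$L(\simdefc_{(\ccal, E)})\simeq B^2\Omega^2\simdefc_{(\ccal, E)}\simeq B^2\Omega^2\simdef_{(\ccal, E)}\simeq L(\simdef_{(\ccal, E)})=\simdefa_{(\ccal, E)}.$$

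For the last statement, assume $\ccal$ is tamely compactly generated. Then $\catdefc_\ccal$ is $1$-proximate by \cite[Prop 5.3.21]{dagX}, and $\objdefet_E$ is $1$-proximate (see §\ref{ssobjdef}). By \cite[Thm 5.1.9]{dagX}, $1$-proximateness is equivalent to the unit map being $(-1)$-truncated, so both $\objdefet_E\mo \objdefeta_E$ and $\catdefc_\ccal\mo \catdefa_\ccal$ are $(-1)$-truncated. Applying the natural transformation $\mrm{id}\mo L$ to the refined fiber sequence above and using that $L$ preserves these fiber sequences (Corollary \ref{corassfmp}), I get a map onto the localized fiber sequence $\objdefeta_E\mo \simdefa_{(\ccal, E)}\mo \catdefa_\ccal$ of Proposition \ref{propfibseq} whose two outer vertical maps are $(-1)$-truncated. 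Taking vertical fibers, and using that fibers commute with fibers, the middle vertical fiber sits in a fiber sequence whose outer terms are $(-1)$-truncated, hence is itself $(-1)$-truncated; thus $\simdefc_{(\ccal, E)}\mo \simdefa_{(\ccal, E)}$ is $(-1)$-truncated and, by \cite[Thm 5.1.9]{dagX} again, $\simdefc_{(\ccal, E)}$ is $1$-proximate.

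The main obstacle I anticipate is the truncatedness bookkeeping in the tame case: the based fiber sequences control the comparison maps only over the component of the trivial deformation, whereas $m$-proximateness is a statement about all homotopy fibers. The remedy is to reduce the condition to the elementary testing squares $A\mo \ast_\A$, $A'\mo E_\alpha[n]$ permitted by \cite[Prop 1.1.15]{dagX}, where every relevant space is canonically based at the trivial deformation; the $2$-proximate step is smoother precisely because base change preserves $0$-truncatedness over all points at once. The conceptual core, however, is the identification $L(\simdefc_{(\ccal, E)})\simeq \simdefa_{(\ccal, E)}$, where the new formula $L_2\simeq B^2\Omega^2_{\mrm{prox}}$ of Corollary \ref{corassfmp} makes the argument transparent.
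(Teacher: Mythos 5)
Your proposal is correct at the same level of rigor as the paper's own proof, and its skeleton is the paper's skeleton: the refined fiber sequence $\objdefet_E\mo \simdefc_{(\ccal, E)}\mo \catdefc_\ccal$, the known proximateness of the outer terms, and Corollary \ref{corassfmp}. But you distribute the work differently, and the difference is worth recording. The paper obtains the refined fiber sequence as ``a variant of Proposition \ref{propfibseq}'' and reads \emph{all three} assertions off the resulting two-row diagram; you instead derive the sequence by base change along the summand inclusion $\catdefc_\ccal\subseteq \catdef_\ccal$ (correctly noting that the basepoint lies in $\catdefc_\ccal$ because $\ccal$ is compactly generated), and then prove the first two assertions without using the fiber sequence at all: $2$-proximateness comes from the $0$-truncated (in fact monomorphic) inclusion $\simdefc_{(\ccal, E)}\mo \simdef_{(\ccal, E)}$ into the already-established $2$-proximate functor $\simdef_{(\ccal, E)}$, which is exactly the mechanism of \cite[Variant 5.3.6]{dagX}, and the identification $L(\simdefc_{(\ccal, E)})\simeq \simdefa_{(\ccal, E)}$ comes from $\Omega^2\simdefc_{(\ccal, E)}\simeq \Omega^2\simdef_{(\ccal, E)}$ (Remark \ref{remaproxom}) combined with $L_2\simeq B^2\Omega^2_{\mrm{prox}}$. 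This variant buys something real: a monomorphism argument controls \emph{all} homotopy fibers of the comparison maps at once, so for these two assertions you avoid the basepoint bookkeeping that the paper's fiber-sequence reading silently suppresses (a based fiber sequence only controls fibers over the component of the trivial deformation).

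For the tame case you revert to precisely the paper's principle---outer terms of the fiber sequence become $(-1)$-truncated over their localizations, hence so does the middle one---and there your argument inherits exactly the gap of the paper's one-line deduction, which you are right to flag. However, your proposed remedy does not repair it: reducing to the elementary test squares of \cite[Prop 1.1.15]{dagX} is a statement about which squares detect the formal moduli condition, whereas $(-1)$-truncatedness of the unit map $\simdefc_{(\ccal, E)}\mo \simdefa_{(\ccal, E)}$ (equivalently $1$-proximateness via \cite[Thm 5.1.9]{dagX}) still quantifies over \emph{all} homotopy fibers, including those over components that the distinguished point never reaches; a map of fiber sequences with $(-1)$-truncated outer maps need not have $(-1)$-truncated middle map in general. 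The honest repair---needed equally by the paper---is to re-center: the fiber of $\simdefc_{(\ccal, E)}(A)\mo \catdefc_\ccal(A)$ over an arbitrary deformation $\ccal_A$ is again a space of object deformations (of $E$ into $\ccal_A$), whose comparison maps are $(-1)$-truncated by the module-theoretic argument behind \cite[Cor 5.2.5]{dagX}, so the truncation estimate holds fiberwise over every component, not just the distinguished one.
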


\begin{proof} 
A variant of Proposition \ref{propfibseq} implies the existence of a homotopy commutative diagram 
$$\xymatrix{\objdefet_E\ar[r] \ar[d] & \simdefc_{(\ccal, E)}\ar[d] \ar[r] &  \catdefc_\ccal \ar[d] \\ \objdefeta_E\ar[r] &  \simdef^{c,\wedge}_{(\ccal, E)} \ar[r] & \catdefa_\ccal}$$
where the two horizontal top maps are constructed in Construction \ref{constmapssimdef}. This diagram implies the first two assertions. The last one follows from this diagram and the fact that if $\ccal$ is tamely compactly generated then by \cite[Prop 5.3.21]{dagX} the functor $\catdefc_\ccal$ is $1$-proximate. 
\end{proof}

\begin{rema}
Let $k$ be a field, $\ccal$ a tamely compactly generated $k$-linear $\infty$-category and $E\in \ccal$ an object. Let $m\geq 0$ be an integer and consider the following commutative diagram induced by the commutative diagram of the proof of Proposition \ref{propsimdefc}, similarly to Remark \ref{remaobs}.
$$\xymatrix{ \pi_0\simdefc_{(\ccal, E)}(k\oplus k[m])\ar[r] \ar[d] & \pi_0\catdefc_\ccal(k\oplus k[m])  \ar[r]^-{} \ar[d]_-{ \theta_\ccal} & \pi_0 \objdefet_E(k\oplus k[m+1]) \ar[d] \\   \pi_0\simdefa_{(\ccal, E)}(k\oplus k[m])\ar[r] &\hh^{m+2}(\ccal) \ar[r]^-{\chi_E} & \extg^2_\ccal(E,E) }$$
The bottom line is exact in the middle. All vertical maps are injective because when $\ccal$ is tamely compactly generated, the functors $\simdefc_{(\ccal, E)}$ and $\catdefc_\ccal$ are $1$-proximate and moreover $\objdefet_E$ is $1$-proximate. This implies that the upper line is also exact in the middle, as a sequence of pointed sets. We deduce from this that given a compactly generated deformation $\ccal_1$ of $\ccal$ over $k\oplus k[m]$, there exists an object $E_1\in \ccal_1$ deforming $E$ if and only if $\chi_E(\theta_\ccal(\ccal_1))=0$. 
\end{rema}


\subsection{Deformations of associative algebras}\label{ssass}

We recall some known facts about the deformation functor of an associative algebra and construct maps to the formal $\et$-moduli problem of its $\oo$-category of left modules and from the formal $\et$-moduli problem of simultaneous deformations of pairs $(\ccal, E)$ given by taking endomorphisms. 

\begin{const}\label{constalgdef}
Consider the functor $\algetk\mo \alg(\prlk)$ given on objects by $A\mapsto\lmod_A^\te$ (see \cite[Not 5.3.1]{dagX} or §\ref{sscatdef}). For an $\et$-algebra $A$ over $k$, we denote by $\alg_A=\alg(\lmod_A^\te)$ the presentable $\oo$-category of $A$-algebras. Composing the functor $\algetk\mo \alg(\prlk)$ with the functor $\alg : \alg(\prlk)\mo \prl$ given by taking algebra objects, we obtain a functor $\algetk\mo \prl$ sending an $\et$-algebra $A$ to the $\oo$-category $\alg_A$ of $A$-algebras. This functor is classified by a cocartesian fibration $s:\alg\mo \algetk$ with fiber over an $\et$-algebra $A$ given by the $\oo$-category $\alg_A$. Consider the subcategory of $s$-cocartesian morphisms $\alg^{\mrm{cocart}}$ and the induced left fibration $\alg^{\mrm{cocart}}\mo \algetk$. 

Let $B$ be an $\eo$-algebra over $k$. The $\oo$-category of deformation of $B$ is the $\oo$-category $\deform[B]=(\alg^{\mrm{cocart}})_{/(k,B)}$. The induced left fibration $\deform[B]\mo \algetka$ is classified by a functor $\algetka\mo \bspaces$ whose restriction to artinian $\et$-algebras is denoted by 
$$\algdef_B : \algetkart\lmo \bspaces$$
For any artinian $\et$-algebra $A$, the space $\algdef_B(A)$ classifies pairs $(B_A,u)$ with $B_A$ an $A$-algebra and $u:B_A\te_A k \simeq B$ an equivalence of $\eo$-algebras over $k$. 
\end{const} 

\begin{prop}\label{propalgdefprox}
Let $k$ be a field and $B$ an $\eo$-algebra over $k$. For every pullback diagram 
$$\xymatrix{ A \ar[r]^-{ } \ar[d]_-{ } & A_1 \ar[d]^-{ } \\ A_0 \ar[r]^-{ } & A_{01} }$$
in $\algetkart$, the induced map 
$$\algdef_B(A)\mo \algdef_B(A_0)\times_{\algdef_B(A_{01})} \algdef_B(A_1)$$ 
has $(-1)$-truncated homotopy fibers (or equivalently induces a homotopy equivalence onto its essential image). 
\end{prop}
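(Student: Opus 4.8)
The plan is to identify $\algdef_B$ with the fibers of the space of $\eo$-algebra objects over the various artinian bases, and to deduce the $(-1)$-truncation from the full faithfulness of a single base-change functor. By Construction \ref{constalgdef}, the left fibration $\alg^{\mrm{cocart}}\mo \algetk$ classifies the functor $A\mapsto \alg_A^\simeq$ (the maximal subgroupoid of $\alg_A=\alg(\lmod_A^\te)$), and for an augmented artinian algebra $A\mo k$ the space $\algdef_B(A)$ is the homotopy fiber over $B$ of the base-change map $\alg_A^\simeq\mo \alg_k^\simeq$. Since the four base-change-to-$k$ maps attached to the square are compatible, and since both $(-)^\simeq$ and the formation of homotopy fibers commute with limits, the target $\algdef_B(A_0)\times_{\algdef_B(A_{01})}\algdef_B(A_1)$ is the homotopy fiber over $B$ of the map $\bigl(\alg_{A_0}^\simeq\times_{\alg_{A_{01}}^\simeq}\alg_{A_1}^\simeq\bigr)\mo \alg_k^\simeq$. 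As $(-1)$-truncated maps are stable under base change, it then suffices to prove that the comparison map $\alg_A^\simeq\mo \alg_{A_0}^\simeq\times_{\alg_{A_{01}}^\simeq}\alg_{A_1}^\simeq$ is $(-1)$-truncated; and because a fully faithful functor induces a $(-1)$-truncated map on maximal subgroupoids, this reduces to showing that the underlying functor $\Phi:\alg_A\mo \alg_{A_0}\times_{\alg_{A_{01}}}\alg_{A_1}$ is fully faithful.

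Next I would reduce the full faithfulness of $\Phi$ to a statement about modules. The base-change functors $\lmod_A^\te\mo \lmod_{A_i}^\te$ are monoidal, so they assemble into a monoidal functor $G$ from $\lmod_A^\te$ to the fiber product of the monoidal $\oo$-categories $\lmod_{A_i}^\te$. Since $\alg(-)$ preserves limits, applying it to $G$ recovers $\Phi$ through the canonical equivalence $\alg\bigl(\lmod_{A_0}\times_{\lmod_{A_{01}}}\lmod_{A_1}\bigr)\simeq \alg_{A_0}\times_{\alg_{A_{01}}}\alg_{A_1}$. Because mapping spaces between algebra objects are computed as limits of mapping spaces of the underlying objects, a monoidal functor whose underlying functor is fully faithful induces a fully faithful functor on algebra objects. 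It therefore suffices to prove that the underlying functor of $G$, namely $\lmod_A\mo \lmod_{A_0}\times_{\lmod_{A_{01}}}\lmod_{A_1}$, is fully faithful.

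This last full faithfulness is the only computational point, and I would establish it directly. For $A$-modules $M,N$, the adjunction between base change and restriction identifies the mapping space in the fiber product with $\map_A(M, N_0\times_{N_{01}}N_1)$, where $N_i:=A_i\te_A N$; hence full faithfulness amounts to the assertion that the canonical map $N\mo N_0\times_{N_{01}}N_1$ is an equivalence for every $A$-module $N$. To prove this, note that the square exhibits $A$ as the pullback $A_0\times_{A_{01}}A_1$, and that the forgetful functor $\lmod_A\mo\modk$ is conservative and preserves limits, hence reflects them; it follows that $A\mo A_0\oplus A_1\mo A_{01}$ is a fiber sequence of $A$-modules. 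The functor $(-)\te_A N$, being a colimit-preserving functor between stable $\oo$-categories, is exact, so it carries this sequence to the fiber sequence $N\mo N_0\oplus N_1\mo N_{01}$, which is precisely the equivalence $N\simeq N_0\times_{N_{01}}N_1$. Notably this uses only that the square is a pullback, which explains why the conclusion holds for \emph{every} pullback diagram, with no surjectivity hypothesis on $\pi_0$.

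I expect the genuine difficulty to reside not in the module computation, which is short, but in the organizational steps: pinning down the identification of $\algdef_B$ with the fibers of $A\mapsto \alg_A^\simeq$ compatibly across the whole square, and justifying that full faithfulness passes from the monoidal functor $G$ to the functor $\Phi$ on algebra objects. This argument is the associative-algebra analogue of the fact that the object-deformation functor $\objdef_E$ is $1$-proximate (see §\ref{ssobjdef}); should one prefer, the module-level full faithfulness can instead be cited from the gluing results of \cite{ha}, after which the proposition follows formally from the stability of $(-1)$-truncated maps under base change together with the preservation of limits by $(-)^\simeq$ and by $\alg(-)$.
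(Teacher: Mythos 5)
Your proof is correct, and its skeleton coincides with the paper's: both reduce the $(-1)$-truncatedness of $\algdef_B(A)\mo \algdef_B(A_0)\times_{\algdef_B(A_{01})}\algdef_B(A_1)$ to full faithfulness of $\alg_A\mo \alg_{A_0}\times_{\alg_{A_{01}}}\alg_{A_1}$, and deduce the latter from full faithfulness at the level of module $\oo$-categories together with the fact that $\alg(-)$ preserves small limits and fully faithful functors. The genuine difference lies in the key input: the paper simply cites \cite[Thm 7.4]{dagIX} for the full faithfulness of $\lmod_A\mo \lmod_{A_0}\times_{\lmod_{A_{01}}}\lmod_{A_1}$ (your closing aside attributes this to \cite{ha}, but the paper's actual source is \cite{dagIX}), whereas you prove it: by adjunction it reduces to showing that $N\mo N_0\times_{N_{01}}N_1$ is an equivalence for every left $A$-module $N$, which you obtain by tensoring the fiber sequence $A\mo A_0\oplus A_1\mo A_{01}$ with $N$ and invoking exactness of $(-)\te_A N$. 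This argument is correct --- it is in essence Lurie's own proof of the cited theorem --- and it buys two things the citation hides: self-containedness, and a transparent explanation of why no $\pi_0$-surjectivity hypothesis is needed here (only cartesianness of the square enters), in contrast with Proposition \ref{propalgdefnfmp}, where the essential-surjectivity part of the gluing statement does require such a hypothesis. Your front-end bookkeeping (identifying $\algdef_B(A)$ with the fiber of $\alg_A^\simeq\mo \alg_k^\simeq$ over $B$, commuting fiber formation with the pullback, and using stability of $(-1)$-truncated maps under base change) is likewise correct and usefully fills in what the paper compresses into ``which implies our assertion.'' The one convention you should state explicitly --- and on which both your fiber-sequence step and the paper's citation rely --- is that a ``pullback diagram in $\algetkart$'' means a diagram of artinian algebras that is cartesian in the ambient $\oo$-category $\algetk$, equivalently on underlying $k$-module spectra.
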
 

\begin{proof} 
By \cite[Thm 7.4]{dagIX}, for every such pullback square in $\algetkart$, the induced functor 
$$\lmod_A\mo \lmod_{A_0}\times_{\lmod_{A_{01}}} \lmod_{A_1}$$
is fully faithful. Because the functor $\alg: \alg(\prl)\mo \prl$ given by applying algebra objects preserves small limits and preserves fully faithful functors, the induced functor
$$\alg_A\mo \alg(\lmod_{A_0}\times_{\lmod_{A_{01}}} \lmod_{A_1})\simeq \alg_{A_0}\times_{\alg_{A_{01}}} \alg_{A_1}$$
is fully faithful, which implies our assertion. 
\end{proof}

\begin{cor}\label{coralgdefsm}
Let $k$ be a field and $B$ an $\eo$-algebra over $k$. For every artinian $\et$-algebra $A$ over $k$, the space $\algdef_B(A)$ is essentially small. Therefore $\algdef_B$ can regarded as a functor with values in the $\oo$-category $\spaces$ of small spaces. 
\end{cor}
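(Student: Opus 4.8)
The plan is to realize $\algdef_B$ as a $1$-proximate formal $\et$-moduli problem and then to bound its values by embedding it into the associated formal moduli problem. First I would check that $\algdef_B(k)$ is contractible: a deformation of $B$ over $k$ is an $\eo$-algebra $B_A$ equipped with an equivalence $B_A\te_k k\simeq B$, and the space of such pairs is contractible. Together with Proposition \ref{propalgdefprox}, which says that every comparison map $\algdef_B(A)\mo \algdef_B(A_0)\times_{\algdef_B(A_{01})}\algdef_B(A_1)$ has $(-1)$-truncated fibers, this identifies $\algdef_B$ as a $1$-proximate formal $\et$-moduli problem in the sense of \cite[Def 5.1.5]{dagX} (a priori valued in the $\oo$-category $\bspaces$ of not necessarily small spaces).

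Next I would invoke \cite[Thm 5.1.9]{dagX}: being $1$-proximate is equivalent to the unit map $\algdef_B\mo L(\algdef_B)=\algdefa_B$ being $(-1)$-truncated. Thus for each artinian $\et$-algebra $A$ the map $\algdef_B(A)\mo \algdefa_B(A)$ is a monomorphism of spaces, exhibiting $\algdef_B(A)$ as a union of path components of $\algdefa_B(A)$. Since any space admitting a monomorphism into an essentially small space is itself essentially small, the statement reduces to the assertion that the formal $\et$-moduli problem $\algdefa_B$ takes essentially small values.

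To prove this reduced assertion without circularity I would compute the tangent complex of $\algdefa_B$ directly from its values on the shifted square-zero extensions $k\oplus k[m]$. By Lemma \ref{lemproxfmp} applied with $m=1$ we have $\Omega\algdef_B\simeq \Omega\algdefa_B$, so the tangent spectrum of $\algdefa_B$ can be read off from $\algdef_B$ evaluated on the $k\oplus k[m]$. These values are deformations of $B$ over the trivial square-zero extensions, which are governed by the Hochschild cochain complex of $B$; the resulting tangent complex is a shift of the $k$-module spectrum $\hh^*(B)\simeq \uend_{\End_k(\lmod_B)}(\id_{\lmod_B})$, which is small because $B$ is a small $\eo$-algebra over the field $k$. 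Consequently $\algdefa_B$ is classified by an augmented $\et$-algebra with small underlying spectrum, and for every artinian $A$ its value $\algdefa_B(A)\simeq \map_{\algetk}(\dt(A),\hh^*(B))$ is an essentially small mapping space. Combining with the previous paragraph, $\algdef_B(A)$ is essentially small for every $A$, so $\algdef_B$ factors through $\spaces$.

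The step I expect to be the main obstacle is the tangent complex computation: it is a genuine input rather than a formal consequence of the proximateness, and care is needed both to establish smallness of the deformation spaces over square-zero extensions without presupposing the conclusion, and to track the universe enlargement used to form $L$ so that the final values really do land in the small universe $\spaces$.
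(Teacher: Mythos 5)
Your reduction breaks down at its load-bearing step: the identification of the tangent complex of $\algdefa_B$. First, the identification you assert is incorrect. The augmented $\et$-algebra $k\oplus \hh^*(B)$, with $\hh^*(B)\simeq \uend_{\End_k(\lmod_B)}(\id_{\lmod_B})$, controls the formal moduli problem $\catdefa_{\lmod_B}$ of \emph{category} deformations, not $\algdefa_B$; the comparison map $\algdefa_B\mo \catdefa_{\lmod_B}$ induced by $\M_B$ (Notation \ref{notamoduledef}) is not an equivalence in general, its fiber being a deformation problem for the object $B$. Indeed, deforming $B$ as an algebra amounts (via Schwede--Shipley, cf.\ Remark \ref{remagendef}) to deforming the pair $(\lmod_B, B)$, so by Proposition \ref{propsimdefa} and Remark \ref{remaendce} the controlling algebra is $k\oplus \xi(\lmod_B,B)$ where $\xi(\lmod_B,B)$ is the \emph{fiber} of the evaluation map $\hh^*(B)\mo \uend_{\lmod_B}(B)\simeq B$, not $\hh^*(B)$ itself; correspondingly your formula $\algdefa_B(A)\simeq \map_{\algetk}(\dt(A),\hh^*(B))$ is the formula for $\catdefa_{\lmod_B}(A)$. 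Second, and more seriously, \emph{any} such identification is a theorem substantially harder than the corollary (it is essentially Proposition \ref{propsimdefa}, the analogue of \cite[Thm 5.3.16]{dagX}), and you do not prove it --- you flag it yourself as ``a genuine input.'' So the proposal has a hole exactly where the smallness is supposed to come from. There is also a structural circularity you only partially defuse: $m$-proximateness in \cite[Def 5.1.5]{dagX} is defined for functors valued in small spaces, so your entire framework ($L$, \cite[Thm 5.1.9]{dagX}, and the classification theorem) must be redeveloped in an enlarged universe before it can even be applied to $\algdef_B$.

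What is actually needed is much less, and it is what the paper's proof does. The only nontrivial smallness input is that $\Omega\algdef_B(k\oplus k[m])$ (loops at the trivial deformation) is essentially small, and this requires no deformation theory at all: this loop space is identified with (a union of components of) the homotopy fiber at $\id_B$ of the restriction map $\map_{\alg_A}(B\te_k A, B\te_k A)\mo \map_{\algk}(B,B)$, where $A=k\oplus k[m]$, a map between essentially small spaces since presentable $\oo$-categories are locally small. Proposition \ref{propalgdefprox} then embeds $\algdef_B(k\oplus k[m])$ into this loop space. For a general artinian $A$, the paper does not pass through $L$ at all: it chooses a finite tower $A=A_0\mo A_1\mo \hdots \mo A_n=k$ of elementary maps, so that $A_i\simeq k\times_{k\oplus k[m_i]} A_{i+1}$, and uses Proposition \ref{propalgdefprox} again to embed $\algdef_B(A_i)$ into $\algdef_B(k)\times_{\algdef_B(k\oplus k[m_i])}\algdef_B(A_{i+1})$, concluding by descending induction on $i$. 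If you repair your tangent-complex step by proving the loop-space smallness directly, you will find you have reproduced the first half of the paper's argument, at which point the enlarged-universe classification machinery in the second half of your proposal becomes unnecessary.
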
 

\begin{proof} 
Let $m\geq 0$ be an integer. By Proposition \ref{propalgdefprox} the map $\algdef_B(k\oplus k[m])\mo \Omega\algdef_B(k\oplus k[m+1])$ induces a homotopy equivalence onto its essential image. Moreover the target of this map is homotopy equivalent to the homotopy fiber of the restriction map $\map_{\alg_A}(B\te_k A, B\te_k A)\mo \map_{\algk} (B,B)$ at $\id_B$ and is therefore essentially small. This implies that $\algdef_B(k\oplus k[m])$ is essentially small for any $m\geq 0$. Let now $A$ be an artinian $\et$-algebra and let $A=A_0\mo A_1\mo \hdots \mo A_n=k$ be a finite sequence of maps such that for every $0\leq i\leq n-1$ there exists an integer $m_i\geq 0$ and a pullback diagram
$$\xymatrix{ A_{i}  \ar[r]^-{ } \ar[d]_-{ } & A_{i+1}  \ar[d]^-{ } \\ k \ar[r]^-{ } & k\oplus k[m_i].}$$
By Proposition \ref{propalgdefprox} the induced map 
$$\algdef_B(A)\mo \algdef_B(k)\times_{\algdef_B(k\oplus k[m_i])} \algdef_B(A_{i+1})$$ 
is a homotopy equivalence onto its essential image. We then deduce by descending induction on $i$, using the first part of the proof that $\algdef_B(A)$ is essentially small. 
\end{proof}

\begin{cor} 
Let $k$ be a field and $B$ an $\eo$-algebra over $k$. The the functor $\algdef_B:\algetkart\mo \spaces$ is a $1$-proximate formal $\et$-moduli problem. 
\end{cor}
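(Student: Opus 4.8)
The plan is to read off both defining conditions of a $1$-proximate formal $\et$-moduli problem directly from the two results just established, after unwinding the definition of proximity at the value $m=1$.

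First I would dispose of the basepoint condition. The final object of the deformation context $(\algetka, k)$ underlying formal $\et$-moduli problems (Example \ref{exdefconalgen}) is the augmented $\et$-algebra $k$ itself, so I must check that $\algdef_B(k)$ is contractible. By Construction \ref{constalgdef} this space classifies pairs $(B_k, u)$ with $B_k$ a $k$-algebra and $u : B_k \te_k k \simeq B$ an equivalence of $\eo$-algebras over $k$; since $B_k \te_k k \simeq B_k$, it is the space of objects of $\alg_k$ equipped with an equivalence to the fixed object $B$. Such a space is always contractible, so $\algdef_B(k) \simeq \ast$.

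Second, and this is the only point of substance, I would match the proximity estimate. Unwinding \cite[Def 5.1.5]{dagX} at $m = 1$, the functor $\algdef_B$ is $1$-proximate precisely when, for every pullback square in $\algetkart$ whose leg $A_0 \mo A_{01}$ is small, the comparison map
$$\algdef_B(A)\mo \algdef_B(A_0)\times_{\algdef_B(A_{01})} \algdef_B(A_1)$$
has $(1-2) = (-1)$-truncated homotopy fibers, that is, is a homotopy equivalence onto its essential image. But Proposition \ref{propalgdefprox} already establishes exactly this $(-1)$-truncatedness, and does so for \emph{all} pullback squares in $\algetkart$, not merely those with a small leg; the required condition therefore holds a fortiori.

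Finally, to legitimately regard $\algdef_B$ as a functor valued in small spaces $\spaces$ rather than in $\bspaces$, as the statement demands, I would invoke Corollary \ref{coralgdefsm}, according to which each value $\algdef_B(A)$ is essentially small. Assembling the three ingredients yields the claim. I anticipate no genuine obstacle: the real input, namely the full faithfulness of $\lmod_A \mo \lmod_{A_0}\times_{\lmod_{A_{01}}} \lmod_{A_1}$ transported through the algebra-object functor, has already been absorbed into Proposition \ref{propalgdefprox}, so that what remains is purely the bookkeeping of comparing the truncation index against the definition of an $m$-proximate moduli problem at $m=1$ and checking contractibility at the basepoint.
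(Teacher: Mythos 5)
Your proposal is correct and follows essentially the same route as the paper, whose entire proof is to combine Proposition \ref{propalgdefprox} (which gives $(-1)$-truncated comparison maps for \emph{all} pullback squares in $\algetkart$, hence a fortiori for those with a small leg) with Corollary \ref{coralgdefsm} (essential smallness of the values). Your explicit verification of the basepoint condition $\algdef_B(k)\simeq \ast$ — which the paper leaves implicit in Construction \ref{constalgdef}, since the slice construction $(\alg^{\mrm{cocart}})_{/(k,B)}$ forces the fiber over $k$ to be the contractible space of objects equipped with an equivalence to $B$ — is a harmless and accurate addition.
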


\begin{proof} 
It follows by combining Proposition \ref{propalgdefprox} and Corollary \ref{coralgdefsm}. 
\end{proof}

\begin{prop}\label{propalgdefnfmp}
Let $k$ be a field and let $B$ be an $n$-connective $\eo$-algebra over $k$ for some integer $n$ (i.e. the underlying $k$-module of $B$ is $n$-connective). Then the functor $\algdef_B$ is a formal $\et$-moduli problem. 
\end{prop}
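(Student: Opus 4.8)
The plan is to promote the monomorphism already established in Proposition \ref{propalgdefprox} to an equivalence, thereby showing that $\algdef_B$ satisfies the full Schlessinger condition rather than merely being $1$-proximate. Recall from §\ref{sssenfmp} that $\algdef_B$ is a formal $\et$-moduli problem exactly when $\algdef_B(k)$ is contractible — which holds, as $\algdef_B$ is already known to be $1$-proximate — and when, for every pullback square
$$\xymatrix{ A \ar[r] \ar[d] & A_0 \ar[d] \\ A_1 \ar[r] & A_{01} }$$
in $\algetkart$ with $\pi_0 A_0\mo \pi_0 A_{01}\leftarrow \pi_0 A_1$ surjective, the comparison map
$$\algdef_B(A) \lmo \algdef_B(A_0) \times_{\algdef_B(A_{01})} \algdef_B(A_1)$$
is an equivalence. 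By Proposition \ref{propalgdefprox} this map has $(-1)$-truncated homotopy fibers for an arbitrary pullback square, hence is a monomorphism of spaces; so it suffices to prove it is surjective on $\pi_0$, i.e.\ essentially surjective, for the squares above.

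First I would record the connectivity of deformations: if $A$ is an artinian $\et$-algebra and $B_A$ a deformation of $B$ over $A$, then the underlying $A$-module of $B_A$ is $n$-connective. Since the maximal ideal of an artinian algebra is nilpotent, a derived Nakayama argument shows that $B_A$ is $n$-connective once $B_A\te_A k\simeq B$ is; concretely, for a single square-zero extension $A=k\oplus k[m]$ the augmentation ideal yields a cofiber sequence $(B_A\te_A k)[m]\to B_A\to B_A\te_A k$ of $A$-modules whose outer terms are $n$-connective, and the general case follows by induction along the finite tower of elementary maps exhibiting $A$ as artinian. In particular, in the square above all of $B_{A_0}$, $B_{A_1}$ and their common restriction $B_{A_{01}}$ are $n$-connective.

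The heart of the argument is the gluing step. A point of $\algdef_B(A_0)\times_{\algdef_B(A_{01})}\algdef_B(A_1)$ is a pair of deformations $B_{A_0},B_{A_1}$ together with an identification $B_{A_0}\te_{A_0}A_{01}\simeq B_{A_1}\te_{A_1}A_{01}$ of $A_{01}$-algebra deformations of $B$. Forgetting to modules, \cite[Thm 7.4]{dagIX} realizes $\lmod_A$ as a full subcategory of $\lmod_{A_0}\times_{\lmod_{A_{01}}}\lmod_{A_1}$, and its connective form (an equivalence onto connective objects under the surjectivity hypotheses on $\pi_0$) shows, after shifting by $n$, that the $n$-connective triple $(B_{A_0},B_{A_1})$ lies in the essential image. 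Hence the $A$-module $M:=B_{A_0}\times_{B_{A_{01}}}B_{A_1}$ satisfies $M\te_A A_i\simeq B_{A_i}$. Because the functor $\alg_A\mo \alg_{A_0}\times_{\alg_{A_{01}}}\alg_{A_1}$ is fully faithful and its right adjoint is computed by the limit, with the forgetful functor to modules preserving limits (see the proof of Proposition \ref{propalgdefprox}), the $A$-algebra $B_A:=B_{A_0}\times_{B_{A_{01}}}B_{A_1}$ has underlying module $M$ and satisfies $B_A\te_A A_i\simeq B_{A_i}$ as $A_i$-algebras. Finally $B_A\te_A k\simeq (B_A\te_A A_i)\te_{A_i}k\simeq B_{A_i}\te_{A_i}k\simeq B$, so $B_A$ is a deformation of $B$ over $A$ restricting to the chosen pair, which gives the required $\pi_0$-surjectivity; together with the monomorphism above, the comparison map is an equivalence.

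The main obstacle is precisely this gluing step, and it is where the $n$-connectivity of $B$ is indispensable. For an unbounded deformation the limit $M=B_{A_0}\times_{B_{A_{01}}}B_{A_1}$ need not satisfy $M\te_A A_i\simeq B_{A_i}$: the base-change maps fail to be equivalences, so a compatible pair need not lift to a deformation over $A$. The connectivity estimate for deformations over artinian bases is exactly what places the relevant modules in the range where the connective gluing theorem applies and forces these base-change maps to be equivalences, which is the precise point at which the boundedness hypothesis on $B$ enters the proof.
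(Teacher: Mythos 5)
Your proof is correct and follows essentially the same route as the paper: both arguments rest on the two key inputs that a deformation of an $n$-connective module over an artinian base remains $n$-connective (the paper cites the proof of \cite[Prop 5.2.14]{dagX}) and that the connective gluing theorem \cite[Prop 7.6]{dagIX}, shifted by $n$, applies in that range. The only difference is organizational: where the paper transfers the module-level equivalence to algebras by noting that the proof of \cite[Prop 7.6]{dagIX} goes through since the forgetful functor $\alg_{A'}\mo \lmod_{A'}$ is conservative, you instead split the claim into the monomorphism from Proposition \ref{propalgdefprox} plus $\pi_0$-surjectivity and build the glued algebra explicitly as the fiber product computed on underlying modules --- a slightly more self-contained bookkeeping of the same argument.
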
 

\begin{proof} 
For an $\et$-algebra $A$, let $\lmod_A^{\geq n}$ denote the full subcategory of $\lmod_A$ spanned by $n$-connective left $A$-modules. Moreover let $\alg_A^{\geq n}$ denote the full subcategory of $\alg_A$ spanned by the $n$-connective $A$-algebras. By a variant of \cite[Prop 7.6]{dagIX}, for every pullback diagram 
$$\xymatrix{ A \ar[r]^-{ } \ar[d]^-{ } & A_0 \ar[d]^-{ } \\ A_1 \ar[r]^-{ } & A_{01} }$$
in $\algetkart$ such that the maps $\pi_0 A_0\mo \pi_0 A_{01} \leftarrow \pi_0 A_1$ are surjective, the induced functor 
$$\lmod_A^{\geq n} \lmo \lmod_{A_0}^{\geq n} \times_{ \lmod_{A_{01}}^{\geq n} } \lmod_{A_1}^{\geq n} $$ 
is an equivalence, where $\lmod_A^{\geq n}$ is the $\oo$-category of $n$-connective left $A$-modules. Because for every $\et$-algebra $A'$ the forgetful functor $\alg_{A'}\mo \lmod_{A'}$ is conservative, the proof of \cite[Prop 7.6]{dagIX} also shows that the induced functor
$$\alg_A^{\geq n} \lmo \alg_{A_0}^{\geq n}\times_{\alg_{A_{01}}^{\geq n}} \alg_{A_1}^{\geq n}$$
is an equivalence. Remark that if $M$ is an $n$-connective $k$-module and if $M_{A'}$ is a deformation of $M$ over an artinian $\et$-algebra $A'$, then the $A'$-module $M_{A'}$ is $n$-connective (see the proof of \cite[Prop 5.2.14]{dagX}). By the above, this implies that the induced map 
$$\algdef_B(A)\lmo  \algdef_B(A_0)\times_{\algdef_B(A_{01})} \algdef_B(A_1)$$
is a homotopy equivalence and therefore that $\algdef_B$ is a formal $\et$-moduli problem. 
\end{proof}

\begin{nota}\label{notamoduledef} 
Let $k$ be a field and $B$ an $\eo$-algebra over $k$. Consider the cocartesian fibration $s:\alg\mo \algetk$ from Construction \ref{constalgdef} and the cocartesian fibration $p:\rcatk\mo \algetk$ from §\ref{sscatdef}. By \cite[Not 4.8.5.10]{ha} there exists a functor $\alg\mo \rcatk$ which to a pair $(A,B_A)$ with $A$ an $\et$-algebra over $k$ and $B_A$ an $A$-algebra, assigns the pair $(A,\lmod_{B_A}(\lmod_A))$ where $\lmod_{B_A}(\lmod_A)$ is seen as right tensored over $\lmod_A^\te$. By \cite[Prop 4.8.5.1]{ha} the functor $\alg\mo \rcatk$ sends $s$-cocartesian morphisms to $p$-cocartesian morphisms and therefore induces a natural transformation $\algdef_B\mo \catdef_{\lmod_B}$ which factors as a natural transformation
$$\M_B : \algdef_B\lmo \catdefc_{\lmod_B}$$
given on objects by applying the $\oo$-category of left modules. 
\end{nota}

\begin{nota}
Let $k$ be a field, let $\ccal$ be a compactly generated $k$-linear $\oo$-category and let $E\in \ccal$ be an object. Consider the cocartesian fibration $s:\alg\mo \algetk$ from Construction \ref{constalgdef} and the cocartesian fibration $q:\rcatkp\mo \algetk$ from Construction \ref{constsimdef}. By \cite[Thm 4.8.5.11]{ha}, there exists a functor $\rcatkp\mo \alg$ which to a triple $(A,\ccal_A, E_A)$ with $A$ an $\et$-algebra over $k$, $\ccal_A$ a right $A$-linear $\oo$-category and $E_A\in \ccal_A$ an object, assigns the pair $(A,\uend_{\ccal_A} (E_A))$ where $\uend_{\ccal_A} (E_A)$ is seen as an $A$-algebra. It is easy to see that the functor $\rcatkp\mo \alg$ sends $q$-cocartesian morphisms to $s$-cocartesian morphisms and therefore induces a natural transformation 
$$\ecal_{(\ccal, E)} : \simdefc_{(\ccal, E)}\lmo \algdef_{\uend_\ccal(E)}$$ 
given on objects by taking endomorphisms. 
\end{nota}

\begin{rema}\label{remagendef}
Let $k$ be a field, let $\ccal$ be a tamely compactly generated $k$-linear $\oo$-category and let $E\in \ccal$ be a compact generator of $\ccal$. Let $A$ be an artinian $\et$-algebra over $k$, and suppose given a deformation $\ccal_A$ of $\ccal$ over $A$ and a deformation $E_A\in \ccal_A$ of $E$. Then by  \cite[Lem 5.3.31]{dagX} we have that $\ccal_A$ is tamely compactly generated, and \cite[Lem 5.3.37]{dagX} implies that the object $E_A$ is a compact generator of $\ccal_A$. Therefore by \cite[Thm 7.1.2.1]{ha} (Schwede--Shipley result), there exists a natural $A$-linear equivalence 
$$\lmod_{\uend_{\ccal_A}(E_A)}(\lmod_A)\simeq \ccal_A.$$
This fact implies that the diagram 
$$\xymatrix{ \simdefc_{(\ccal, E)} \ar[r]^-{\ecal_{(\ccal, E)} } \ar[rd]_-{\rho_{(\ccal, E)}} & \algdef_{\uend_\ccal (E)}  \ar[d]^-{\M_{\uend_\ccal (E)}} \\ & \catdefc_\ccal }$$
commutes up to natural homotopy, where $\rho_{(\ccal, E)}$ is the natural transformation from Construction \ref{constmapssimdef} given by forgetting the distinguished object. Indeed, unwinding the construction of each of these functors, we see that it suffices to produce for each artinian $\et$-algebra $A$ over $k$ an equivalence as above.
\end{rema}


\subsection{Formal deformations}

Let $k$ be a field and let $\ccal$ be a $k$-linear $\oo$-category. Recall that under the assumption that $\ccal$ is tamely compactly generated (see Definition \ref{deftamely}), we have by \cite[Prop 5.3.21]{dagX} that the natural transformation
$$\theta_\ccal : \catdefc_\ccal \lmo \catdefa_\ccal$$ 
is $(-1)$-truncated. Under some additional assumptions on $\ccal$, we now prove that the map $\theta_\ccal$ induces a surjection on $\pi_0$ for formal deformations, and therefore is an equivalence on formal deformations (that is, over the adic ring $\kfor$ of formal power series). To prove this, we prove a stronger statement which implies the existence of a compact generator for formal deformations.

We first set up some conventions about formal deformations. 

Let $n\geq 1$ be an integer, let $k$ be a field and let $X:\algenkart\mo \spaces$ be a functor. Consider the discrete commutative algebra $\kfor$ of formal power series in one variable as an augmented $\en$-algebra over $k$ via the forgetful functor $\calgka\mo \algenka$. The space of formal deformations in $X$ is by definition the space 
$$X(\kfor)=\map_{\fun_*(\algenkart, \spaces)} (\spf(\kfor), X)$$ 
where $\spf:\algenka\mo \fmpen$ is the formal spectrum (see Notation \ref{notaspf}).

\begin{lem}\label{lemdualdisalg}
Let $k$ be a field and let
$$\hdots \mo B_2 \mo  B_1 \mo  B_0$$
be an inverse system of discrete local artinian commutative $k$-algebras having same residue field $k$. Let $B=\lim_i B_i$ be the limit. Let $n\geq 2$ and consider for each $i$ the algebra $B_i$ as an artinian $\en$-algebra and $B$ as an augmented $\en$-algebra via the forgetful functor $\calgka\mo \algenka$. Then the natural map $\colim_i \spf(B_i)\mo \spf(B)$ is an equivalence of formal $\en$-moduli problems. Equivalently, the natural map $\colim_i \dn(B_i)\mo \dn(B)$ is an equivalence of augmented $\en$-algebras over $k$. 
\end{lem}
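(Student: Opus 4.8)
The plan is to prove the assertion about formal $\en$-moduli problems and to deduce the algebraic reformulation formally. Since the equivalence $\tfr^{(n)}:\fmpen\to\algenka$ of \cite[Thm 4.0.8]{dagX} sends $\spf(C)$ to $\dn(C)$ naturally in $C$ by Lemma \ref{lemalgrepspf}, and since an equivalence of $\oo$-categories preserves all colimits, the map $\colim_i\dn(B_i)\to\dn(B)$ is obtained by applying $\tfr^{(n)}$ to $\colim_i\spf(B_i)\to\spf(B)$; hence one is an equivalence if and only if the other is. So I would concentrate on showing that $\colim_i\spf(B_i)\to\spf(B)$ is an equivalence in $\fmpen$.

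First I would compute the colimit on the left pointwise. The full subcategory $\fmpen\subseteq\fun_*(\algenkart,\spaces)$ is cut out by the Schlessinger conditions, which require that certain squares be carried to pullback squares; since filtered colimits commute with finite limits in $\spaces$, a pointwise filtered colimit of functors satisfying these conditions again satisfies them. Thus $\fmpen$ is closed under filtered colimits in $\fun_*(\algenkart,\spaces)$, and $\colim_i\spf(B_i)$ is computed objectwise: for artinian $R$ its value is $\colim_i\map_{\algenka}(B_i,R)$. The natural map to $\spf(B)$ then reads, at $R$, as $\colim_i\map_{\algenka}(B_i,R)\to\map_{\algenka}(\lim_i B_i,R)$.

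By \cite[Prop 1.2.10]{dagX} a morphism of formal $\en$-moduli problems is an equivalence if and only if it induces an equivalence on tangent complexes, so it suffices to treat the case $R=k\oplus k[m]$ for every $m\geq 0$. Here I would use that mapping an augmented $\en$-algebra into a trivial square-zero extension is corepresented by the $\en$-cotangent fiber: $\map_{\algenka}(C,k\oplus k[m])\simeq\map_{\modk}(\mathrm{cot}(C),k[m])\simeq\Omega^{\oo}(\mathrm{cot}(C)^\vee[m])$, so that $T_{\spf(C)}\simeq\mathrm{cot}(C)^\vee$. Since the tangent complex of the pointwise colimit is the colimit of the tangent complexes (filtered colimits being exact, equivalently because $\mfr$ is computed on underlying modules), the required equivalence becomes $\colim_i\mathrm{cot}(B_i)^\vee\to\mathrm{cot}(\lim_i B_i)^\vee$.

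The crux, and the main obstacle, is this last equivalence. The idea is that for a tower of discrete local artinian algebras with surjective transition maps the tower of $\en$-cotangent fibers $\{\mathrm{cot}(B_i)\}$ is pro-equivalent, degreewise, to the constant tower $\mathrm{cot}(B)$: because the kernels of $B_{i+1}\to B_i$ are nilpotent of unbounded order, any cotangent class not already present in $\mathrm{cot}(B)$ is eventually killed further up the tower, so that the ``non-stable'' part of the tower is pro-zero. Granting this, the transition maps on each homotopy group are identities on the stable part and pro-zero on the rest; and since each $\mathrm{cot}(B_i)$ is locally finite (degreewise finite-dimensional, using the finiteness and biduality results \cite[Prop 4.1.13, Thm 4.4.5]{dagX}, where the hypothesis $n\geq 2$ guarantees the relevant coconnectivity), $k$-linear duality is exact and converts the cofiltered limit into the filtered colimit degreewise. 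Establishing the pro-constancy of $\{\mathrm{cot}(B_i)\}$ rigorously---controlling the higher $\en$-André--Quillen homology of the quotients and the Mittag--Leffler behaviour of the tower---is where the real work lies, and the discreteness, the artinian hypothesis, and $n\geq 2$ all enter precisely to force this convergence.
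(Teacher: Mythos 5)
Your reduction steps are all sound and agree with the paper's own skeleton: closure of $\fmpen$ under pointwise filtered colimits, the passage between the $\spf$-formulation and the $\dn$-formulation via Lemma \ref{lemalgrepspf}, and the reduction to checking the map on $R=k\oplus k[m]$ (tangent spectra). The problem is that everything after this reduction is deferred rather than proved: the equivalence $\colim_i \mathrm{cot}(B_i)^\vee \mo \mathrm{cot}(B)^\vee$, i.e.\ the ``pro-constancy'' of the tower of $\en$-cotangent fibers, is not an auxiliary technicality --- it \emph{is} the lemma, merely rewritten through the adjunction $\map_{\algenka}(C,k\oplus k[m])\simeq \map_{\modk}(\mathrm{cot}(C),k[m])$. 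Your justification (``classes not already present in $\mathrm{cot}(B)$ are eventually killed up the tower'') is a heuristic, and you concede as much (``this is where the real work lies''). A direct attack on the $\en$-Andr\'e--Quillen homology of the tower $\{B_i\}$ would be genuinely difficult, and nothing in the sketch shows how the hypotheses of discreteness, the artinian condition, or $n\geq 2$ would actually be brought to bear to force the claimed convergence.

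The paper avoids this work entirely by a transfer argument, which is the step your proposal is missing. It invokes Lurie's continuity statement in the \emph{commutative} case, \cite[Lem 6.3.3]{dagXII}, which gives that $\colim_i\map_{\calgka}(B_i,k\oplus k[m])\mo \map_{\calgka}(B,k\oplus k[m])$ is an equivalence, and then compares this with the corresponding map of $\en$-mapping spaces along the forgetful functor $\calgka\mo\algenka$: since the $B_i$ and $B$ are discrete and $n\geq 2$, discrete augmented $\en$-algebras coincide with discrete augmented commutative algebras, so the vertical comparison maps in the resulting square are equivalences and the $\en$-statement follows from the $\ei$-statement. This is precisely where ``discrete'' and ``$n\geq 2$'' enter. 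To complete your argument you would either have to prove the continuity/pro-constancy statement for $\en$-cotangent fibers from scratch --- substantially harder than the lemma warrants --- or, as the paper does, reduce to the already-known $\ei$-case.
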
 

\begin{proof} 
The statement for $n=\infty$ is given by \cite[Lem 6.3.3]{dagXII}. More precisely, let $R=k\oplus k[m]$ as an artinian $\ei$-algebra for some integer $m\geq 0$ and consider the homotopy commutative diagram 
$$\xymatrix{ \colim_i \map_{\calgka} (B_i, R)  \ar[r]^-{ } \ar[d]_-{ } & \map_{\calgka}(B,R) \ar[d]^-{ } \\  \colim_i \map_{\algenka} (B_i, R)  \ar[r]^-{ } & \map_{\algenka}(B,R) }$$
where the vertical maps are given by the forgetful functor $\calgka\mo \algenka$. By \cite[Lem 6.3.3]{dagXII} the horizontal top map is a homotopy equivalence. Denote by $\calgkah$ and $\algenkah$ the subcategories of discrete augmented $\ei$-algebras over $k$ and of discrete augmented $\en$-algebras over $k$ respectively. As $n\geq 2$ the forgetful functor induces an equivalence of categories $\calgkah\simeq  \algenkah$. The $\ei$-algebras $B_i$ and $B$ being discrete, this implies that the vertical maps in the diagram are homotopy equivalences. Therefore the horizontal bottom map is a homotopy equivalence as well. We have proven that the map $\colim_i \spf(B_i)\mo \spf(B)$ induces an equivalence on tangent spectra, which implies that it is an equivalence of formal $\en$-moduli problems. The last statement follows from Lemma \ref{lemalgrepspf} by applying the equivalence $\tfr^{(n)}:\fmpen\mo \algenka$. 
\end{proof}

\begin{rema}\label{remafordef}
Let $n\geq 2$ be an integer, let $k$ be a field and let $X:\algenkart\mo \spaces$ be a functor. Applying Lemma \ref{lemdualdisalg} to the ring $\kfor=\lim_i k[t]/t^i$ of formal power series in one variable over $k$, we obtain a natural homotopy equivalence: 
$$X(\kfor)\simeq \lim_i X(k[t]/t^i).$$
Therefore a point in $X(\kfor)$ is essentially given by a formal family $\{\eta_i\}_i$ of points $\eta_i\in X(k[t]/t^i)$ together with for every $i\geq 1$, a path $p_{i} (\eta_{i+1} )\simeq \eta_i$ in $ X(k[t]/t^i)$, where $p_{i} :X(k[t]/t^{i+1})\mo X(k[t]/t^i)$ is the map induced by the projection $k[t]/t^{i+1}\mo k[t]/t^{i}$, plus higher coherences. 
\end{rema}

\begin{rema}
Let $k$ be a field and let $\ccal$ be a compactly generated $k$-linear $\oo$-category. Remark \ref{remafordef} applied to the functor $\catdefc_\ccal:\algetkart\mo \spaces$ implies that a point in $\catdefc_\ccal(\kfor)$ is essentially given by a formal family $\{\ccal_i\}_i$ with $\ccal_i$ a compactly generated $k[t]/t^i$-linear $\oo$-category together with for each $i\geq 1$ a $k[t]/t^i$-linear equivalence $\ccal_{i+1} \te_{k[t]/t^{i+1}} k[t]/t^i\simeq \ccal_i$.
\end{rema}

\begin{rema}\label{remaku}
Let $k$ be a field and let $\ccal$ be a $k$-linear $\oo$-category. Consider the space of formal deformations $\catdefa(\kfor)=\map_{\fmpet}(\spf(\kfor), \catdefa)$. Applying the equivalence $\tfr^{(2)}:\fmpet\mo \algetka$, by Lemma \ref{lemalgrepspf} and \cite[Thm 3.5.1]{dagX} there exists natural homotopy equivalences
\begin{align*}
\catdefa(\kfor) & = \map_{\fmpet}(\spf(\kfor), \catdefa_\ccal) \\
 & \simeq \map_{\algetka} (\dt(\kfor), k\oplus \xi(\ccal)) \\
 & \simeq \map_{\algetk} (\dt(\kfor), \xi(\ccal)),
\end{align*}
where $\xi(\ccal)$ is the $\et$-algebra given by the center of $\ccal$. 

The usual computation using the Koszul--Tate resolution gives $\dt(\kfor)\simeq k[u]$ where $k[u]$ is the augmented $\et$-algebra over $k$ associated to the free augmented graded commutative $k$-algebra on one generator in cohomological degree $2$ via the forgetful functor. We deduce from this and from the universal property of the center (\cite[Rem 5.3.11]{dagX}) that there exists a natural homotopy equivalence 

$$\catdefa_\ccal (\kfor) \simeq \map_{\algetk} (k[u], \xi(\ccal)) \simeq \lmod_{\lmod_{k[u]}^\te} (\prlk)^\simeq \times_{(\prlk)^\simeq} \{\ccal\}$$

with the space of left $k[u]$-linear structures on the $\oo$-category $\ccal$. 
\end{rema}

Here is our first main result.

\begin{theo}\label{thmcatdeffor}
Let $k$ be a field and let $\ccal$ be a tamely compactly generated $k$-linear $\infty$-category which has a compact generator. Then the map
$$\theta_\ccal^t: \catdefc_\ccal(\kfor)\mo \catdefa_\ccal(\kfor)$$ 
induced by $\theta_\ccal$ is a homotopy equivalence. 
\end{theo}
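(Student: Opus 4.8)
The plan is to leverage the fact that $\theta_\ccal$ is \emph{already} known to be $(-1)$-truncated and thereby reduce the theorem to a single essential-surjectivity statement, which I then settle by exhibiting a cleverly chosen compact generator. First I would record that $\theta_\ccal^t$ is $(-1)$-truncated: since $\ccal$ is tamely compactly generated, \cite[Prop 5.3.21]{dagX} shows $\catdefc_\ccal$ is a $1$-proximate formal $\et$-moduli problem, so by \cite[Thm 5.1.9]{dagX} the unit $\theta_\ccal$ is $(-1)$-truncated on every artinian $\et$-algebra. By Remark \ref{remafordef} the map $\theta_\ccal^t$ is the limit $\lim_i \theta_\ccal(k[t]/t^i)$ of $(-1)$-truncated maps of spaces, and a limit of $(-1)$-truncated maps is again $(-1)$-truncated (its fibers are limits of the fibers). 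A $(-1)$-truncated map of spaces is an equivalence precisely when it is surjective on $\pi_0$, so it remains only to lift an arbitrary point $\phi\in \catdefa_\ccal(\kfor)$ through $\theta_\ccal^t$. By Remark \ref{remaku} such a $\phi$ is exactly a $k[u]$-linear structure on $\ccal$, i.e.\ an augmented $\et$-algebra map $\phi: k[u]\mo \xi(\ccal)$; in particular it provides a central multiplication $u_E: E\mo E[2]$ on the given compact generator $E$.

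The heart of the argument is to produce a new compact generator on which this action is trivial. I would set $E'=\cocone(u_E: E\mo E[2])$, so there is a fiber sequence $E'\mo E\mos{u_E} E[2]$ in $\ccal$. As a finite limit of the compact objects $E$ and $E[2]$, the object $E'$ is compact, and since $u_E$ is the action of $u$ one checks that the induced $k[u]$-action on $E'$ factors through the augmentation $k[u]\mo k$; consequently the composite $k[u]\mos{\phi}\xi(\ccal)\mos{\chi_{E'}} \uend_\ccal(E')$ is nullhomotopic, where $\chi_{E'}$ is the obstruction map of Remark \ref{remaobsmap}. The decisive point, which I expect to be the main obstacle, is that $E'$ still generates $\ccal$. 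Applying $\umap_\ccal(-,C)$ to the fiber sequence identifies $\umap_\ccal(E',C)$ with the cofiber of multiplication by $u$ on $\umap_\ccal(E,C)$, so $C\in (E')^\perp$ if and only if $u$ acts invertibly on $\umap_\ccal(E,C)$; such a $C$ then corresponds to a nonzero module over the localization $\uend_\ccal(E)[u^{-1}]$. Here the tameness hypothesis enters essentially: since $\extg^m_\ccal(E,E)=0$ for $m\gg 0$, the homotopy groups $\pi_n\,\uend_\ccal(E)[u^{-1}]\simeq \colim_k \extg^{2k-n}_\ccal(E,E)$ all vanish, whence $\uend_\ccal(E)[u^{-1}]=0$ and therefore $(E')^\perp=0$. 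Thus $E'$ is a compact generator of $\ccal$, and as $\ccal$ is tame it also satisfies $\extg^m_\ccal(E',E')=0$ for $m\gg 0$.

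Finally I would feed $E'$ into the machinery of §\ref{sssimul}--§\ref{ssass}. Because $u$ acts trivially on $E'$, the vanishing of $\chi_{E'}\circ \phi$ together with the fiber sequence $\objdefeta_{E'}\mo \simdefa_{(\ccal,E')}\mo \catdefa_\ccal$ of Proposition \ref{propfibseq} (whose identification on tangent complexes is $\xi(\ccal,E')\mo \xi(\ccal)\mo \uend_\ccal(E')$) lets me lift $\phi$ to a point $\tilde\phi\in \simdefa_{(\ccal,E')}(\kfor)\simeq\map_{\algetk}(\dt(\kfor),\xi(\ccal,E'))$, using the description of Proposition \ref{propsimdefa}. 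Next, setting $B=\uend_\ccal(E')$, tameness forces $B$ to be bounded below, hence $n$-connective for some integer $n$, so by Proposition \ref{propalgdefnfmp} the functor $\algdef_B$ is an honest formal $\et$-moduli problem; pushing $\tilde\phi$ forward along $\ecal_{(\ccal,E')}^\wedge: \simdefa_{(\ccal,E')}\mo \algdef_B$ yields a point $\psi\in \algdef_B(\kfor)$.

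The last step is to turn $\psi$ into a genuine deformation and check it maps to $\phi$. Applying the module functor $\M_B: \algdef_B\mo \catdefc_{\lmod_B}\simeq \catdefc_\ccal$ of Notation \ref{notamoduledef}, where the equivalence $\lmod_B\simeq\ccal$ is Schwede--Shipley \cite[Thm 7.1.2.1]{ha} applied to the compact generator $E'$ as in Remark \ref{remagendef}, produces a compactly generated formal deformation $\M_B(\psi)\in \catdefc_\ccal(\kfor)$. Now the commuting triangle $\M_B\circ \ecal_{(\ccal,E')}\simeq \rho_{(\ccal,E')}$ of Remark \ref{remagendef} (after applying $L$) and the identity $\theta_\ccal\circ \M_B\simeq \M_B^\wedge$ give
$$\theta_\ccal^t(\M_B(\psi))\simeq \M_B^\wedge(\ecal_{(\ccal,E')}^\wedge(\tilde\phi))\simeq \rho_{(\ccal,E')}^\wedge(\tilde\phi)\simeq \phi,$$
since $\tilde\phi$ was chosen as a lift of $\phi$ along $\rho_{(\ccal,E')}^\wedge$. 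This establishes surjectivity of $\theta_\ccal^t$ on $\pi_0$, and combined with the first paragraph it proves that $\theta_\ccal^t$ is a homotopy equivalence. I expect the construction of $E'$ and the verification that it generates (via the vanishing of $\uend_\ccal(E)[u^{-1}]$) to carry the essential content; the remaining steps are bookkeeping with the fiber sequences and functors already set up in §\ref{sssimul}--§\ref{ssass}.
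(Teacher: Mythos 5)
Your overall strategy coincides with the paper's: reduce to surjectivity on $\pi_0$ using that $\theta_\ccal$ is $(-1)$-truncated, form the cocone $E'$ of the $u$-action on the given generator, prove $E'$ is again a compact generator using tameness, and then route the lifted point through $\simdefa_{(\ccal,E')}$, the algebra deformation functor of $\uend_\ccal(E')$ (an honest formal $\et$-moduli problem by connectivity and Proposition \ref{propalgdefnfmp}), and Schwede--Shipley. Your generation argument is in fact a pleasant variant of the paper's: inverting $u$ and computing $\pi_n\,\uend_\ccal(E)[u^{-1}]\simeq \colim_j \extg^{2j-n}_\ccal(E,E)=0$ handles an arbitrary test object $C$, whereas the paper's periodicity argument reduces to compact $F$.

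However, there is a genuine gap at the decisive step, which is not the generation statement you flag as the main obstacle, but the lifting of $\phi$ to $\simdefa_{(\ccal,E')}(\kfor)$. Two problems. First, the assertion that ``the induced $k[u]$-action on $E'$ factors through the augmentation'' is not a property one checks; it is a \emph{structure} that must be constructed, namely a lift of $E'$ to a right $k$-module object of $\ccal$ equipped with the $k[u]$-linear structure $\ccal_u$ --- and by the universal property of $\xi(\ccal,E')$ (Notation \ref{notacenterrel}) such a lift is precisely a point of the fiber of $\simdefa_{(\ccal,E')}(\kfor)\mo\catdefa_\ccal(\kfor)$ over $\phi$, i.e.\ it is the thing to be produced. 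Passing to the (co)fiber of a central element does not automatically trivialize its action: multiplication by $2$ on the Moore spectrum $\s/2=\cofib(2\colon\s\mo\s)$ is not nullhomotopic, even though $2$ is central. What saves the situation here is that $k=k[u]/u$ is an honest $k[u]$-\emph{algebra}, and the paper's proof of Proposition \ref{propgencatdef} exploits exactly this: it identifies $E^u$ with the cotensor $E^k$, using the right adjoint of the forgetful functor $\rmod_k(\ccal_u)\mo\ccal_u$ and the fiber sequence $k[u]\mo k[u][-2]\mo k$ in $\lmod_{k[u]}$, so that $E^u$ canonically inherits the required $k$-module structure. Second, even granting a nullhomotopy of the composite $k[u]\mo\uend_\ccal(E')$ as a map of $k$-modules, your deduction of the lift from the fiber sequence $\objdefeta_{E'}\mo\simdefa_{(\ccal,E')}\mo\catdefa_\ccal$ is not valid: evaluating a fiber sequence of formal moduli problems on $\kfor$ identifies only the fiber over the \emph{basepoint}; the long exact sequence ends at $\pi_0$ and asserts nothing about whether the fiber over an arbitrary point $\phi$ is nonempty, i.e.\ nothing about surjectivity on $\pi_0$. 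In short, your argument conflates the vanishing of an obstruction class with a coherent trivialization of the $k[u]$-action on $E'$; the latter, supplied in the paper by the cofree-module construction, is the actual content of the proof.
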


We will deduce Theorem \ref{thmcatdeffor} from the following intermediate statement.

\begin{prop}\label{propgencatdef}
Let $k$ be a field and let $\ccal$ be tamely compactly generated $k$-linear $\infty$-category which has a compact generator $E\in \ccal$. Let $\alpha\in  \catdefa_\ccal(\kfor)$ be any point. Then there exists another compact generator $E^u$ of $\ccal$ and a formal deformation $B_t$ of its endomorphism algebra $B=\uend_\ccal (E^u)$ such that the connected component of $B_t$ is sent to the connected component of $\alpha$ by the composite map 
$$\pi_0\algdef_B(\kfor)\lmos{\M_B^t} \pi_0\catdefc_{\ccal}(\kfor) \lmos{\theta_\ccal^t} \pi_0\catdefa_\ccal(\kfor).$$
In particular, the map $\theta_\ccal^t : \catdefc_\ccal(\kfor)\mo\catdefa_\ccal(\kfor)$ is surjective on $\pi_0$. 
\end{prop}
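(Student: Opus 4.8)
The plan is to follow the strategy announced in the introduction: from the datum $\alpha$ I want to manufacture a new compact generator $E^u$ on which the induced $k[u]$-action is trivial, and then to exploit triviality to realize $\alpha$ by an honest deformation of the endomorphism algebra of $E^u$. First I would unwind $\alpha$. By Remark \ref{remaku} a point $\alpha\in\catdefa_\ccal(\kfor)\simeq\map_{\algetk}(k[u],\xi(\ccal))$ is the same as a $k[u]$-linear structure on $\ccal$, that is, an action making $\ccal$ tensored over $\lmod_{k[u]}$; write $\phi_u=\alpha(u)\in\hh^2(\ccal)$ for the image of the degree-$2$ generator and $u_E=\chi_E(\phi_u)\in\extg^2_\ccal(E,E)$ for its action on $E$ (Remark \ref{remaobsmap}). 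Set $E^u:=k\te_{k[u]}E$, with $k$ the augmentation module, so that $E^u$ fits in a cofiber sequence $E[-2]\xrightarrow{u_E}E\to E^u$; this is the cocone construction killing multiplication by $u$ on $E$. Since the $k[u]$-action on $k\te_{k[u]}E$ factors through the null action of $u$ on $k$, the central element $u$ acts trivially on $E^u$, equivalently $\chi_{E^u}(\phi_u)=0$ in $\extg^2_\ccal(E^u,E^u)$.

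Next I would check that $E^u$ is again a tame compact generator. Compactness is immediate, as $E^u$ is a finite colimit of shifts of the compact object $E$, and $\extg^m_\ccal(E^u,E^u)=0$ for $m\gg 0$ by tameness (Definition \ref{deftamely}). The essential point is generation, and this is exactly where tameness is used decisively. If $C\in(E^u)^\perp$, applying $\umap_\ccal(-,C)$ to the cofiber sequence above yields a fiber sequence $\umap_\ccal(E^u,C)\to\umap_\ccal(E,C)\xrightarrow{u}\umap_\ccal(E,C)[2]$, so $\umap_\ccal(E^u,C)\simeq 0$ forces $u$ to act invertibly on $M:=\umap_\ccal(E,C)$. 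But the action of $u$ on $M$ is that of $u_E$, which is nilpotent: its powers $u_E^{\,j}=\chi_E(\phi_u^{\,j})$ lie in $\extg^{2j}_\ccal(E,E)=0$ for $j\gg 0$, so some iterate $M\xrightarrow{u^{\,j}}M[2j]$ is simultaneously an equivalence and nullhomotopic, whence $M\simeq 0$. Thus $C\in E^\perp=0$ and $E^u$ generates. (This is precisely what breaks down in Example \ref{exarigid}, where tameness fails, $u_E$ is invertible, and $E^u$ vanishes.)

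With $E^u$ in hand I would promote $\alpha$ to a relative deformation. Because $u$ acts trivially on $E^u$, the obstruction $\chi_{E^u}(\phi_u)=0$ vanishes, so along the fiber sequence $\objdefeta_{E^u}\to\simdefa_{(\ccal,E^u)}\to\catdefa_\ccal$ of Proposition \ref{propfibseq} the point $\alpha$ lifts to $\tilde\alpha\in\simdefa_{(\ccal,E^u)}(\kfor)\simeq\map_{\algetk}(k[u],\xi(\ccal,E^u))$, i.e. a $k[u]$-linear structure on $\ccal$ acting trivially on $E^u$ (Proposition \ref{propsimdefa}). Via the endomorphism transformation $\ecal_{(\ccal,E^u)}$ such relative deformations are identified with deformations of $B:=\uend_\ccal(E^u)$.

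The main obstacle is the final realization step: passing from the formal-moduli datum $\tilde\alpha$ to an honest formal algebra deformation $B_t\in\algdef_B(\kfor)$. This is exactly the curvature problem. The potential curvature of the deformation of $B$ cut out by $\tilde\alpha$ is the image of $\alpha$ under $\chi_{E^u}$ in $\extg^2_\ccal(E^u,E^u)$, which we have arranged to vanish; granting that a curvature-free formal deformation datum is realized by a genuine uncurved deformation — the substance of the matter, where one must use that $\algdef_B$ is $1$-proximate and, crucially, that $\dt(\kfor)=k[u]$ is free on the single generator $u$ — we obtain $B_t$. Finally, invoking the homotopy-commutative triangle $\M_B\circ\ecal_{(\ccal,E^u)}\simeq\rho_{(\ccal,E^u)}$ of Remark \ref{remagendef} together with the compatibility of $\rho_{(\ccal,E^u)}$ with $\theta_\ccal$, I would compute $\theta_\ccal^t(\M_B^t(B_t))=\alpha$ in $\pi_0\catdefa_\ccal(\kfor)$, which gives the asserted surjectivity on $\pi_0$.
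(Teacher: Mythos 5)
Your construction of $E^u$ and the proof that it remains a compact generator are correct, and your nilpotence argument (powers of $u_E$ land in $\extg^{2j}_\ccal(E,E)=0$ for $j\gg 0$) is in fact slightly cleaner than the paper's, which reduces to compact test objects. The trouble starts at the lifting step. You claim that $\alpha$ lifts to $\tilde\alpha\in\simdefa_{(\ccal,E^u)}(\kfor)$ \emph{because} the single class $\chi_{E^u}(\phi_u)\in\extg^2_\ccal(E^u,E^u)$ vanishes. But the fiber sequence of Proposition \ref{propfibseq} only identifies the homotopy fiber of $\simdefa_{(\ccal,E^u)}(\kfor)\to\catdefa_\ccal(\kfor)$ over the \emph{basepoint}; it yields exactness of pointed sets in the middle, i.e. it detects which points land in the trivial component, and says nothing about whether the fiber over $\alpha$ is nonempty. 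Since $\kfor$ is not a square-zero extension, there is no principal structure and no single obstruction class here: a lift amounts to lifting the whole $\et$-algebra map $k[u]\to\xi(\ccal)$ through $\xi(\ccal,E^u)\to\xi(\ccal)$ (equivalently, a coherent tower of lifts over all $k[t]/t^i$), and the vanishing of one class in $\extg^2$ does not accomplish that. The paper's proof avoids obstruction theory entirely and constructs the lift \emph{explicitly}: unwinding definitions via [HA, Thm 4.8.4.1], a point of $\lmod_{\lmod_{k[u]}^\te}(\prlkp)^\simeq_{(\ccal,E^u)}$ lying over $\ccal_u$ is precisely a right $k$-module structure, in $\ccal_u$, on an object lifting $E^u$; and $E^u$ carries one because it is (up to shift) the \emph{cofree} module $E^k$, computed from the fiber sequence $k[u]\to k[u][-2]\to k$ in $\lmod_{k[u]}$. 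Ironically, your own definition $E^u=k\te_{k[u]}E$ already carries exactly such a structure, since $k$ is an algebra object of $\lmod_{k[u]}^\te$ and hence $k\te_{k[u]}E$ is canonically a $k$-module in $\ccal_u$; you discard this structure and retain only its shadow in $\extg^2$, which is too weak to recover the lift.

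The second gap is the step you explicitly ``grant'': passing from the formal-moduli datum to an honest deformation $B_t\in\algdef_B(\kfor)$. This is not a side condition to be granted; together with the lift above it is the substance of the proposition, and the property you invoke ($\algdef_B$ being $1$-proximate) is insufficient, since $1$-proximateness gives \emph{injectivity} of $\algdef_B(\kfor)\to\algdefa_B(\kfor)$ on $\pi_0$, whereas you need \emph{surjectivity}. The paper closes this using Proposition \ref{propalgdefnfmp}: because $\ccal$ is tamely compactly generated and $E^u$ is compact, $B=\uend_\ccal(E^u)$ is $n_0$-connective for some integer $n_0$, so $\algdef_B$ is an actual formal $\et$-moduli problem and the unit map $\upsilon:\algdef_B(\kfor)\to\algdefa_B(\kfor)$ is a homotopy equivalence; $B_t$ is then $\upsilon^{-1}$ applied to the image of the structured lift, and the concluding diagram chase you outline (Remark \ref{remagendef} together with the commuting squares relating $\ecal$, $\M$ and $\theta$) is indeed how the paper finishes. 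So your outline has the right skeleton, but the two steps that carry the weight --- producing the lift to $\simdefa_{(\ccal,E^u)}(\kfor)$ and descending it to $\algdef_B(\kfor)$ --- are respectively incorrectly justified and missing.
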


\begin{proof}[Proof of Theorem \ref{thmcatdeffor}] 
By \cite[Prop 5.3.21]{dagX} the map $\theta_\ccal: \catdefc_\ccal \mo \catdefa_\ccal$ is $(-1)$-truncated. This implies that the induced map  
$$\theta_\ccal^t:\map_{\fmpet}  (\spf(\kfor), \catdefc_\ccal) \mo \map_{\fmpet}(\spf(\kfor), \catdefa_\ccal)$$ 
is $(-1)$-truncated as well. By Proposition \ref{propgencatdef} we have that the map $\theta_\ccal^t$ is surjective on $\pi_0$, which proves that it is a homotopy equivalence. 
\end{proof}

Here is another consequence of Proposition \ref{propgencatdef}. 

\begin{theo}\label{thmgenfordef}
Let $k$ be a field and let $\ccal$ be tamely compactly generated $k$-linear $\infty$-category which has a compact generator $E\in \ccal$. Let $\ccal_t=\{\ccal_i\}_i$ be a compactly generated formal deformation of $\ccal$. Then there exists another compact generator $E^u$ of $\ccal$ and a formal deformation $B_t=\{B_i\}_i$ of its endomorphism algebra $B=\uend_\ccal (E^u)$ such that for every $i\geq 1$ there exists a $k$-linear equivalence $\ccal_i\simeq \lmod_{B_i}$. In particular, each $\ccal_i$ is tamely compactly generated and admits a compact generator. 
\end{theo}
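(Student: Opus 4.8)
The plan is to deduce this theorem from Proposition \ref{propgencatdef} and Theorem \ref{thmcatdeffor} by a formal argument, the only genuinely new ingredient being the passage from an equivalence of formal deformations to a level-wise statement. First I would regard the given compactly generated formal deformation $\ccal_t=\{\ccal_i\}_i$ as a point of the space $\catdefc_\ccal(\kfor)$ via the identification $\catdefc_\ccal(\kfor)\simeq \lim_i \catdefc_\ccal(k[t]/t^i)$ of Remark \ref{remafordef}, and set $\alpha:=\theta_\ccal^t(\ccal_t)\in \catdefa_\ccal(\kfor)$. Applying Proposition \ref{propgencatdef} to this particular point $\alpha$ produces a compact generator $E^u$ of $\ccal$ and a formal deformation $B_t=\{B_i\}_i$ of $B=\uend_\ccal(E^u)$ such that $\theta_\ccal^t(\M_B^t(B_t))$ and $\alpha=\theta_\ccal^t(\ccal_t)$ lie in the same connected component of $\pi_0\catdefa_\ccal(\kfor)$.

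Next I would invoke Theorem \ref{thmcatdeffor}: since $\theta_\ccal^t$ is a homotopy equivalence it is in particular injective on $\pi_0$, so $\M_B^t(B_t)$ and $\ccal_t$ already lie in the same connected component of $\catdefc_\ccal(\kfor)$. By the construction of $\M_B$ in Notation \ref{notamoduledef} together with the Schwede--Shipley equivalence $\ccal\simeq \lmod_B$ of Remark \ref{remagendef} (available because $E^u$ is a compact generator and $\ccal$ is tamely compactly generated), the point $\M_B^t(B_t)$ is the formal family $\{\lmod_{B_i}\}_i$. A path between these two points in $\catdefc_\ccal(\kfor)\simeq \lim_i\catdefc_\ccal(k[t]/t^i)$ projects, for each $i$, to a path in $\catdefc_\ccal(k[t]/t^i)$ connecting $\ccal_i$ to $\lmod_{B_i}$; this yields the desired $k[t]/t^i$-linear equivalences $\ccal_i\simeq \lmod_{B_i}$.

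For the final ``in particular'' assertion I would argue that each $\lmod_{B_i}$ is compactly generated by the free module $B_i$, which is a compact generator, and that it is tamely compactly generated. The latter amounts to $\extg^m_{\lmod_{B_i}}(B_i,B_i)\simeq \pi_{-m}B_i=0$ for $m\gg 0$. Since $\ccal$ is tamely compactly generated we have $\extg^m_\ccal(E^u,E^u)=\pi_{-m}B=0$ for $m\gg 0$, so the $k$-module $B$ is bounded above; because $B_i$ is a deformation of $B$ over the artinian algebra $k[t]/t^i$, its underlying $k$-module carries a finite filtration whose successive quotients are equivalent to $B$, whence $B_i$ is bounded above as well and $\ccal_i\simeq\lmod_{B_i}$ is tamely compactly generated with compact generator $B_i$.

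The main obstacle is not in these bookkeeping steps but is entirely concentrated in the two results already invoked: the surjectivity on $\pi_0$ furnished by Proposition \ref{propgencatdef} (which builds the untwisted generator $E^u$ by killing the $u$-action and exhibits the accompanying deformation $B_t$ of its endomorphism algebra) and the injectivity coming from Theorem \ref{thmcatdeffor}. Once both are available, the only point requiring genuine care is that an equivalence of formal deformations in the limit space $\catdefc_\ccal(\kfor)$ restricts to equivalences at each finite level $k[t]/t^i$, which is guaranteed by the description of $\catdefc_\ccal(\kfor)$ as $\lim_i\catdefc_\ccal(k[t]/t^i)$ in Remark \ref{remafordef}.
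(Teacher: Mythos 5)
Your proof is correct and follows essentially the same route as the paper: apply Proposition \ref{propgencatdef} to $\alpha=\theta^t_\ccal(\ccal_t)$ to obtain $E^u$ and $B_t$, then use the injectivity of $\theta^t_\ccal$ on $\pi_0$ supplied by Theorem \ref{thmcatdeffor} to identify $\{\lmod_{B_i}\}_i$ with $\{\ccal_i\}_i$ level by level. The only cosmetic difference is at the end: where you argue directly (via the finite $t$-adic filtration with graded pieces $B$) that each $B_i$ inherits the connectivity bound from $B$, the paper instead cites Proposition \ref{propalgdefnfmp}, whose proof contains exactly this observation about deformations of $n$-connective modules.
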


\begin{proof} 
Let $\ccal_t=\{\ccal_i\}_i$ be a compactly generated formal deformation of $\ccal$. Consider the vertex $\alpha=\theta_\ccal^t (\ccal_t)\in \catdefa_\ccal(\kfor)$. Let $E^u\in \ccal$ be the compact generator provided by Proposition \ref{propgencatdef} so that there exists a formal deformation $B_t=\{B_i\}_i$ of $B=\uend_\ccal (E^u)$ such that the connected component of $B_t$ is sent to the connected component of $\alpha$ by the map $\theta_\ccal^t\circ m_B^t$. But $\theta_\ccal^t$ is injective on $\pi_0$ by Theorem \ref{thmcatdeffor}, so that for every $i\geq 1$ there exists a $k$-linear equivalence $\lmod_{B_i}\simeq \ccal_i$. As $\ccal$ is tamely compactly generated, the $\eo$-algebra $B$ is $n_0$-connective for some integer $n_0$. By Proposition \ref{propalgdefnfmp} this implies that for every $i\geq 1$ the $\eo$-algebra $B_i$ is $n_0$-connective, and therefore that $\ccal_i$ is tamely compactly generated. 
\end{proof}

\begin{rema}\label{rematcgdef}
In the situation of Theorem \ref{thmgenfordef}, the fact that each $\ccal_i$ is tamely compactly generated for every compactly generated formal deformation $\ccal_t$ of $\ccal$ is already a consequence of \cite[Lem 5.3.31]{dagX} and does not require the assumption that $\ccal$ has a compact generator.  
\end{rema}

\begin{proof}[Proof of Proposition \ref{propgencatdef}]
Let $\alpha\in \catdefa_\ccal (\kfor)$. Recall from Remark \ref{remaku} the existence of a homotopy equivalence $\catdefa_\ccal(\kfor)\simeq \map_{\algetk} (k[u], \xi(\ccal))$ with the space of left $k[u]$-linear structures on $\ccal$. We denote by $\ccal_u$ the left $k[u]$-linear structure corresponding to $\alpha$. Let $E$ be a compact generator of $\ccal$. We exhibit another compact generator of $\ccal$ on which $k[u]$ acts trivially. The left $k[u]$-action gives a map of $k$-modules $k[u]\mo \uend_\ccal(E)$ and consider the map $\phi_E:E\mo E[2]$ given by the image of $u$. Define $E^u$ to be the homotopy fiber of $\phi_E$ in $\ccal$, it is well defined up to a contractible space of choice. We claim that the object $E^u$ is still a compact generator of $\ccal$. That it is compact follows from the fact that the subcategory $\ccal^c\subseteq \ccal$ spanned by compact objects is stable under finite colimits in $\ccal$ and that $\ccal$ is stable. To see that it is a generator, let $F$ be an object of $\ccal$ such that $\umap_\ccal(E^u,F)\simeq 0$ and we must prove that $F\simeq 0$. For this we can suppose that $F$ is compact. The fiber sequence $E^u\mo E\mo E[2]$ induces a fiber sequence of $k$-module spectra
$$\umap_\ccal(E,F)[-2] \mo \umap_\ccal (E,F)\mo \umap_\ccal (E^u,F)\simeq 0$$
which implies that $\umap_\ccal (E,F)\simeq \umap_\ccal (E,F)[-2]$. Because $\ccal$ is tamely compactly generated, we deduce that $\umap_\ccal (E,F)\simeq 0$. But $E$ is a generator of $\ccal$ and thus $F\simeq 0$. 

We adopt the notation 
$$\lmod_{\lmod_{k[u]}^\te} (\prlk)^\simeq_{\ccal}=\lmod_{\lmod_{k[u]}^\te} (\prlk)^\simeq\times_{(\prlk)^\simeq } \{\ccal\}$$ 
for the space of left $k[u]$-linear structures on $\ccal$ and similarly for left actions of $k[u]$ on the pair $(\ccal, E)$. 
Consider the diagram

$$\xymatrix{ \simdefc_{(\ccal, E^u)}(\kfor) \ar[r]^-{ } \ar[d]_-{\ecal} &  \simdefa_{(\ccal, E^u)}(\kfor) \ar[d]^-{ }  \ar[r]^-\sim  & \lmod_{\lmod_{k[u]}^\te} (\prlkp)^\simeq_{(\ccal,E^u)} \ar[dd]^-{\pi}   \\ \algdef_{\uend_\ccal (E^u)}(\kfor)  \ar[r]^-{\upsilon } \ar[d]_-{\M}  & \algdefa_{\uend_\ccal (E^u)}(\kfor) \ar[d] \\ \catdefc_\ccal(\kfor) \ar[r]^-{ \theta}  & \catdefa_\ccal(\kfor) \ar[r]^-\sim  &   \lmod_{\lmod_{k[u]}^\te} (\prlk)^\simeq_{\ccal}}$$

where the notations $\ecal$, $\M$, and $\theta$ have been removed indices for simplicity. By construction, the two smaller squares in this diagram commutes up to natural homotopy. Moreover the outer square commutes up to natural homotopy by virtue of Remark \ref{remagendef}. Therefore the right square is homotopy commutative as well, up to natural homotopy. The two horizontal maps in the right square are equivalences by \cite[Thm 5.3.16]{dagX} and Proposition \ref{propsimdefa}.  Because $\ccal$ is supposed to be tamely compactly generated and $E$ compact, the $\eo$-algebra $\uend_\ccal(E)$ is $n_0$-connective for some integer $n_0$ which implies that the map $\upsilon$ is a homotopy equivalence by Proposition \ref{propalgdefnfmp}. 

We claim that the object $(\ccal_u, E^u)$ of $\prlkp$ provides a lift of $\ccal_u$ through the map $\pi$. Unwinding the definitions, we must show that $E^u$ admits a lift through the functor $\fun_{\lmod_{k[u]}^\te} (\modk, \ccal_u)\mo \fun_{\lmod_{k[u]}^\te} (\lmod_{k[u]}, \ccal_u)\simeq \ccal_u$ induced by the monoidal functor $\lmod_{k[u]}^\te\mo \modk^\te$ itself induced by the augmentation map $k[u]\mo k$. By the right handed variant of \cite[Thm 4.8.4.1]{ha}, there exists an equivalence of $\oo$-categories $\fun_{\lmod_{k[u]}^\te} (\modk, \ccal_u)\simeq \rmod_k(\ccal_u)$. Therefore we must show that there exists a lift of $E^u$ through the forgetful functor $\rmod_k(\ccal_u)\mo \ccal_u$. By a variant of \cite[Cor 4.2.4.4]{ha} this forgetful functor admits a right adjoint $\ccal_u\mo \rmod_k(\ccal_u)$ given by the cofree right $k$-module functor which to an object $F\in \ccal_u$ assigns the right $k$-module given by the exponential $F^k$ of $F$ by the left $k[u]$-module $k$. There is a fiber sequence $k[u]\mo k[u][-2]\mo k$ in $\lmod_{k[u]}$, where the first map is multiplication by $u$. This implies the existence of a fiber sequence $E^k\mo E[2]\mo E$ in $\ccal_u$ and thus of an equivalence $E^k\simeq E^u$ in $\ccal_u$, which shows that $E^k$ is sent to $E^u$ via the forgetful functor $\rmod_k(\ccal_u)\mo \ccal_u$. 

We deduce that the pair $(\ccal_u, E^u)$ provides a well defined object in the $\oo$-category $\lmod_{\lmod_{k[u]}^\te} (\prlkp)_{(\ccal,E^u)}$ and therefore, by virtue of the commutative diagram above, of a well defined object $B_t$ in $\algdef_{\uend_\ccal (E^u)}(\kfor)$ corresponding to a formal deformation of the $\eo$-algebra $B=\uend_\ccal(E^u)$. 
By commutativity of the diagram, we have the equality of connected components $[\theta\circ \M (B_t)]=[\alpha]$ in $\pi_0\catdefa_\ccal(\kfor)$. 

\end{proof}

\appendix


\section{Deformations as !-group actions}\label{ssdefindcoh}

As a consequence of Corollary \ref{corassfmp}, we give a description of the formal $\et$-moduli problem $\catdefa_\ccal$ for $\ccal$ a $k$-linear $\oo$-category over a field $k$ in terms of $!$-actions of the formal group $\Omega\spf(A)$ on $\ccal$.

\begin{nota}\label{notaaut}
Let $k$ be a field and let $\ccal$ be a $k$-linear $\oo$-category, we set the notations $\aut_\ccal=\Omega\catdef_\ccal$ and $\auts_\ccal=\Omega\catdefa_\ccal$. The natural transformation $\theta_\ccal:\catdef_\ccal\mo \catdefa_\ccal$ induces a natural transformation $\aut_\ccal\mo \auts_\ccal$. 
\end{nota}

\begin{rema}\label{remautcg}
Let $k$ be a field and let $\ccal$ be a compactly generated $k$-linear $\oo$-category. Because the embedding $\catdefc_\ccal\mo \catdef_\ccal$ of compactly generated deformations into all deformations is fully faithful, the induced map $\Omega\catdefc_\ccal\mo \Omega\catdef_\ccal=\aut_\ccal$ is a homotopy equivalence. 
\end{rema}

\begin{rema}\label{remaauts} Let $k$ be a field and let $\ccal$ be a $k$-linear $\oo$-category. By \cite[Cor 5.3.8]{dagX} the functor $\catdef_\ccal$ is a $2$-proximate formal $\et$-moduli problem, which implies by Remark \ref{remaproxom} that the functor $\aut_\ccal$ is a $1$-proximate formal $\et$-moduli problem and that the induced map $\Omega \aut_\ccal\mo \Omega \auts_\ccal$ is an equivalence. By Corollary \ref{corassfmp}, we deduce that the induced map $L_2(\aut_\ccal)\mo \auts_\ccal$ is an equivalence. 

If in addition $\ccal$ is tamely compactly generated, then by \cite[Prop 5.3.21]{dagX} we have that $\catdefc_\ccal$ is $1$-proximate and we deduce from Remark \ref{remautcg} that $\aut_\ccal$ is a formal $\et$-moduli problem and that the natural map $\aut_\ccal\mo \auts_\ccal$ is an equivalence. 
\end{rema}

\begin{rema}\label{remaautcequiv}
Let $k$ be a field and let $\ccal$ be a $k$-linear $\oo$-category. It can be shown that there exists a natural transformation $\aut_\ccal\mo \objdefet_{\id_\ccal}$ which is an equivalence, where $\id_\ccal$ is seen as an object of the $k$-linear $\oo$-category $\End_k(\ccal)$ of $k$-linear endofunctors of $\ccal$. Having this, we see that for every artinian $\et$-algebra $A$ the space $\aut_\ccal(A)$ parametrizes pairs $(F_A,u)$ with $F_A$ an object of the $\oo$-category $\End(\ccal)\te_k \Mod_A \simeq \funl_A(\ccal\te_k A, \ccal\te_k A)$ (which is necessarily an $A$-linear autoequivalence of $\ccal\te_k A$) and $u:F_A\te_A k \simeq \id_\ccal$ an equivalence in $\End(\ccal)$. 
Moreover the space $\auts_\ccal (A)$ is the classifying space of right Ind-coherent $A$-modules in $\End(\ccal)$ which deforms $\id_\ccal$, or in other words of pairs $(F_A, u)$ with $F_A$ an object in $\End(\ccal)\te_k \Mod^!_A$ and $u:F_A\te_A k\simeq \id_\ccal$ an equivalence. 
\end{rema}

\begin{cor}\label{corcatdefsactions}
Let $k$ be a field and $\ccal$ a $k$-linear $\infty$-category. Then the formal $\et$-moduli problem $\catdefa_\ccal$ is given by the formula
$$\catdefa_\ccal(A)\simeq \map_{\mongp_{\eo}(\fmpet)} (\Omega \spf(A), \auts_\ccal).$$
If moreover $\ccal$ is tamely compactly generated, then the formal $\et$-moduli problem $\catdefa_\ccal$ is given by the formula
$$\catdefa_\ccal(A)\simeq \map_{\mongp_{\eo}(\fmpet)} (\Omega \spf(A), \aut_\ccal).$$
\end{cor}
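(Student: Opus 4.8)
The plan is to read off both formulas from Corollary \ref{corassfmp}, applied to the deformation context $(\algetka, k)$ of Example \ref{exdefconalgen} (with $n = 2$). This context admits a deformation theory---the $\et$-Koszul duality functor $\dt$---and $\algetka$ is a pointed $\oo$-category, so Corollary \ref{corassfmp} applies and here $\fmpa = \fmpet$. The whole point is that the substantial content (that $\Omega\colon \fmpet \mo \mongp_{\eo}(\fmpet)$ is an equivalence, Proposition \ref{propfmpgp}, and the ensuing description of $L$ on proximate moduli) has already been established, so what remains is essentially bookkeeping.

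For the first formula I would apply Corollary \ref{corassfmp} with $m = 1$ to the functor $X = \catdefa_\ccal$ itself. Since $\catdefa_\ccal$ is by construction a formal $\et$-moduli problem, it is $0$-proximate and therefore $1$-proximate, and the localization fixes it: $L_1 \catdefa_\ccal \simeq \catdefa_\ccal$. Corollary \ref{corassfmp} then gives, for every artinian $\et$-algebra $A$,
$$\catdefa_\ccal(A) \simeq L_1 \catdefa_\ccal(A) \simeq \map_{\mongp_{\eo}(\fmpet)}(\Omega \spf(A), \Omega \catdefa_\ccal).$$
By Notation \ref{notaaut} we have $\auts_\ccal = \Omega \catdefa_\ccal$, the loop space being regarded as a grouplike $\eo$-monoid via the factorization of Notation \ref{notagpmon}; substituting yields the first displayed equivalence.

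For the second formula I would invoke the tame hypothesis through Remark \ref{remaauts}: when $\ccal$ is tamely compactly generated, $\aut_\ccal$ is itself a formal $\et$-moduli problem and the natural transformation $\aut_\ccal \mo \auts_\ccal$ induced by $\theta_\ccal$ is an equivalence in $\fmpet$. This map is obtained by applying the loop functor to $\catdef_\ccal \mo \catdefa_\ccal$, hence is a morphism of grouplike $\eo$-monoid objects; since the forgetful functor $\mongp_{\eo}(\fmpet) \mo \fmpet$ is conservative, the underlying equivalence upgrades to an equivalence of $\eo$-monoids. Replacing $\auts_\ccal$ by $\aut_\ccal$ in the first formula then produces the second.

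The argument is short, and the only place requiring care---hence the main (mild) obstacle---is keeping track of the grouplike $\eo$-monoid structures. One must ensure that $\Omega\spf(A)$, $\auts_\ccal = \Omega\catdefa_\ccal$, and $\aut_\ccal = \Omega\catdef_\ccal$ are compared as objects of $\mongp_{\eo}(\fmpet)$ rather than merely of $\fmpet$, and that the equivalence of the tame case respects these structures; this is precisely what conservativity of the forgetful functor and the functoriality of the loop factorization provide.
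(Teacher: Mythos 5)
Your proof is correct and takes essentially the same route as the paper: the paper obtains the first formula directly from Proposition \ref{propfmpgp} together with the definition $\auts_\ccal=\Omega\catdefa_\ccal$ (your application of Corollary \ref{corassfmp} with $m=1$ to the $0$-proximate functor $\catdefa_\ccal$ is just the packaged form of that argument), and the second formula from Remark \ref{remaauts}, exactly as you do. Your additional care in checking that the equivalence $\aut_\ccal\simeq\auts_\ccal$ lives in $\mongp_{\eo}(\fmpet)$ (via functoriality of $\Omega$ and conservativity of the forgetful functor) only makes explicit a point the paper leaves implicit.
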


\begin{proof} 
The first assertion follows from the definition of $\auts_\ccal$ and from Proposition \ref{propfmpgp}. The second follows from the first combined with Remark \ref{remaauts}. 
\end{proof}

\begin{const} (Ind-coherent sheaves on formal $\eo$-moduli problems as a symmetric monoidal functor). 
Let $k$ be a field. Consider the cocartesian fibration $\rmod^!(\modk)\mo \algkart$ of \cite[Const 3.4.11]{dagX} which is classified by the functor $\nu:\algkart \mo \catbig$ given by $\nu(A)=\rmod^!_A$ and functoriality for a morphism $f:A\mo B$ in $\algkart$ is given by the $!$-pullback $f^!:\Mod^!_A\mo \Mod^!_B$. The functor $f^!$ preserves small colimits. It follows that $\nu$ factors through the subcategory $\prl\subseteq \catbig$ of presentable $\oo$-categories with functors that are left adjoints. The full subcategory $\algkart\subseteq \algk$ is stable under tensor product and therefore inherits a symmetric monoidal structure $\algkartte$ from $\algk^\te$. A variant of \cite[Thm 4.8.5.16]{ha} allows us to promote the functor $\nu$ to a symmetric monoidal functor $\nu^\te: \algkartte \mo \prlte$. 

Recall that the Yoneda embedding $j:\algkart\mo \fun_*(\algkart, \spaces)^{op}$ can be promoted to a symmetric monoidal functor $j^\te: \algkartte\mo \fun_*(\algkart, \spaces)^{op, \times}$ where we endow the $\oo$-category $\fun_*(\algkart, \spaces)$ with its cartesian monoidal structure. Applying the monoidal variant of \cite[Thm 5.1.5.6]{htt}, we deduce that $\nu^\te$ admits an essentially unique factorization as a composition 
$$\algkartte\lmos{j^\te}\fun_*(\algkart, \spaces)^{op, \times}  \lmos{(\rindcohte, (-)^!)}\prlte$$
such that the functor $(\rindcoh, (-)^!)$ preserves small limits (recall that a limit in $\prl$ can be computed as the limit of the corresponding diagram in $\catbig$). For any functor $X:\calgkart\mo \spaces$, the $\oo$-category $\rindcoh(X)$ is the $k$-linear $\oo$-category of right Ind-coherent sheaves on $X$ and the symmetric monoidal structure expresses the fact that for any two functors $X,Y:\algkart\mo \spaces$, the natural functor $\indcoh(X)\te_k \rindcoh(Y)\mo \rindcoh(X\times Y) $ is an equivalence in $\prl$. Similarly we can define left Ind-coherent sheaves. 

Passing to right adjoints (through the symmetric monoidal equivalence $(\prl)^{op, \te}\simeq \prrte$), we obtain a symmetric monoidal functor 
$$\fun_*(\algkart, \spaces)^{\times} \lmos{(\rindcohte, (-)_*)} \prrte$$
where the functoriality is now given by the pushforward. 
\end{const} 

Consider the lax symmetric monoidal inclusion $\prrte\subseteq \catbigte$. 

\begin{lem}\label{lemindcohpush}
Let $k$ be a field. The restriction of the composition  
$$\fun_*(\algkart, \spaces)^{\times} \lmos{(\rindcohte, (-)_*)} \prrte\subseteq \catbig^\times$$
to the symmetric monoidal full subcategory $\eofmpte$ spanned by formal $\eu$-moduli problems factors through the lax symmetric monoidal inclusion $\prlte\subseteq \catbig^\times$ and is symmetric monoidal, providing a symmetric monoidal functor 
$$(\rindcohte, (-)_*): \eofmpte \mo \prlte.$$
\end{lem}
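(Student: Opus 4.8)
The plan is to split the assertion into a factorization claim and a monoidality claim, and to observe that the latter is essentially inherited once the former is settled. The functor $(\rindcohte, (-)_*)\colon \fun_*(\algkart, \spaces)^\times \mo \prrte$ is already symmetric monoidal; its structure maps $\rindcoh(X)\te_k \rindcoh(Y)\mo \rindcoh(X\times Y)$ are equivalences for \emph{all} $X,Y$ (as recorded in the construction), and its value on the monoidal unit $\spf(k)$ is $\rmod^!_k\simeq \modk$, the unit of $\prl$. Because the lax symmetric monoidal inclusions $\prlte\subseteq \catbigte$ and $\prrte\subseteq \catbigte$ agree on objects, on tensor products, and on units, it will suffice to prove that, after restriction to $\eofmpte$, each pushforward $f_*$ lands in $\prl$; the symmetric monoidal structure then restricts from the $\prr$-valued functor to a genuinely $\prlte$-valued symmetric monoidal functor.

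The heart of the matter is therefore the factorization claim: for every morphism $f\colon X\mo Y$ of formal $\eu$-moduli problems, the pushforward $f_*\colon \rindcoh(X)\mo \rindcoh(Y)$ preserves small colimits. Since $f_*=(f^!)^R$ is a right adjoint between presentable stable $k$-linear $\oo$-categories it is automatically exact, and by the usual criterion it preserves all small colimits exactly when its left adjoint $f^!$ preserves compact objects. Thus the claim is reduced to a finiteness property of the $!$-pullback $f^!$.

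I would establish this first for corepresentables. For $X=\spf(A_1)$, $Y=\spf(A_2)$ and $f=\spf(g)$ coming from a map $g\colon A_2\mo A_1$ of artinian $\eu$-algebras, Koszul duality for modules identifies $\rindcoh(\spf(A_i))=\rmod^!_{A_i}$ with a module $\oo$-category over the Koszul dual $\du(A_i)$ and carries $f^!$ to a standard functor (restriction or extension of scalars) along the dual map $\du(A_1)\mo \du(A_2)$; the compact objects being the bounded coherent modules, the coconnectivity and local finiteness of $\du(A)$ for artinian $A$ (exactly as used in the proof of Lemma~\ref{lemalgrepspf}, via \cite[Prop 4.1.13]{dagX}) force $f^!$ to preserve this finiteness, so that $f_*$ is colimit preserving. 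To reach an arbitrary formal $\eu$-moduli problem, I would write $X$ as a colimit of corepresentables $\spf(A_i)$ in $\fmpeu$ and use the limit-preservation of $(\rindcohte, (-)^!)$ to present $\rindcoh(X)$ as the corresponding limit of the $\rmod^!_{A_i}$ along the $!$-pullbacks, thereby controlling its compact objects and the action of $f^!$ on them.

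The step I expect to be the main obstacle is precisely this propagation from corepresentables to all of $\eofmp$: compact objects in a limit of presentable $\oo$-categories are delicate, so identifying the compacts of $\rindcoh(X)$ and checking their preservation under $f^!$ for a general $X$ is the technical crux. A cleaner alternative would be to invoke a global identification of $\rindcoh(X)$ with a module $\oo$-category over the Koszul-dual $\eu$-algebra $\tone(X)$ (in the spirit of \cite[Prop 5.3.3.4]{preygelth}), under which every pushforward $f_*$ becomes a manifestly colimit-preserving module functor; this would trade the compact-object bookkeeping for a naturality-and-monoidality check, where I would then concentrate the remaining work.
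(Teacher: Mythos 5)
Your reduction of the lemma to the assertion that every pushforward $f_*$ preserves small colimits, and your remark that the symmetric monoidal structure then restricts from the $\prr$-valued functor, match the paper's strategy; the problem is the route you propose for proving that assertion, and the gap you flag yourself is genuine, not a matter of bookkeeping. First, the criterion ``$f_*$ preserves all small colimits if and only if its left adjoint $f^!$ preserves compact objects'' presupposes that the source of $f^!$, namely $\rindcoh(Y)$ for an \emph{arbitrary} formal $\eo$-moduli problem $Y$, is compactly generated; at that stage of your argument you know this only for corepresentables, so the criterion cannot even be invoked for the maps you care about. Second, the propagation step conflates two kinds of colimits: a general $X\in\fmpeo$ is a colimit of corepresentables $\spf(A_i)$ \emph{computed in} $\fmpeo$, i.e.\ after applying the localization $L$, whereas the limit-preservation of $(\rindcohte,(-)^!)$ recorded in the construction is against colimits computed in $\fun_*(\algkart,\spaces)$; nothing available at that point shows that $\rindcoh$ inverts the unit maps $Z\mo LZ$, so the presentation $\rindcoh(X)\simeq\lim_i\rmod^!_{A_i}$ you want is not justified. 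Even granting the analogous presentation coming from the pointwise right Kan extension formula (limit over the category of points of $X$), compact objects of a limit of presentable $\oo$-categories are not controlled by the compacts of the terms --- which is exactly the crux you leave open.

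The fix is the alternative you mention in your final sentence and defer, and it is precisely the paper's proof, available at no extra cost. Koszul duality for modules, \cite[Prop 3.5.1]{dagX} --- Lurie's cited result, not something to be imported ``in the spirit of'' Preygel --- applies to \emph{every} formal $\eo$-moduli problem, not only to corepresentables: for $f:X\mo Y$ with associated augmented $\eo$-algebras $A=\tone(X)$, $B=\tone(Y)$ and $\varphi=\tone(f):A\mo B$, it provides equivalences $\rindcoh(X)\simeq\lmod_A$ and $\rindcoh(Y)\simeq\lmod_B$ under which $f_*$ is identified with the base-change functor $\varphi^*$, the left adjoint of the restriction functor $\varphi_*:\lmod_B\mo\lmod_A$; hence $f_*$ preserves small colimits outright, with no case distinction and no compact objects anywhere. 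Note that this global identification is not an ``alternative'' to be weighed against your main route so much as its missing ingredient: the inputs your route needs (compact generation of $\rindcoh(Y)$ for general $Y$, control of compacts under $f^!$) are exactly what it supplies, and once it is in place the corepresentable analysis and the adjoint-criterion detour become superfluous. The monoidality statement is then immediate, either as you propose or, as in the paper, directly from the symmetric monoidal structure already present on $(\rindcohte,(-)^!):(\eofmpte)^{op}\mo\prlte$.
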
 

\begin{proof} 
Let $f:X\mo Y$ be a map of $\eo$-formal moduli problems over $k$ and let $f_*:\rindcoh(X)\mo \rindcoh(Y)$ be the induced functor. We wish to show that $f_*$ admits a right adjoint, or equivalently by the adjoint functor theorem \cite[Cor 5.5.2.9]{htt} that it preserves small colimits. Let $A$ and $B$ be the augmented $\eo$-algebras over $k$ corresponding to $X$ and $Y$ respectively through the equivalence $\fmpeo\mo \algka$. Let $\varphi:A\mo B$ be the map corresponding to $f$. By the Koszul duality for modules \cite[Prop 3.5.1]{dagX} there exists equivalences $\rindcoh(X)\simeq \lmod_A$ and $\rindcoh(Y)\simeq \lmod_B$ and the induced diagram 
$$\xymatrix{ \rindcoh(X) \ar[r]^-{f_* } \ar[d]^-{\wr } & \rindcoh(Y)\ar[d]^-{\wr } \\ \lmod_A  \ar[r]^-{\varphi^* } & \lmod_B }$$
commutes up to homotopy, where $\varphi^*$ is the left adjoint to the forgetful functor $\varphi_*:\lmod_B \mo \lmod_A$ along $\varphi$. The functor $\alpha^*$ preserves small colimits which implies that $f_*$ has the same property. 
The second assertion follows directly from the existence of the symmetric monoidal functor $(\rindcohte, (-)^!) : (\eofmpte)^{op}\mo \prlkte$. 
\end{proof}

\begin{nota}
The symmetric monoidal functor $(\rindcohte, (-)_*)$ provided by Lemma \ref{lemindcohpush} will be simply denoted by $\rindcohte$. Consider the unit $\spec(k)$ in $\eofmpte$ as a commutative algebra object. We have an equivalence of commutative algebra objects $\rindcoh(\spec(k))\simeq \modk$ in $\prlte$. Therefore $\indcohte$ induces a symmetric monoidal functor 
$$\eofmpte\simeq \Mod_{\spec(k)} (\eofmp)^\times \lmos{\indcohte} \Mod_{\modk^\te} (\prl)^\te =\prlkte$$
into the symmetric monoidal $\oo$-category of presentable $k$-linear $\oo$-categories. 

For any $\eo$-formal moduli problem $X$ over $k$, the space $\map_{\eofmp} (\spec(k), X)\simeq X(k)$ is contractible, which implies that the projection $(\eofmp)_{\spec(k)/}\mo \fmp$ is an equivalence of $\oo$-categories. Because $\spec(k)$ and $\modk$ are the units of the symmetric monoidal $\oo$-categories $\eofmpte$ and $\prlkte$ respectively, we have natural symmetric monoidal structures $((\eofmp)_{\spec(k)/})^\times$ and $((\prlk)_{\modk/})^\te$ on the slice $\oo$-categories (see \cite[Rem 2.2.2.5]{ha}). Moreover the symmetric monoidal functor $\rindcohte$ induces a symmetric monoidal functor 
$$\eofmpte \simeq ((\eofmp)_{\spec(k)/})^\times\lmos{\rindcohtep} ((\prlk)_{\modk/})^\te.$$
The data of an object of the $\oo$-category $(\prlk)_{\modk/}$ is essentially given by a pair $(\ccal, E)$ with $\ccal$ a $k$-linear $\oo$-category and $E$ an object of $\ccal$. The functor $\rindcohp$ sends a formal moduli problem $X$ to the pair $(\rindcoh(X), k)$ where $\indcoh(X)$ is the $k$-linear $\oo$-category of Ind-coherent sheaves on $X$ and $k$ is the skyscraper sheaf with value $k$ at the $k$-point of $X$. If $A$ is the augmented $\eo$-algebra of $X$, the sheaf $k$ corresponds to the left $A$-module $A$ through the equivalence $\rindcoh(X)\simeq \lmod_A$.  
\end{nota}

We have the following consequence of \cite[Thm 4.8.5.11]{ha}.

\begin{prop}\label{propadj}
Let $k$ be a field. The symmetric monoidal functor 
$$\rindcohtep: \eofmpte \mo ((\prlk)_{\modk/})^\te$$ 
admits a lax symmetric monoidal right adjoint which sends a pair $(\ccal, E)$ to the formal $\eo$-moduli problem $\objdefa_{(\ccal, E)}$ of deformations of $E$ in $\ccal$. 
\end{prop}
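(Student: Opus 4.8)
The plan is to identify $\rindcohtep$, after Koszul duality, with the functor assigning to an augmented $\eo$-algebra its $\oo$-category of modules pointed at the free module, and then to read off the right adjoint from the endomorphism-algebra functor of \cite[Thm 4.8.5.11]{ha}. First, since $\rindcohtep$ is symmetric monoidal, any right adjoint of its underlying functor $\rindcohp$ is automatically lax symmetric monoidal by \cite[Cor 7.3.2.7]{ha}; it therefore suffices to construct this right adjoint on underlying $\oo$-categories and to compute its value on a pair $(\ccal,E)$. Recall from the construction preceding the statement that, under the equivalence $\tfr^{(1)}\colon\eofmp\xrightarrow{\sim}\algka$ and Koszul duality for modules \cite[Prop 3.5.1]{dagX}, the underlying functor $\rindcohp$ is identified with the functor $\algka\to (\prlk)_{\modk/}$ sending an augmented $\eo$-algebra $A$ to the pair $(\lmod_A,A)$, the distinguished object being the free module $A$ (the image of the skyscraper sheaf $k$).

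Next I would factor this functor as $\algka\xrightarrow{U}\algk\xrightarrow{\Phi}(\prlk)_{\modk/}$, where $U$ forgets the augmentation and $\Phi$ is the modules functor $B\mapsto (\lmod_B,B)$. By \cite[Thm 4.8.5.11]{ha} together with the universal property of the endomorphism algebra, $\Phi$ admits a right adjoint given on objects by $(\ccal,E)\mapsto \uend_\ccal(E)$: a morphism $(\lmod_B,B)\to(\ccal,E)$ in $(\prlk)_{\modk/}$ is exactly the datum of a right $B$-module structure on $E$, and such structures are classified by maps of $\eo$-algebras $B\to\uend_\ccal(E)$. The forgetful functor $U\colon\algka=(\algk)_{/k}\to\algk$ in turn admits a right adjoint $R$ with $R(B)=k\oplus B$ (the product $B\times k$ in $\algk$, augmented by the projection onto $k$), since there is a natural equivalence $\map_{\algka}(A,B\times k)\simeq\map_{\algk}(A,B)$ for augmented $A$. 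Composing right adjoints, $\rindcohp=\Phi\circ U$ admits the right adjoint $R\circ\uend$, given on objects by $(\ccal,E)\mapsto k\oplus\uend_\ccal(E)$.

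Finally I would transport this back through Koszul duality: under $\tfr^{(1)}\colon\eofmp\simeq\algka$ the augmented $\eo$-algebra $k\oplus\uend_\ccal(E)$ corresponds to the formal $\eo$-moduli problem $\Psi_{k\oplus\uend_\ccal(E)}$, which by \cite[Thm 5.2.8]{dagX} is precisely $\objdefa_{(\ccal,E)}$ (its tangent spectrum being $\uend_\ccal(E)[1]$). This exhibits the sought right adjoint $G$ and identifies $G(\ccal,E)\simeq\objdefa_{(\ccal,E)}$, the lax symmetric monoidal structure being the one supplied by \cite[Cor 7.3.2.7]{ha}. As a cross-check, the value of $G$ is pinned down directly by the natural chain $G(\ccal,E)(A)\simeq \map_{\eofmp}(\spf(A),G(\ccal,E))\simeq \map_{(\prlk)_{\modk/}}((\lmod_{\du(A)},\du(A)),(\ccal,E))\simeq \map_{\algk}(\du(A),\uend_\ccal(E))\simeq \objdefa_{(\ccal,E)}(A)$, using Lemma \ref{lemalgrepspf}, \cite[Thm 4.8.5.11]{ha} and \cite[Thm 5.2.8]{dagX}.

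The main obstacle I expect is the careful application of \cite[Thm 4.8.5.11]{ha} in the presentable $k$-linear setting: one must check that the relative modules functor is genuinely the underlying functor of $\rindcohp$ — in particular that the $\modk$-pointing corresponds to the free module and not to the augmentation module $k$ — and that its right adjoint is the endomorphism-algebra functor with the stated universal property. The remaining steps, namely the augmentation adjustment $U\dashv R$ and the transport through $\tfr^{(1)}$ combined with \cite[Thm 5.2.8]{dagX}, are formal, and the monoidal refinement is automatic.
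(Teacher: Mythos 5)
Your proof is correct and takes essentially the same route as the paper: reduce to the underlying functor via \cite[Cor 7.3.2.7]{ha}, identify $\rindcohp$ through Koszul duality \cite[Prop 3.5.1]{dagX} with the pointed modules functor $A\mapsto (\lmod_A, A)$ on $\algka$, extract the endomorphism-algebra right adjoint from \cite[Thm 4.8.5.11]{ha}, and conclude with \cite[Thm 5.2.8]{dagX}. Your explicit factorization through the forgetful functor $U:\algka\mo\algk$ together with the adjunction $U\dashv (B\mapsto k\oplus B)$ is a welcome precision of a step the paper absorbs into its single citation of \cite[Thm 4.8.5.11]{ha} (which, strictly speaking, gives the right adjoint $(\ccal,E)\mapsto \uend_\ccal(E)$ of the unaugmented modules functor only).
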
 

\begin{proof} 
By \cite[Cor 7.3.2.7]{ha}, it suffices to show that the underlying functor $\rindcohp$ of $\rindcohtep$ admits a right adjoint. By Koszul duality for modules \cite[Prop 3.5.1]{dagX}, there exists a homotopy commutative diagram 
$$\xymatrix{  \eofmp \ar[rr]^-{\rindcohp } \ar[d]_-{\tone } && \prlkp \\ \algka  \ar[rru]_-{\Theta_* } }$$
where $\tone$ is the equivalence of \cite[Thm 3.0.4]{dagX} and $\Theta_*$ is the composition of the functor constructed in \cite[ Const 4.8.3.24]{ha} with the forgetful functor $\algka\mo \algk$. Therefore $\Theta_*$ is given on objects by $\Theta_*(A)=(\lmod_A, A)$. By \cite[Thm 4.8.5.11]{ha}, $\Theta_*$ has a right adjoint which sends a pair $(\ccal, E)$ to the augmented $\eo$-algebra $k\oplus \uend_\ccal (E)$, with $\uend_\ccal (E)$ the $\eo$-algebra of endomorphisms of $E$ in $\ccal$. This implies that $\rindcohp$ has a right adjoint, which by \cite[Thm 5.2.8]{dagX} recalled in §\ref{ssobjdef}, is given on objects by $(\ccal, E)\mapsto\objdefa_{(\ccal, E)}$. 
\end{proof}

\begin{rema}\label{remadjobjdef}
Let $k$ be a field. Consider the restriction functor $\mrm{Rest}: \fmpeo\mo \fmpet$ which admits a left adjoint $\mrm{Ext}: \fmpet\mo \fmpeo$ given by the left Kan extension along the forgetful functor $\algetkart\mo \algkart$. By composition we obtain from Lemma \ref{propadj} that the symmetric monoidal functor 
$$\rindcohtep: \fmpette \mo ((\prlk)_{\modk/})^\te$$
admits as right adjoint the lax symmetric monoidal functor 
$$\objdefeta : ((\prlk)_{\modk/})^\te \mo \fmpette$$
where $\objdefeta_{(\ccal, E)}$ is the restriction of the formal $\eo$-moduli problem $\objdefa_{(\ccal, E)}$ on $\et$-algebras. Passing to algebra objects, we obtain a functor 
$$\alg(\rindcohtep) : \mon_{\eo}(\fmpet) \mo \alg(\prlk)$$
with right adjoint given by the functor $\objdefeta$ sending a pair $(\dal^\te, \unit)$ where $\dal^\te$ is a monoidal $k$-linear $\oo$-category and $\unit$ its unit object to the formal moduli problem $\objdefeta_{(\dal, \unit)}$ of deformations of $\unit$ in $\dal$. 
\end{rema}

\begin{cor}\label{corsgroupact}
Let $k$ be a field and let $\ccal$ be a $k$-linear $\oo$-category. Then the formal $\et$-moduli problem $\catdefa_\ccal$ is given by the formula
$$\catdefa_\ccal (A) \simeq \fun^\te_k (\rindcohte(\Omega \spf(A)), \End(\ccal)^\te)^\simeq .$$
In other words $\catdefa_\ccal(A)$ is naturally homotopy equivalent to the space 
$$\lmod_{\rindcohte(\Omega\spf(A))} (\prlk)^\simeq  \times_{(\prlk)^\simeq} \{\ccal\}$$
of left actions of the monoidal $\oo$-category $\rindcohte(\Omega\spf(A))$ on $\ccal$. 
\end{cor}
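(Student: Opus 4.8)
The plan is to start from the description of $\catdefa_\ccal$ supplied by Corollary \ref{corcatdefsactions} and to feed it into the adjunction between $\eo$-monoids in formal $\et$-moduli problems and monoidal $k$-linear $\oo$-categories recorded in Remark \ref{remadjobjdef}. By Corollary \ref{corcatdefsactions} we already have, for every artinian $\et$-algebra $A$, a natural equivalence
$$\catdefa_\ccal(A)\simeq \map_{\mongp_{\eo}(\fmpet)}(\Omega\spf(A), \auts_\ccal),$$
so it suffices to rewrite the right hand side. Both $\Omega\spf(A)$ and $\auts_\ccal=\Omega\catdefa_\ccal$ are loop objects, hence grouplike, so this mapping space coincides with the one computed in the larger $\oo$-category $\mon_{\eo}(\fmpet)$; this is exactly what will let me invoke the $\eo$-monoidal adjunction, which is stated on $\mon_{\eo}(\fmpet)$ rather than on its grouplike subcategory.

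The crucial step, and the one I expect to be the main obstacle, is to identify $\auts_\ccal$ with $\objdefeta(\End(\ccal)^\te)$ \emph{as an $\eo$-monoid object} of $\fmpet$, where $\End(\ccal)^\te\in\alg(\prlk)$ and $\objdefeta$ is the right adjoint of Remark \ref{remadjobjdef}. On underlying formal moduli problems this is already within reach: Remark \ref{remaautcequiv} gives an equivalence $\aut_\ccal\simeq\objdefet_{\id_\ccal}$, and applying $L$ together with Remark \ref{remaauts} yields $\auts_\ccal\simeq\objdefeta_{\id_\ccal}$, while by construction the underlying formal moduli problem of $\objdefeta(\End(\ccal)^\te)$ is precisely $\objdefeta_{(\End(\ccal),\id_\ccal)}=\objdefeta_{\id_\ccal}$, since $\id_\ccal$ is the unit of $\End(\ccal)^\te$. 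What must still be checked is that the two $\eo$-monoid structures agree: the loop structure on $\auts_\ccal=\Omega\catdefa_\ccal$ (composition of infinitesimal autoequivalences) and the structure on $\objdefeta(\End(\ccal)^\te)$ induced by the monoidal product of $\End(\ccal)^\te$. Morally these coincide because composition of autoequivalences \emph{is} the monoidal product on $\End(\ccal)$; to make this precise I would upgrade the equivalence of Remark \ref{remaautcequiv} to the level of $\eo$-monoids, realizing both sides through the right adjoint $\objdefeta$ on algebra objects applied to $\End(\ccal)^\te$ and tracking the compatibility across the construction of the loop functor and of $\objdefeta$.

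Granting this identification, the argument closes formally. Applying the adjunction $\alg(\rindcohtep)\dashv\objdefeta$ of Remark \ref{remadjobjdef} gives
$$\map_{\mon_{\eo}(\fmpet)}(\Omega\spf(A), \objdefeta(\End(\ccal)^\te))\simeq \map_{\alg(\prlk)}(\alg(\rindcohtep)(\Omega\spf(A)), \End(\ccal)^\te).$$
By the construction of $\alg(\rindcohtep)$, the monoidal $k$-linear $\oo$-category $\alg(\rindcohtep)(\Omega\spf(A))$ is $\rindcohte(\Omega\spf(A))$ equipped with the convolution product coming from the group structure of $\Omega\spf(A)$; thus the right hand side is $\fun^\te_k(\rindcohte(\Omega\spf(A)),\End(\ccal)^\te)^\simeq$, which is the first displayed formula of the statement. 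Finally, I would invoke the universal property of the endomorphism object $\End(\ccal)^\te$ recalled in §\ref{sslincat}, namely that a $k$-linear monoidal functor $\dal^\te\mo\End(\ccal)^\te$ is the same datum as a left $\dal^\te$-action on $\ccal$, to rewrite this space as $\lmod_{\rindcohte(\Omega\spf(A))}(\prlk)^\simeq\times_{(\prlk)^\simeq}\{\ccal\}$, giving the second formula. The only genuinely new input beyond bookkeeping is the $\eo$-monoid refinement of Remark \ref{remaautcequiv}; everything else is a chain of adjunctions and universal properties that have been set up earlier in the text.
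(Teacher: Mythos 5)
Your proposal is correct and follows essentially the same route as the paper's own proof: start from Corollary \ref{corcatdefsactions}, identify $\auts_\ccal\simeq\objdefeta_{\id_\ccal}$ via Remark \ref{remaautcequiv}, and apply the adjunction of Remark \ref{remadjobjdef}, then unwind via the universal property of $\End(\ccal)^\te$. The $\eo$-monoid compatibility you flag as the main obstacle is exactly the point the paper itself leaves implicit (Remark \ref{remaautcequiv} only asserts ``it can be shown''), so your write-up is, if anything, more explicit than the paper about where the genuine content lies, including the observation that fullness of $\mongp_{\eo}(\fmpet)\subseteq\mon_{\eo}(\fmpet)$ is what licenses applying the adjunction.
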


\begin{proof} 
By Corollary \ref{corcatdefsactions}, for $A$ an artinian $\et$-algebra, there is a natural homotopy equivalence 
$$\catdefa_\ccal (A)\simeq\map_{\mongp_{\eo}(\fmpet)} (\Omega \spf(A), \auts_\ccal).$$
Besides, there exists an equivalence of functors $\aut_\ccal\simeq \objdefet_{\id_\ccal}$ which induces a natural equivalence $\auts_\ccal\simeq \objdefeta_{\id_\ccal}$. Applying the adjunction of Remark \ref{remadjobjdef}, we obtain a natural homotopy equivalence 
$$\map_{\mongp_{\eo}(\fmpet)} (\Omega \spf(A), \auts_\ccal)\simeq \fun^\te_k (\rindcohte(\Omega \spf(A)), \End(\ccal)^\te)^\simeq$$
which proves our claim. 
\end{proof}


\vspace{2ex}
\bibliographystyle{amsalpha}
\begin{small}
\bibliography{/users/Anthony/Dropbox/MATH/Latex/Bibliographie/ref}

\providecommand{\bysame}{\leavevmode\hbox to3em{\hrulefill}\thinspace}
\providecommand{\MR}{\relax\ifhmode\unskip\space\fi MR }
\providecommand{\MRhref}[2]{%
  \href{http://www.ams.org/mathscinet-getitem?mr=#1}{#2}
}
\providecommand{\href}[2]{#2}
\begin{thebibliography}{Low08}

\bibitem[Coh13]{cohndgst}
L.~Cohn, \emph{Differential graded categories are k-linear stable infinity
  categories}, arXiv preprint arXiv:1308.2587 (2013).

\bibitem[Fra13]{francisen}
J.~Francis, \emph{The tangent complex and {H}ochschild cohomology of
  $\mathbb{E}_n$-rings}, Compositio Math \textbf{149} (2013), 430--480.

\bibitem[KL09]{kellow}
B.~Keller and W.~Lowen, \emph{On hochschild cohomology and morita
  deformations}, International Mathematics Research Notices \textbf{2009}
  (2009), no.~17, 3221--3235.

\bibitem[KS05]{kontsoibook}
M.~Kontsevich and Y.~Soibelman, \emph{Deformation theory, vol. 1}, Unpublished
  book draft. Available at www. math. ksu. edu/soibel/Book-vol1. ps (2005).

\bibitem[LB15]{lowvdbcurv}
W.~Lowen and M.~Van~den Bergh, \emph{The curvature problem for formal and
  infinitesimal deformations}, arXiv preprint arXiv:1505.03698 (2015).

\bibitem[Low08]{lowenhhcha}
W.~Lowen, \emph{Hochschild cohomology, the characteristic morphism and derived
  deformations}, Compos. Math \textbf{144} (2008), no.~6, 1557--1580.

\bibitem[Lura]{dagIX}
J.~Lurie, \emph{Derived {A}lgebraic {G}eometry {IX}: Closed {I}mmersions
  http://www. math. harvard. edu/~ lurie/papers}, DAG-IX. pdf.

\bibitem[Lurb]{dagX}
\bysame, \emph{Derived {A}lgebraic {G}eometry {X}: Formal {M}oduli {P}roblems
  http://www. math. harvard. edu/~ lurie/papers}, DAG-X. pdf.

\bibitem[Lurc]{dagXII}
\bysame, \emph{Derived {A}lgebraic {G}eometry {XII}: Proper {M}orphisms,
  {C}ompletions and the {G}rothendieck {E}xistence {T}heorem http://www. math.
  harvard. edu/~ lurie/papers}, DAG-XII. pdf.

\bibitem[Lurd]{ha}
\bysame, \emph{Higher algebra. 2014}, Book, available at http://www. math.
  harvard. edu/\~{} lurie.

\bibitem[Lur09]{htt}
\bysame, \emph{Higher topos theory}, no. 170, Princeton University Press, 2009.

\bibitem[Pet12]{petitdgaff}
F.~Petit, \emph{{DG} affinity of {DQ}-modules}, International Mathematics
  Research Notices \textbf{2012} (2012), no.~6, 1414--1438.

\bibitem[Pre12]{preygelth}
A.~Preygel, \emph{Thom-{S}ebastiani and duality for matrix factorizations, and
  results on the higher structures of the {H}ochschild invariants}, Ph.D.
  thesis, Massachusetts Institute of Technology, 2012.

\bibitem[Rob14]{marcoth}
M.~Robalo, \emph{Motivic {H}omotopy {T}heory of noncommutative {S}paces}, Ph.D.
  thesis, Université {M}ontpellier 2, 2014.

\end{thebibliography}
\end{small}
\vspace{5.mm}

  \begin{tabular}{l}
\small{\textsc{Anthony Blanc}} \\
   \hspace{.1in} \small{\textsc{Max Planck Institute for Mathematics, 53111 Bonn, Deutschland}}\\
   \hspace{.1in} \small{\textsc{Email}}: {\bf ablanc@mpim-bonn.mpg.de} \\
  \end{tabular}
  
  \vspace{2.mm}

  \begin{tabular}{l}
   \small{\textsc{Ludmil Katzarkov}} \\
   \hspace{.1in} \small{\textsc{Universit\"at Wien, Fakult\"at f\"ur Mathematik,  1090 Wien, \"Osterreich }}\\
   \hspace{.1in} \small{\textsc{Laboratory of Mirror Symmetry NRU HSE, Moscow, Russia}}\\
   \hspace{.1in} \small{\textsc{Email}}: {\bf lkatzarkov@gmail.com} \\
  \end{tabular}

\vspace{2.mm}

  \begin{tabular}{l}
\small{\textsc{Pranav Pandit}} \\
   \hspace{.1in} \small{\textsc{Universit\"at Wien, Fakult\"at f\"ur Mathematik,  1090 Wien, \"Osterreich}}\\
   \hspace{.1in} \small{\textsc{Email}}: {\bf pranav.pandit@univie.ac.at} \\
  \end{tabular}

\end{document}